\documentclass[12pt]{amsart} 

\usepackage{graphicx,amssymb,colordvi,textcomp,latexsym} 

\usepackage{tikz}
\usepackage{ulem} %to use \sout{}

\usepackage[title]{appendix}
\usepackage[showframe=false]{geometry}
\usepackage{changepage}

\usepackage{adjustbox}
\usetikzlibrary{arrows,shapes, positioning, matrix, patterns, decorations.pathmorphing, arrows.meta,automata}

\usepackage{color}
\usepackage{colortbl}
\usepackage{soul}

\usepackage{array}
\usepackage[hidelinks]{hyperref}
\usepackage{amsfonts}
\usepackage{amsmath}
\usepackage{amsthm}

\usepackage{todonotes}
\usepackage{enumitem}   

\newcommand{\ignore}[1]{}

\setcounter{secnumdepth}{2}

\newcommand{\R}{\mbox{$\mathbb{R}$}}

\newcommand{\ZZ}{\mathbb{Z}}

\newcommand{\TT}{\mathcal{T}}

\newcommand{\beqn}{\begin{eqnarray*}}
\newcommand{\eeqn}{\end{eqnarray*}}

\newcommand{\id}{\mathrm{Id}}

\newtheorem{lemma}{Lemma}[section]

\newtheorem{prop}[lemma]{Proposition}

\newtheorem{cor}[lemma]{Corollary}
\theoremstyle{definition}
\newtheorem{Def}[lemma]{Definition}
\newtheorem{exam}[lemma]{Example}

\theoremstyle{remark}
\newtheorem{rem}[lemma]{Remark}
\newtheorem{rems}[lemma]{Remarks}

\newcommand{\sot}{\mbox{\footnotesize $\frac{1}{3}$}}

\title[Excitatory-Inhibitory Networks]{Classification of 2-node Excitatory-Inhibitory Networks}
\author{Manuela Aguiar}
\address{Manuela Aguiar, Centro de Matem\'atica da Universidade do Porto (CMUP), Faculdade de Ci\^encias, Universidade do Porto, Rua do Campo Alegre s/n, 4169-007 Porto, Portugal\newline 
Faculdade de Economia, Universidade do Porto, Rua Dr Roberto Frias, 4200-464 Porto, Portugal}
\email{maguiar@fep.up.pt}
\author{Ana Dias}
\address{Ana Dias, Centro de Matem\'atica da Universidade do Porto (CMUP), Departamento de Matem\'atica, Faculdade de Ci\^encias, 
Universidade do Porto, Rua do Campo Alegre s/n, 4169-007 Porto, Portugal}
\email{apdias@fc.up.pt}
\author{Ian Stewart}
\address{Ian Stewart, Mathematics Institute, University of Warwick, Coventry CV4 7AL, United Kingdom}
\email{i.n.stewart@warwick.ac.uk}

\keywords{
excitatory-inhibitory network, excitatory and inhibitory connections, ODE-equivalence
}
\subjclass[2020]{
Primary: 92C42, 37N25, 37C20; Secondary: 92B20
} 
 %%%%%%%%%%%%%%%%%%%%%%%%%%%%%%%%%%%%%%%%%%%%%%%%%%%%%%%%%%
%92 Biology and other natural sciences
%92C Physiological, cellular and medical topics
%92C42   	Systems biology, networks
%%%%%%%%%%%%%%%%%%%%%%%%%%%%%%%%%%%%%%%%%%%%%%%%%%%%%%%%%%
%37 Dynamical systems and ergodic theory
%37N Applications of dynamical systems
%37N25   	Dynamical systems in biology
%%%%%%%%%%%%%%%%%%%%%%%%%%%%%%%%%%%%%%%%%%%%%%%%%%%%%%%%%%
%37 Dynamical systems and ergodic theory 
%37C Smooth dynamical systems: general theory 
%37C20   	Generic properties, structural stability of dynamical systems
%%%%%%%%%%%%%%%%%%%%%%%%%%%%%%%%%%%%%%%%%%%%%%%%%%%%%%%%%%
%92 Biology and other natural sciences
%92B Mathematical biology in general
%92B20   	Neural networks for/in biological studies, artificial life and related topics

\date{\today}

\begin{document}

\allowdisplaybreaks

\begin{abstract} 
We classify connected 2-node excitatory-inhibitory networks under various conditions. 
We assume that, as well as for connections, there are two distinct node-types, excitatory and  inhibitory. 
In our classification we consider  four different types of excitatory-inhibitory networks: restricted, partially 
restricted, unrestricted and completely unrestricted. For each type we give two different classifications. Using 
results on ODE-equivalence and minimality, we classify the ODE-classes and present a minimal representative 
for each ODE-class. We also classify all the networks with valence $\le 2$. These classifications are up to 
renumbering of nodes and the interchange of `excitatory' and `inhibitory' on nodes and arrows.These 
classifications constitute a first step towards analysing dynamics and bifurcations
of excitatory-inhibitory networks. The results have potential applications to biological network models, 
especially neuronal networks, gene regulatory networks, and synthetic gene networks.
\end{abstract}

\maketitle

\section{Introduction}

In many biological networks there is a key distinction between {\it excitatory} and {\it inhibitory} connections. These terms are common in neuroscience; 
 in genetics  the corresponding connections are usually called {\it activators} and {\it repressors}. For simplicity we adopt the excitatory/inhibitory
terminology in this paper.
 
A connection from node $i$ to node $j$ is excitatory when activation of node 
$i$ makes node $j$ more likely to become active. 
A connection is inhibitory if activation of node $i$ makes node $j$ less likely to become active.
The precise level of activity is determined by the detailed dynamics 
of the nodes and arrows; in particular, by the strength of the connection. 
Examples where this distinction is of central importance are networks of neurons, such as the 
connectome of an organism (Hagmann \cite{H05}, Sporns {\it et al.} \cite{STK05})
 and gene regulatory networks (GRNs), Liu {\it et al.} \cite{LLC19}.

A mathematical framework for understanding collective dynamics on coupled networks, such as synchronization and synchrony-breaking or synchrony-preserving bifurcations,
is supplied by the  theory of {\it (coupled cell) networks} and corresponding  {\it network dynamical systems} or {\it admissible ODEs}, see Golubitsky {\it et al.}~\cite{SGP03, GST05, GS06,GS23} and Field~\cite{F04}. 
In this formalism, nodes and connections (`arrows') are assigned `types', 
which constrain the class of admissible ODEs. For classification
purposes we distinguish excitatory nodes from inhibitory
ones by assigning them different types, but make no other assumptions.
See Section \ref{S:EandI}. The aim of this paper is to classify 2-node networks
 with two different types of connection, under a variety of extra conditions,
summarized in Table \ref{T:classifications}.  A companion paper \cite{ADS3node} builds on
these results to extend the classifications to 3-node excitatory-inhibitory networks.

In neuronal networks, as a general (though not universal) rule, the type of connection is determined by its tail node. 
In other words, nodes (as well as connections) are of two types, {\it excitatory} and {\it inhibitory}. Excitatory nodes output excitatory signals and inhibitory nodes output inhibitory signals. 
This condition does not apply to GRNs: a single node can have both excitatory and inhibitory outputs. 

\subsection{Motivation from Previous Work}

Mathematical models of GRNs and results on robust synchronization, based on the existing theoretical results in the above network formalism, have been obtained by Aguiar, Dias and Ruan~\cite{ADH22}. A related theory of homeostasis in network dynamics
has been developed by Golubitsky {\it et al.} \cite{GS16b, GS18, GSAHW20, GW19} and applied to GRNs by Antoneli {\it et al.} \cite{AGS18a}. Synchrony-breaking bifurcations for
six small, basic genetic circuits are studied in Makse {\it et al.} \cite{MGMRS23}. 

Even when the full network is large and complex, many subnetworks have been
identified that enable specific, useful functions within that network. 
These subnetworks, often called {\it motifs},
are small subnetworks with a topology that occurs with significantly higher 
frequency than in randomized networks, Milo~{\it et al.}~\cite{MSIKCA02}.
Tyson and Nov\'ak~\cite{TN10} have classified 2- and 3-node motifs in biochemical reaction networks. They analyze the dynamics of these motifs and provide evidence that they can carry out specific functions. Singhania and Tyson~\cite{ST23} point out that 
there are many examples of 
{\it near-perfect adaptive responses}  in the physiology of living cells,  which corresponds to the  transiently dynamics response of a system  to a change in an environmental signal and then returning near perfectly to its pre-signal state, even in the continued presence of the signal. Using an evolutionary search procedure, \cite{ST23}  
addresses the underlying molecular bases of such behavior. More precisely, 
 it examines a wide class of molecular interaction  $3$-node motif networks for their potential 
  to exhibit near-perfect adaptation.

Leifer {\it et al.}~\cite{LMRASM20} and Morone {\it et al.}~\cite{MLM20} argue that 
in order to be a functional biological building block, a small subnetwork
should not just occur unusually often, but it should offer computational repertoires
analogous to electronic circuits. This idea has
been explored in synthetic biology, showing that small engineered
GRNs can perform logical computations, Dalchau {\it et al.}~\cite{DSHBCP18}.
An example is the toggle switch, which can be made to switch between 
two coexisting stable states by providing suitable inputs~\cite{ASMN03,GCC00,KF05,KVBAWF04}.

Another example occurs in Elowitz and Leibler~\cite{EL00},
who assembled a synthetic oscillatory network
in {\it Escherichia coli}
from three transcriptional repressor systems, naming it the {\it repressilator}.
They observed periodic oscillations with a period of several hours. 
For simplicity, they first analyze an idealized model in which
 all three repressors have the same
dynamics, so the equations are symmetric under the cyclic group $\mathbb{Z}_3$
of order $3$.
Simulations of oscillatory motion produced a discrete rotating wave
with successive $\sot$-period phase shifts. Using methods of network dynamics,
this state is typical of $\mathbb{Z}_3$-symmetric systems, and often occurs
via symmetry-breaking Hopf bifurcation, Golubitsky and Stewart \cite{GS23}.
A stochastic version in~\cite{EL00} produced oscillations\index{oscillation} of different 
and variable amplitudes, but with approximately the same $\sot$-period phase shifts. 

Several other standard synthetic genetic oscillators,
 whose structure is that of
a small network, are surveyed in Purcell {\it et al.}~\cite{PSGB10}.
They include:

(i) The {\it Goodwin oscillator}, Goodwin~\cite{Goodwin63}, which comprises
 a single gene that represses itself.

(ii) {\it Amplified negative feedback oscillators}
 have been studied by
several authors, using different mathematical models.
Guantes and Poyatos~\cite{GP06} explain oscillation as 
a saddle-node bifurcation on an invariant
circle (SNIC). 
Conrad {\it et al.}~\cite{CMNF08} obtain oscillations from
a subcritical Hopf bifurcation.
Atkinson {\it et al.}~\cite{ASMN03} also use Hopf bifurcation.

(iii) The {\it Fussenegger oscillator}
 goes back to Tigges {\it et al.}~\cite{TMSF09}.
It comprises two genes, with both
sense and antisense transcription occurring from one of
them. This creates a delay in the feedback loop, enhancing the ability to oscillate

(iv) The {\it Smolen oscillator},
 Smolen {\it et al.}~\cite{SBB98}, comprises
two genes. One promotes its own transcription and that of the other
gene, while the second  represses its
own transcription and that of the first gene.
Oscillations were first demonstrated mathematically in Hasty
{\it et al.}~\cite{HDRC02} using a simple model, and shown to arise from either a
supercritical or subcritical Hopf bifurcation.

(v) In a {\it variable link oscillator}
one gene regulates itself, and also regulates a second
gene through a variable promoter. The second gene causes repression
via a protease acting on the product of the first gene.
An ODE model is studied in Hasty
{\it et al.}~\cite{HIDMC01}. 

(vi) The {\it metabolator},
 Fung {\it et al.}~\cite{FWSBLL05}, is the first biological oscillator
reported in the literature using metabolites as a core component. 

\subsection{Motifs in {\it Escherichia coli} GRN}

As further motivation,
Figure \ref{F:EI_E_coli} shows eight 3-node motifs from the
gene regulatory network of  {\it Escherichia\ coli}, an organism whose
 genetic regulatory network, compiled by RegulonDB,
  has been characterised in considerable detail
 \cite{GG16}.
The main point of the figure
is to illustrate the presence of nontrivial 3-node motifs in real 
biological networks, but they also illustrate features of the mathematical
classification developed in this paper.
We discuss each motif briefly.

\begin{figure}[h!]
\centerline{
\includegraphics[width=0.9\textwidth]{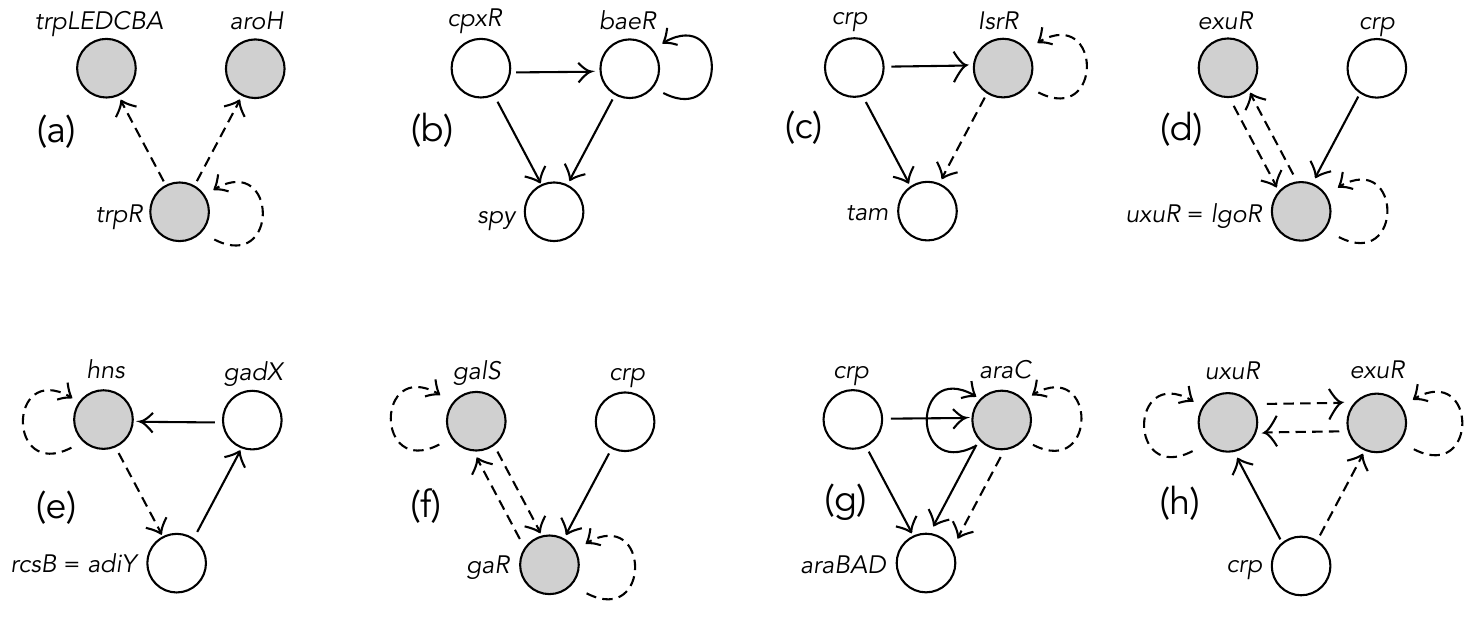}
}
\caption{Eight 3-node motifs realised in {\it E. coli}.}
\label{F:EI_E_coli}
\end{figure}

(a) Autoregulation loop involved in biosynthesis of tryptophan, regulated
by {\it trpR} \cite{GG87}, which represses itself, the gene {\it aroH}, and the 
{\it trpLEDCBA} operon, 
which codes for the enzymes of the 
tryptophan biosynthesis pathway. From \cite{MSDS23}.

(b) Example of a SAT-Feed-Forward-Fiber network.
From \cite{LMRASM20} Fig.1 E.

(c) Example of an UNSAT-Feed-Forward-Fiber network.
From \cite{LMRASM20} Fig.2 F.

(d) Example of a 2-FF network showing quotient by synchrony
of genes {\it uxuR} and {\it IgoR} in a 4-node network in {\it E.coli.}
From \cite{MLM20} Fig. 3B.

(e) Example of a 3-FF network showing quotient by synchrony
of genes {\it rcsB} and {\it adiY} in a 4-node network in {\it E.coli.}
From \cite{MLM20} Fig. 3B.

(f)  Example of a a network where a node feeds 
forward into one node of a toggle-switch. From \cite{M20}.

(g) In the sugar utilisation transcriptional system [24], the arabinose metabolism [25] 
involves the regulation of the {\it araBAD} operon (composed of
genes {\it araB}, {\it araA}, and {\it araD}) by two transcription factors
{\it AraC} and {\it CRP} expressed by genes {\it araC} and {\it crp}, respectively.
From \cite{MMpre}.

(h) Example of a network where a node feeds 
forward into both nodes of a toggle-switch. From \cite{MMpre}. 
\vspace{5pt}

Networks (a) and (b) 
 have arrows (and nodes) of a single type.
Networks (c),(d),(e), and (f) are
what we call REI networks in Definition \ref{def:EIN} below; 
that is, a given node outputs only one type of arrow. 
Networks (g) and (h) are UEI networks: some node outputs arrows of both types.

\subsection{Classification}
Given this degree of interest in small GRNs and their dynamics and bifucations, it is
 of interest to formalize the structure of excitatory-inhibitory (EI) networks
 and to investigate small examples systematically. 
We aim to classify small EI networks under certain conditions. In this paper we classify 2-node EI networks. 
These results are extended to the 3-node case in \cite{ADS3node}.
These classifications can be viewed as a preparatory step towards a
systematic analysis of dynamics and bifurcations in such networks.

We work in the network formalism of~\cite{SGP03, GST05, GS23}, in which nodes (previously called cells) and arrows (connections, directed edges) are partitioned into one or more types. The 
dynamics of the network respects both its topology and the distinction between different types of node or arrow. Such systems of ordinary differential 
equations (ODEs) are said to be {\it admissible} for the network.

The classification of regular
3-node networks, that is,  with only one node and arrow type and where
every node receives one or two arrows,
can be found in \cite{LG06}, along with the classification of all codimension one steady-state and Hopf bifurcations from a synchronous equilibria in these networks.

Our classifications are modelled on that analysis. However, \cite{LG06}
also considers the dynamics
and bifurcations, topics for future work on EI networks. 
Similar classifications have been carried out for regular $4$-node networks by Kamei~\cite{K09},  
$3$-node fully inhomogeneous networks \cite[Section 4.4]{GS23}
 and networks with asymmetric inputs by 
Aguiar, Dias and Soares~\cite{ADS21}. 
Nevertheless, the special structure of excitatory-inhibitory networks
has not previously been addressed in a systematic manner. 

\subsection{Remarks on Excitation and Inhibition}
\label{S:EandI}

The general formalism of \cite{SGP03,GST05,GS23} does not assign a
meaning to the terms `excitatory' or `inhibitory' when applied
to a specific arrow in a network diagram. It does, however, distinguish different
{\it types} of connections (arrows), which is sufficient for classification purposes.
One reason for this approach is that the formalism is designed to apply to 
all ODE models with network structure,
in many of which notions of excitation and inhibition are not relevant. Another is that
couplings are assumed to be determined by general nonlinear functions, 
whereas excitation/inhibition is most natural for linear couplings, see below.

However, the correspondence with biological notions of excitation and inhibition
appears in the general formalism when we introduce an important feature: the
{\it dynamics} of an admissible ODE. It is then
possible to define excitation/inhibition relative to a specific {\it solution} of the ODE,
such as an equilibrium or periodic orbit. This definition usually agrees with 
the terms as used in standard biological models.

In general, an admissible ODE assigns a variable $x_i$ to each node $i$,
representing its dynamic state, and takes the form
\[
\dot x_i = f_i(x_i;x_{i_1}, \ldots, x_{i_m}),
\]
where $i$ indexes the nodes and $i_1, \ldots, i_m$ run through the tail nodes
of all input arrows to node $i$. Moreover, if nodes $i,j$ have the  same node-type and the
same number of input arrows of each arrow-type, we require $f_i=f_j$.

\begin{Def}
\label{D:E/I_def}
Assume for simplicity that $x^*$ is an equilibrium.
Then the $k$th input arrow to node $i$ is:
\begin{equation}
\label{E:ex/inhib_def}
\begin{array}{rcl}
\mbox{{\it excitatory} at}\ x^* & \mbox{if} & \partial_k f_i |_{x^*} > 0, \\
\mbox{{\it inhibitory} at}\ x^* & \mbox{if} & \partial_k f_i |_{x^*} < 0.
\end{array}
\end{equation}
Here $\partial_k f_i$ is the partial derivative of $f_i$ with respect to
the $k$th variable, excluding the first variable $x_i$ which represents
the state of the node concerned. (We do not write these in the more familiar form
$\frac{\partial f_i}{\partial x_{i_k}}$ because the same variable $x_{i_k}$
may appear for different values of $k$, in which case that notation is ambiguous.)
\hfill $\Diamond$
\end{Def}

In many common models, couplings are linear, so that 
\begin{equation}
\label{E:lin_coupling}
\dot x_i = f_i(x_i)+ \alpha_1 x_{i_1}+ \cdots +  \alpha_m x_{i_m}.
\end{equation}
with only $f_i$ nonlinear. Models of this kind go back at least to
Kuramoto \cite{K75,K84}.
Here the coefficients $\alpha_k$ are often called {\it connection strengths} or {\it weights}.
Now arrow $k$ is excitatory (for any solution) if $\alpha_k>0$, and
inhibitory if $\alpha_k<0$. Therefore \eqref{E:ex/inhib_def} is in agreement with standard terminology for such models.

These considerations do not affect the classifications in this paper, or the lists of
admissible ODEs. However, they can be vital when considering dynamics and
bifurcations.

\subsection{Summary of Paper and Main Results}

In this paper we define four different types of  EI networks:
REI, PEI, UEI, and CEI, see Definition~\ref{def:EIN}. These definitions
form the basis of this paper and the companion \cite{ADS3node}.
In both papers we classify networks of these types under various assumptions. 
We also derive subsidiary
classifications under the stronger relation of ODE-equivalence, where
two networks are ODE-equivalent if they have the same space
of admissible ODEs. 
To organise and summarise these results,
Table \ref{T:classifications} lists the main classifications obtained in this paper,
with columns for the bounds on the valence, type of network, number of networks in
the classification, plus
references to associated Figures, Theorems, and lists of 
admissible ODEs.
Even in the 2-node case, we find a rich variety of networks. The 3-node case,
which is more complicated, makes use of the 2-node results.

\begin{table}[!htb]
\begin{center}
{\tiny 
\begin{tabular}{|l|c|c|c|c|}
\hline
 network& number of &figure & theorem & admissible \\
 type & networks & & & ODEs\\
\hline
\hline
 REI & $\infty$ & Figure \ref{fig:generalREI} (left) & Proposition \ref{prop_general_val_2REI} & \eqref{eq:general2REI} \\
\hline
REI  (ODE) & 2 & Figure \ref{fig:ODE_general_REIV} & Proposition \ref{prop:2_node_ODE} & Table \ref{table:ODE_general_REIV} \\
 \hline
REI val $\leq 2$ & 15 & Figure \ref{fig:2NCNREIV2} & Proposition \ref{prop:REI2} & Table \ref{table:admissible_2NCNREIV2} \\
  & \tiny{2 ODE-classes}  &  Figure~\ref{fig:ODE_general_REIV}  &  & Table~\ref{table:ODE_general_REIV} \\
  \hline
  \hline
PEI & $\infty$ & Figure~\ref{fig:generalPEI} & Proposition~\ref{prop_general_val_2REI}  & \eqref{eq:general2REI}   \\
\hline
PEI (ODE) & $\infty$ & Figure~\ref{fig:ODE_general_PEI}   & Proposition \ref{prop:PEI_ODE} &Table~\ref{table:ODE_general_PEI}\\
 \hline
PEI val $\leq 2$ & 15 &    Figure \ref{fig:2NCNREIV2} nodes same type & Proposition \ref{prop:PEI2} & Table \ref{table:admissible_2NCNREIV2}  \\
   &  \tiny{9 ODE-classes} & Figure \ref{fig:ODE_2NCNPEIV2} & &Table \ref{table:ODE_2NCNPEIV2} \\
  \hline
  \hline
UEI (ODE) & $\infty$ & Figure \ref{fig:ODE_general_UEIV} & Proposition \ref{prop:UEI_ODE_classes} & Table \ref{table:ODE_general_UEIV} \\
 \hline
UEI (ODE) val $\leq 2$ & 4 & Figure \ref{fig:ODE_2NCNUEIV2} & Proposition~\ref{L:3.8} & Table \ref{table:ODE_2NCNUEIV2} \\
    & & & &\\
  \hline
 UEI val $\leq 2$ & 53 & Figures \ref{fig:2NCNREIV2} and \ref{fig:2NCNUEIV2} & Proposition \ref{prop:UEI2}  & 
 Tables~\ref{table:admissible_2NCNREIV2} and \ref{table:UEI+ODE}  \\
    &    4 ODE-classes  & Figure \ref{fig:ODE_2NCNUEIV2} & &Table \ref{table:ODE_2NCNUEIV2}\\
 \hline
 \hline
CEI & $\infty$ & Figure~\ref{fig:generalCEI} & Proposition~\ref{lem:general_2_node_CEI} & (\ref{eq:general2UEI})  \\
 \hline
CEI (ODE) &  $\infty$  & --- & Proposition \ref{prop:general_CEI2} & --- \\
  \hline
 CEI val $\leq 2$ & 53 &  Figures \ref{fig:2NCNREIV2} and \ref{fig:2NCNUEIV2} nodes same type    & Proposition \ref{prop:CEI2} & 
 Tables \ref{table:admissible_2NCNREIV2}  and \ref{table:UEI+ODE}  \\
    & \tiny{21 ODE-classes} &  Figures \ref{fig:ODE_2NCNPEIV2} and \ref{fig:ODE_2NCNCEIV2} & & Tables~\ref{table:ODE_2NCNPEIV2} and \ref{table:ODE_2NCNCEIV2} \\
  \hline
\end{tabular}
}
\vspace{5mm}
\caption{List of classifications of connected 2-node EI networks and their locations. (ODE): ODE-equivalence classes. val: valence.}
\label{T:classifications}
\end{center}
\end{table}

Section \ref{S:CEIN} discusses classes of excitatory-inhibitory (EI) networks from
the point of view of the general network formalism of \cite{SGP03,GST05,GS23}.
In this formalism the two arrow-types are different, but the $\pm$ nature of
excitation/inhibition is defined relative to a given dynamical state by \eqref{D:E/I_def}. We
distinguish excitatory nodes from inhibitory ones, giving two distinct node-types.
It is also possible to identify these node-types subject to conditions on their
output arrows. Doing so creates extra synchrony patterns as in 
\cite{LMRASM20,MGMRS23,MLM20}, but we do not discuss these patterns here.
Subsection \ref{S:FD} gives formal definitions of four 
types of excitatory-inhibitory networks, `restricted' (REI), `partially restricted' (PEI), `unrestricted' (UEI), and 
 `completely unrestricted' (CEI).
Subsection \ref{S:AODE} defines the class of admissible ODEs associated with an
EI network, using the Smolen oscillator as a simple example. Adjacency matrices
are also discussed. Subsection \ref{S:BCQN} defines balanced colourings (also called
fibration symmetries) and
the associated quotient networks.

Section \ref{S:CC2EIN} classifies 2-node EI networks under various conditions.
Corresponding admissible ODEs are listed. 
Subsections~ \ref{S:C2REIN}-\ref{S:C2REINV2} are dedicated to the classification of connected 2-node REI networks. In Subsection~\ref{S:C2REINODE}, the classification is done up to ODE-equivalence and in Subsection~\ref{S:C2REINV2} for REI networks of valence $\leq 2$. 
Subsections \ref{S:C2PEI}-\ref{S:C2PEINV2} consider the classification of connected  2-node PEI networks. In Subsection \ref{S:C2PEINODE}, the classification is done up to ODE-equivalence and in Subsection \ref{S:C2PEINV2} is for  2-node PEI networks of valence $\leq 2$.
Subsections \ref{S:C2UEI}-\ref{S:C2UEIV2}  address the classification of connected 2-node UEI networks, where 
Subsection \ref{S:C2UEIODE} classifies  up to ODE-equivalence and 
Subsection \ref{S:C2UEIV2} lists the 2-node UEI networks of valence $\leq 2$.
Finally, Subsections \ref{S:C2CEI}-\ref{S:C2CEINV2} classify connected 2-node CEI networks, where the classification in 
Subsection \ref{S:C2CEINODE} is done up to ODE-equivalence and the classification in 
Subsection \ref{S:C2CEINV2}  is of the networks of valence $\leq 2$.

\section{Classes of Excitatory-Inhibitory Networks}
\label{S:CEIN}

We use the network formalism of \cite{SGP03,GST05}, modified as in \cite{GS23}
 to remove the condition that 
arrows of the same type have heads of the same type and tails of the same type. Instead, we separate the roles of node equivalence 
(formerly cell equivalence) and state equivalence (a consequence of the constraints on admissible maps and ODEs). Nodes are state-equivalent if they have the same state space
in a canonical manner. Intuitively, they are node equivalent 
 if they have the same `internal dynamic'. This change 
broadens the range of networks without affecting the main theorems or their proofs. It is also more natural for the networks considered in this paper.

\begin{rems}
\label{r:CEINrems}
(i) Technically, the node-type can be considered as an arrow-type for a distinguished
`internal arrow'. It constrains the admissible ODEs {\it only} when two nodes $c,d$ of the
same node-type are input equivalent. Otherwise, equal node types have no 
dynamic implications.

(ii) This convention differs considerably from that of many models, where each 
node or arrow contributes a specific {\it term} to the model ODE. The reasoning behind
it is explained in \cite[Sections 9.5, 9.8]{GS23}. A key point is that nodes can synchronize robustly
only when they have the same node type {\it and} have isomorphic input sets.
If the input sets of nodes $c,d$ are not isomorphic, assigning them the same node-type
is redundant and does not constrain the admissible ODEs. Node-types can be
redefined to remove these redundancies.

(iii) In many areas of science, the zero state $x_i = 0$ has a special significance. 
For example, in a biochemical or gene regulatory network, the concentration of
a molecule is always $\geq 0$, and a concentration of 0 implies its absence.
In a neuronal network, a voltage of 0 also has a clear physical meaning. 
In many applications it is assumed that the model ODE has the form 
$\dot x = F(x)$ where $F(0)= 0$, implying the existence of  a fully synchronized
state in which $x = (0,0, \ldots, 0)$. This state exists even when nodes are not
input isomorphic.

(iv)
However, in other areas of application there is nothing special about the value 0.
The general formalism therefore assigns no special significance to the zero state,
and a fully synchronized state may not exist unless the network is homogeneous.

(v) Similar remarks apply to weighted networks, and to network models
of the form \eqref{E:lin_coupling}.
\end{rems}

As remarked in Section \ref{S:EandI}, on a formal level the distinction 
between excitatory arrows and
inhibitory ones reduces to having two different arrow-types $A^E,A^I$. We use {\it only} two
arrow-types, which corresponds to a standard simplified modelling assumption:
all excitatory arrows are identical and all inhibitory arrows
are identical. Without this assumption the lists of networks become much larger.

Node-types pose an additional problem. In some areas of biology, notably
neuroscience, a given node cannot output both an excitatory arrow and an
inhibitory one. In effect, there are two distinct node-types $N^E,N^I$. This assumption leads to
the class of {\it restricted} EI-networks (REI). 

However, this restriction is not universal; for example it often fails in GRNs. Keeping two
node-types but removing the restriction on outputs gives the class of
{\it unrestricted} EI-networks (UEI). 

\begin{rem}
\label{R:balance}
Other classes of EI networks  are also of interest, in particular in connection
with a feature of network dynamics that is important in both theory and applications:
synchrony. Two nodes are {\it synchronous} if they have identical time series for
some solution of the model equations. Here we mention synchrony only in passing, but
some discussion is in order because of its importance. A synchronous state
is {\it robust} if it is determined by a subspace that is invariant under
all admissible maps. Every
robustly synchronous state corresponds to a {\it balanced
colouring} of the nodes, see Section~\ref{S:BCQN}. 
This determines a {\it synchrony pattern} whose
{\it synchrony subspace} is flow-invariant for all admissible ODEs.
In consequence, nodes with different
node-types cannot synchronise robustly.

In this paper we assume that there are two distinct node-types:
$N^E$ (excitatory) and $N^I$ (inhibitory). Since robustly synchronous
nodes must be input-isomorphic, an
$N^E$ node cannot synchronise robustly with an $N^I$ node. This constraint reduces
the range of possible synchrony patterns.
From this viewpoint, REI and UEI networks are never homogeneous (all nodes cannot synchronize simultaneously  and robustly).
For example,  the Smolen network of Figure \ref{fig:Smolen} cannot have a 
nontrivial synchrony pattern. (In the trivial pattern, each node has a different colour.
The trivial colouring is obviously balanced.)
 Another
term for the same idea, discovered independently in a different context,
is {\it fibration symmetry}; see Boldi and Vigna \cite{BV02} and DeVille
and Lerman \cite{DL14}. 

In contrast, the networks studied in \cite{LMRASM20,MGMRS23,MLM20} 
can have more synchrony patterns and, in particular, a total synchrony pattern where all nodes synchronize simultaneously and robustly.
This happens because, in effect, the two node-types
$N^E,N^I$ are considered to be the same. This assumption is reasonable in neuronal
networks, where {\it activation} of a neuron either excites or inhibits
another neuron. This suggests that the difference lies in the outputs, 
or in how a node receiving the output responds to that signal, but
not in the internal dynamics of the neuron. With this assumption, the Smolen network
can synchronize robustly.

In the REI case, 
consider the modification where the two node-types are identified but
the restriction that no node outputs arrows of both types
$A^E,A^I$ is retained. We call these {\it partially restricted} EI-networks (PEI).
The same modification can be done in the UEI case, and amounts to giving
all nodes the same type without arrow-type restrictions.
We call these {\it completely unrestricted} EI-networks (CEI).
\hfill $\Diamond$\end{rem}

For PEI and CEI networks all nodes have the same node-type, which implies a change to 
the admissible ODEs by imposing further constraints on
the component functions, and so extra synchrony patterns can arise.

\subsection{Formal Definitions} 
\label{S:FD}

We define four classes of {\it excitatory-inhibitory networks.}

\begin{Def} \label{def:EIN} 

Consider the following four conditions on a network ${\mathcal G}$:

(a) There are two distinct node-types, $N^E$ and $N^I$.

(b) There are two distinct arrow-types, $A^E$ and $A^I$.

(c) If $e \in A^E$ then $\TT (e) \in N^E$.

(d) If $e \in A^I$ then $\TT (e) \in  N^I$, 

\noindent
where $\TT(e)$ indicates the tail node of arrow $e$. 

Then:

${\mathcal G}$ is a {\it restricted excitatory-inhibitory network} ({\it REI network}) if it
satisfies conditions (a), (b), (c) and (d). 

${\mathcal G}$ is an {\it unrestricted excitatory-inhibit\-ory network} ({\it UEI network})
if it satisfies conditions (a) and (b). 

${\mathcal G}$ is a {\it partially restricted excitatory-inhibitory network} ({\it PEI network}) 
if the node-types $N^E$ and $N^I$ are identified (so (a) fails to apply)
and it satisfies condition (b). Conditions (c) and (d) fail to apply, but each node outputs only one type of arrow.

${\mathcal G}$ is a {\it completely unrestricted excitatory-inhibitory network} ({\it CEI network}) 
if the node-types $N^E$ and $N^I$ are identified (so (a) fails to apply) and it satisfies condition (b). 
Conditions (c) and (d) fail to apply and 
nodes can 
output the two types of arrow.
\hfill $\Diamond$
\end{Def}

\begin{rems}
\label{R:2.3.1}

(i) REI networks are, in particular, UEI networks and PEI networks are, in particular, CEI networks.

(ii) A PEI network has only one node-type and each node outputs arrows of only one type,
but there are still two distinct arrow-types.
 
(ii) CEI networks employ the convention of \cite{LMRASM20,MLM20}, in which
there is one node-type but two distinct arrow-types. 
Each node can output arrows of either arrow-type. This modification
seems inappropriate for neuronal networks, where excitatory neurons have
different internal dynamics from inhibitory ones and networks are REI, but 
it can be appropriate for GRNs.
\end{rems}

\vspace{5pt}
\paragraph{\bf Conventions}

The following conventions are used throughout the paper without further mention,
except as an occasional reminder for clarity. 

(a)	We represent type $N^E$ nodes by white circles and type $N^I$ nodes by grey circles. Type $A^E$ arrows are solid and type
$A^I$ arrows are dashed. (Various other conventions for excitatory/inhibitory arrows
are found in the literature; this one is chosen for convenience.) 
 
 (b) All classifications are stated up to {\it renumbering} of nodes and {\it duality}; that is,
 interchange of `excitatory' and `inhibitory' on nodes and arrows: 
 $N^E \leftrightarrow N^I$ and $A^E \leftrightarrow A^I$.
 \hfill $\Diamond$
 
\begin{Def} \label{Def:input_equiv}
(a) Given a node $i$, denote the set of excitatory arrows directed to $i$ by $I^E(i)$ and the set of inhibitory arrows directed to $i$ by $I^I(i)$. We call $I^E(i)$ and $I^I(i)$ the {\it excitatory} and {\it inhibitory input sets} of $i$, respectively. The {\it input set} of $i$ is then $I(i) = I^E(i) \cup I^I(i)$. The {\it valence (degree, in-degree)} of $i$ is the cardinality $\#I(i)$ of $I(i)$. 

(b) Two nodes $i$ and $j$ are {\it input equivalent} if they have the same node-type and
 there is an arrow-type preserving bijection between the corresponding input sets $I(i)$ and $I(j)$; that is, when $\#I^E(i) = \#I^E(j)$ and $\#I^I(i) = \#I^I(j)$. We write $i \sim_I j$.
 The relation $\sim_I$ is an equivalence relation, which partitions the set of nodes into
 disjoint {\it input classes}. 

(c) A network in which all nodes are input equivalent is {\it homogeneous}. Otherwise, it is
 {\it inhomogeneous}. 
\hfill $\Diamond$
\end{Def}

\begin{rems}
\label{R:2.3.2}

(i) The definition of synchrony in \cite{GS23,GST05,SGP03} implies that
synchronous nodes must be input equivalent. Thus for EI networks, 
nodes of type $N^E$ cannot synchronize with nodes of type $N^I$ 
as, by Definition~\ref{Def:input_equiv} (b), 
nodes of different types are not input equivalent. 
\hfill $\Diamond$

(ii) Every REI and UEI
network has two distinct node-types,  $N^E $ and $N^I$. 
Thus 
REI and UEI networks are inhomogeneous.

(iii) Since PEI and CEI networks have nodes of the same type, 
there can be homogeneous PEI and CEI networks.
\end{rems}

A network is {\it transitive} if there is a closed  
arrow-path containing every node. 
A non-transitive network is often said to be 
{\it feedforward}. This term is also used for a stronger property:
the nodes can be given a partial ordering such that the tail node of any arrow
is smaller in the ordering than its head node.

\subsection{Admissible ODEs}
\label{S:AODE}
The dynamic evolution of node variables $x_i$ is governed by a 
system of ordinary differential equations, said to be admissible. 
The form of admissible ODEs for EI networks can be deduced from
the general definition in \cite{GS23,GST05,SGP03}, but for convenience we
describe it explicitly. 
We assume in this paper that all nodes have the same state space, 
say $P = \R^m$ for some $m > 0$. By definition, input equivalent nodes 
must have the same node-type. Moreover, for each node $i$ in a given input class, the dynamics is 
governed by the same smooth function, say $f : P^{k+1} \to P$, evaluated at the node 
$i$ and at the node tails of the $n_e$ excitatory and the $n_i$ inhibitory 
arrows targeting that node.  For the special case of EI networks, we have: 

\begin{Def}
A system of ODEs is {\it admissible} for an EI network if it has the form
\[
 \dot{x}_i = f_i\left(x^s_i; \overline{x^+_{i_1}, \ldots, x^+_{i_{n_e}}}; \overline{x^-_{i_{n_e+1}}, \ldots, x^-_{i_{n_e+n_i}}}\right) 
\]
where $x^s_i \in \{x_i, x^+_i,x^-_i \}$ and 
the overlines indicate that the function $f_i$ is symmetric in the 
overlined variables.
The node variables are indexed by $i$. 
The {\it multiset} of all tail nodes of
input arrows is the union of two subsets:  
the multiset $\{i_1, \ldots, i_{n_e}\}$ of all tail nodes of the
excitatory input set of node $i$, and the multiset $\{i_{n_e+1}, \ldots, i_{n_e+n_i}\}$
of all tail nodes of  the inhibitory input set of node $i$. 
The functional notation converts
these multisets into tuples of the corresponding variables. 
We use the superscripts $+$ and $-$, as a notation convention, to make the distinction between the input variables corresponding to tail nodes in the excitatory and in the inhibitory input sets, respectively. Analogously, when there are two distinct node-types $N^E$ and $N^I$, we use the superscripts $+$ and $-$ to make the distinction between the state variable of excitatory and inhibitory nodes. Otherwise, no subscript is used.

Moreover, if nodes $i,j$ are in the same input class then $f_i=f_j$.
\hfill $\Diamond$
\end{Def}

\begin{rems}
(i) {\it Multiple arrows} are permitted. That is,
distinct arrows in $I^E(i)$ can have the same tail node, which is why we use multisets.
The same goes for $I^I(i)$. 
(ii) {\it Self-loops} are also permitted. That is, a node can input an arrow to itself.
In biology this is called {\it autoregulation}. 
\hfill $\Diamond$
\end{rems}

If nodes $i$ and $j$ are input equivalent then
$\#I^E(i) = \#I^E(j)$ and $\#I^I(i) = \#I^I(j)$.
Therefore, if there is an  arrow in $I(i)$ of a certain arrow-type, 
then there is also an arrow in $I(j)$ of the same type. The evolution in time of node $j$ is defined similarly to that of node $i$, using the same function $f$ evaluated at 
 $x_j$ and at the corresponding node tails. 
 The evolution of nodes in different input classes is governed by 
 different functions $f_i$, one for each input class. 

\begin{exam} \label{Ex:smolen}
The {\it Smolen oscillator} in Figure~\ref{fig:Smolen} is an REI network. 
 Node $1$ is type $N^E$ and node 
$2$ is type $N^I$. There are two  type $A^E$ arrows; one from $1$ to itself and the other from $1$ to $2$. 
There are two type $A^I$ arrows; one from $2$ to itself and the other from $2$ to $1$. 

Each node has excitatory and inhibitory input sets with cardinality $1$.
Both nodes are autoregulatory. 

The node-types are different so this network is inhomogeneous. 
It is also {\it not} $\ZZ_2$ symmetric. Admissible ODEs are:
\begin{equation}
\label{E:smolen}
\begin{array}{l}
\dot{x}_1 = f(x^+_1; x^+_1; x^-_2)\\
\dot{x}_2 = g(x^-_2; x^+_1; x^-_2)
\end{array}\, .
\end{equation}
Here, $x^+_1,x^-_2 \in P$ where $P$ is the node state space and $f,g:\, P^3  \to P$ are smooth functions. This differs from the total phase space, which is $P^2$; such 
a difference occurs when there are multiple arrows or self-loops.
With suitable dynamics modelling a GRN this network exhibits periodic oscillations, Purcell {\it et al.}~\cite{PSGB10}.
\hfill $\Diamond$
\end{exam}

\begin{rem} \label{Rmk:PEIsmolen}
Considering the network in Example~\ref{Ex:smolen}
with the alternative convention on node-types in PEI networks, 
all nodes are input equivalent, so
the functions $f$ and $g$ are equal. Now there is a balanced colouring
(fibration symmetry) in which $x_1 = x_2$ 
and the diagonal subspace $\Delta = \{x:\, x_1=x_2\}$
is flow-invariant for any admissible ODE. Thus synchronous states
can occur robustly.
However, the Smolen oscillator network
remains asymmetric because there are
 two arrow types. 
\hfill $\Diamond$
\end{rem}

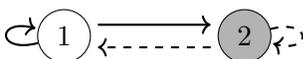
\begin{figure}
\begin{tikzpicture}
 [scale=.15,auto=left, node distance=2cm
 ]
 \node[fill=white,style={circle,draw}] (n1) at (4,0) {\small{1}};
  \node[fill=black!30,style={circle,draw}] (n2) at (20,0) {\small{2}};
  \path
   (7,1)  [->] edge[thick] node {}  (17,1)
 (n1)  [line width=1pt,->]  edge[loop left=90,thick] node {} (n1);
 \path 
 (17,-1)  [dashed,->] edge[thick] node {}  (7,-1)
 (n2)  [dashed,->]  edge[loop right=90,thick] node {} (n2); 
 \end{tikzpicture}  
\caption{Smolen oscillator.} \label{fig:Smolen}
\end{figure}

We can represent an $n$-node network by its {\it adjacency matrix}, which is the $n \times n$
matrix $A = (a_{ij})$ such that 
$a_{ij}$ is the number of arrows from node $j$ to node $i$. Ordinarily, this representation could lose information, 
because it fails to distinguish the different arrow-types. However, conditions (c) and (d) of Definition~\ref{def:EIN} allow us to deduce the arrow-types for REI networks, provided we
know the node-types of nodes $i$ and $j$. 
The networks that are PEI, CEI, and
 UEI require two adjacency matrices, one for each arrow-type; see the next example.

\begin{exam}
\label{ex:smolen_adj}
The adjacency matrix of the Smolen network in Figure~\ref{fig:Smolen}  is 
$$
\left[
\begin{array}{cc}
1 & 1 \\
1 & 1 
\end{array}
\right]\, .
$$
For some purposes, such as ODE-equivalence, we must distinguish node- and arrow-types and equip each with its own adjacency matrix. Here there are four:
$$
\begin{array}{ll} 
\mbox{Node-type $N^E$: } 
\left[
\begin{array}{cc}
1 & 0 \\
0 & 0 
\end{array}
\right]; \quad 
\mbox{Node-type $N^I$: } 
\left[
\begin{array}{cc}
0& 0 \\
0 & 1 
\end{array}
\right]; \\ 
\ \\
\mbox{Arrow-type $A^E$: } 
\left[
\begin{array}{cc}
1 & 0 \\
1 & 0 
\end{array}
\right]; \quad 
\mbox{Arrow-type $A^I$: } 
\left[
\begin{array}{cc}
0& 1 \\
0 & 1 
\end{array}
\right]\, .
\end{array}
$$
Alternatively, if we assume the Smolen network to be a PEI network (just one node-type), then there 
are three adjacency matrices:
$$
\begin{array}{ll} 
\mbox{Node-type $N^E = N^I$: } 
\left[
\begin{array}{cc}
1 & 0 \\
0 & 1 
\end{array}
\right]; \\ 
\ \\
\mbox{Arrow-type $A^E$: } 
\left[
\begin{array}{cc}
1 & 0 \\
1 & 0 
\end{array}
\right]; \quad 
\mbox{Arrow-type $A^I$: } 
\left[
\begin{array}{cc}
0& 1 \\
0 & 1 
\end{array}
\right]\, .
\end{array}
$$
\hfill $\Diamond$
\end{exam}

\begin{rem} \label{rem:ODEequiv}
(i) Different networks sometimes have the same set of admissible ODEs,
 for any choice of node state spaces, when their nodes are identified
 by a suitable bijection that preserves node state spaces. 
 Such networks are {\it ODE-equivalent}, \cite{DS05,GS23, GST05}.
 
 (ii) By Dias and Stewart~\cite[Theorem 7.1, Corollary 7.9]{DS05}, two networks with the same number of nodes are ODE-equivalent if and only if, for a suitable identification of nodes, they have
 the same vector spaces of {\it linear} admissible maps when node state spaces are $\R$.
 Equivalently, the adjacency matrices of all node- and arrow-types span the same space.
  Trivially, this remains true when restricting to the set of EI networks. 
  
(iii) Following Aguiar and Dias~\cite{AD07,AD07a},  given an ODE-class of EI networks, we can distinguish a subclass containing the EI networks in the ODE-class that have a minimal number of arrows. This is a {\it minimal subclass}. As an example, it is proved in ~\cite{AD07} that the ODE-class of every homogeneous regular 
$n$-node network with only one arrow-type contains a unique minimal network. In general, the minimal class of an ODE-class need not be a singleton. 

(iv) For REI networks, we saw that the node-types determine the arrow-types
and the adjacency matrices naturally decompose into four blocks.
The linear condition in (ii) preserves this decomposition, so two REI networks are
ODE-equivalent if and only if these components are separately ODE-equivalent. 
Moreover, note that an ODE-class for an REI network always contains UEI networks 
that are not REI networks. Nevertheless, 
the methods of 
\cite{AD07,AD07a} 
can easily be adapted to
prove that there is an ODE-equivalence that preserves the
REI structure 
for minimal subclasses. 
The corresponding issue for UEI networks is not 
true. 
For example, consider the 2-node UEI-network with arrow-type adjacency
matrices $$
\mbox{Arrow-type $A^E$: } 
A_3 = 
\left[
\begin{array}{cc}
0 & 0 \\
1 & 0 
\end{array}
\right]; \quad 
\mbox{Arrow-type $A^I$: } 
A_4 = 
\left[
\begin{array}{cc}
0& 1 \\
1 & 0 
\end{array}
\right]\, .
$$
Now observe that $A_4 - A_3 = 
\left[
\begin{array}{cc}
0& 1 \\
0 & 0 
\end{array}
\right]$.
Thus, the UEI-network is ODE-equivalent to the 
minimal 
REI-network with arrow-types adjacency matrices
$\left[
\begin{array}{cc}
0 & 0 \\
1 & 0 
\end{array}
\right]$ and 
$ \left[
\begin{array}{cc}
0& 1 \\
0 & 0 
\end{array}
\right]$. 
\hfill $\Diamond$
\end{rem} 

 \begin{exam} 
In Sections~\ref{S:C2REINV2} and \ref{S:C2UEIV2}, we determine the ODE-classes for the $2$-node REI and UEI networks, respectively, with valence up to $2$. 
As the ODE-classes for REI networks also contain UEI networks that are not REI networks, we have, as mentioned in Remark~\ref{rmk:UEI_classes}, 
that two of the ODE-classes for UEI networks coincide with the two ODE-classes for REI networks. The other two ODE-classes for UEI networks contain 
only UEI networks that are not REI networks. However, if we consider only the minimal ODE-classes, 
then for the REI case the classes contain only REI networks. For the UEI case, 
two of the classes contain only REI networks and the other two only UEI networks that are not REI networks.
 \hfill $\Diamond$
 \end{exam}

 \begin{exam}
 The  
REI 
network in Figure~\ref{fig:ODE_equiv_Smolen} is ODE-equivalent to the  
REI 
Smolen network in Figure~\ref{fig:Smolen} and it is minimal. 
 Moreover, the admissible ODE \eqref{E:smolen} determines an arbitrary dynamical
system in $(x_1,x_2)$. 
 \hfill $\Diamond$
 \end{exam}
 
 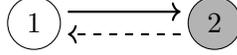
\begin{figure}
\begin{tikzpicture}
 [scale=.15,auto=left, node distance=2cm
 ]
 \node[fill=white,style={circle,draw}] (n1) at (4,0) {\small{1}};
  \node[fill=black!30,style={circle,draw}] (n2) at (20,0) {\small{2}};
  \path
   (7,1)  [->] edge[thick] node {}  (17,1);
  \path 
 (17,-1)  [dashed,->] edge[thick] node {}  (7,-1);
 \end{tikzpicture}  
\caption{The minimal  $2$-node network ODE-equivalent to the Smolen oscillator in Figure~\ref{fig:Smolen}. } \label{fig:ODE_equiv_Smolen}
\end{figure}

\subsection{Balanced Colourings and Quotient Networks}
\label{S:BCQN}

We specialise \cite[Definition~6.4]{SGP03} to EI networks: 

\begin{Def}
Consider an $n$-node EI network. Let $\bowtie$ be an equivalence relation on the set $N^E \cup N^I$ of nodes, refining input equivalence. If $i \bowtie j$ then either $i,j \in N^E$ or $i,j \in N^I$.
 The relation $\bowtie$ is {\it balanced} if, for every two nodes $i,j$ such that $i \bowtie j$, there 
are arrow-type preserving  bijections $\beta^E: I^E(i) \to I^E(j)$ and $\beta^I: I^I(i) \to I^I(j)$ such that if $e \in I^E(i)$ then $\TT(e) \bowtie \TT\left(\beta^E(e)\right)$ and if $e \in I^I(i)$ then $\TT(e) \bowtie \TT\left(\beta^I(e)\right)$.

\hfill $\Diamond$
\end{Def}

\begin{Def} 
Let $\mathcal{G}$ be an EI network in which all node state spaces are $P$,
and let $\bowtie$ be a balanced equivalence relation on the set of nodes. Then  
the {\it polydiagonal (subspace)} $\Delta_{\bowtie} \,\subseteq\, P^n$ 
consists of all $x =(x_1, \ldots, x_n)$ such that $i \bowtie j$ implies
that $x_i = x_j$.
The space $\Delta_{\bowtie}$ is also called a {\it synchrony subspace}. 
 \hfill $\Diamond$
\end{Def}

By \cite[Proposition 10.20 ]{GS23} or \cite[Theorem 6.5]{SGP03}, a colouring $\bowtie$ is balanced  
if and only if $\Delta_{\bowtie}$ is invariant under the flow of every linear network admissible ODE.
This condition holds for any choice of node state spaces.
Equivalently, $\bowtie$ is balanced if and only if $\Delta_{\bowtie}$ is invariant under all  adjacency matrices.

More generally, a vector subspace is a balanced polydiagonal, 
a synchrony subspace, 
 if and only if it is invariant
under any admissible ODE (including nonlinear ones): see \cite[Theorem 10.21 ]{GS23}.
Synchrony subspaces therefore play a crucial role in the dynamics of 
admissible ODEs. See for example \cite{AADF11}.

Following \cite[Section 5]{GST05}, given an EI network $\mathcal{G}$ and a balanced equivalence relation~$\bowtie$ with $k$ classes, we can define
 the {\it quotient network} of $\mathcal{G}$ by $\bowtie$. This is the $k$-node network 
whose nodes correspond to the $\bowtie$-equivalence classes, and whose 
arrows are the projections of those of $\mathcal{G}$, preserving arrow-type. 
It is variously denoted by $\mathcal{G}/{\bowtie}$, $\mathcal{G}^{\bowtie}$, or $\mathcal{G}_{\bowtie}$.

Trivially, we have: 
\begin{prop}
For any EI network $\mathcal{G}$ having a balanced equivalence relation~$\bowtie$ with $k$ classes, the quotient of $\mathcal{G}$ by $\bowtie$ is a $k$-node EI network.
Moreover, the quotient of $\mathcal{G}/{\bowtie}$ is an REI  
(resp. UEI, 
CEI) network if and only if the network $G$ is REI (resp. UEI, CEI).
\hfill $\Diamond$
\end{prop}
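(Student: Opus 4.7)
The plan is to reduce the statement to a direct inspection of how the quotient construction interacts with node- and arrow-types in Definition~\ref{def:EIN}. Since $\bowtie$ refines input equivalence, any two nodes in the same $\bowtie$-class share the same node-type, so each of the $k$ equivalence classes carries a well-defined node-type in $\mathcal{G}/{\bowtie}$. Likewise, by construction the projection of arrows preserves the arrow-type label, so $\mathcal{G}/{\bowtie}$ inherits the arrow-type partition from $\mathcal{G}$. This already proves the first sentence: the quotient is a $k$-node EI network with whichever of the node-types $N^E,N^I$ and arrow-types $A^E,A^I$ appear among the classes and projected arrows of $\mathcal{G}$.

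For the second sentence I would argue all three cases uniformly. Every arrow $\bar e$ of $\mathcal{G}/{\bowtie}$ is the projection of some arrow $e$ of $\mathcal{G}$, with $\TT(\bar e)=[\TT(e)]$, and both $\bar e$ and $[\TT(e)]$ carry the same types as $e$ and $\TT(e)$ respectively. So conditions (a), (b), (c), (d) of Definition~\ref{def:EIN} for $\mathcal{G}/{\bowtie}$ are logically equivalent to the same conditions on the images of $\mathcal{G}$-nodes and $\mathcal{G}$-arrows. Since the projection is surjective on both nodes and arrows, these conditions hold in the quotient if and only if they hold in $\mathcal{G}$.

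I would then read off the three equivalences from this observation. For REI, conditions (a)--(d) characterise both $\mathcal{G}$ and $\mathcal{G}/{\bowtie}$, so each implies the other. For UEI, only (a) and (b) are required, and again these transfer in both directions. For CEI, we use the convention that $N^E$ and $N^I$ have been identified and only condition (b) on arrow-types is relevant, which is clearly preserved and reflected by the quotient construction.

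The only step that requires a moment of care is checking that no new nodes or arrows can appear in $\mathcal{G}/{\bowtie}$ whose types violate the defining conditions; this is exactly where one uses that the quotient construction is defined purely by projecting nodes and arrows of $\mathcal{G}$, preserving types. I do not expect any genuine obstacle — the result is a direct unpacking of the definitions, which is why the proposition is labelled ``trivially'' in the text.
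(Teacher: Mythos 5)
Your proposal is correct and follows essentially the same route as the paper: the key observation in both is that $\bowtie$ refines input equivalence (hence node equivalence), so each $\bowtie$-class inherits a well-defined node-type, and the arrow projection preserves arrow-types, from which conditions (a)--(d) of Definition~\ref{def:EIN} transfer in both directions. Your version just spells out the surjectivity of the projection more explicitly than the paper does, and correctly stops short of the PEI case, which (as the paper's subsequent remark notes) genuinely fails to be preserved.
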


\begin{proof}
Let $\mathcal{G}$ be an EI network  having a balanced equivalence relation~$\bowtie$ with $k$ classes. By definition, $\bowtie$ refines the input equivalence relation which  refines that of node equivalence. Thus, the quotient $\mathcal{G}/{\bowtie}$ is a REI  
(resp. UEI) network if and only if the network $G$ is  REI (resp. UEI). Trivially, a quotient of a CEI network is a CEI network.
\end{proof}

\begin{rem} 
In the case of a PEI network, the quotient network 
is always a CEI network, but it might not be a PEI network.
In a PEI network, 
if two nodes of the same colour output arrows of different type (one outputs excitatory arrows and the other inhibitory arrows) then 
in the quotient there is a node that outputs arrows of the two types, 
that is, the quotient is a CEI network. 
As an example, if $\mathcal{G}$ is the Smolen oscillator  in Figure~\ref{fig:Smolen} and we assume that the two nodes have the same type, that is, 
$\mathcal{G}$ is a PEI network, then the equivalence relation $\bowtie\, = \left\{ \{1,2\} \right\}$ is balanced and $\mathcal{G}/{\bowtie}$ is a CEI network. 
In fact, $\mathcal{G}/{\bowtie}$ is the one-node network with two arrows (of different type) to itself and  the synchrony space $\Delta_{\bowtie} = \{x:\, x_1=x_2\}$ 
corresponds to the the diagonal subspace. Recall Example~\ref{Ex:smolen} and Remark~\ref{Rmk:PEIsmolen}.
\hfill $\Diamond$
\end{rem}

\begin{rem} 
By \cite[Theorem 10.28]{GS23} or \cite[Theorem 5.2]{GST05}, every admissible ODE for $\mathcal{G}$ restricted to $\Delta_{\bowtie}$ 
can be identified with an admissible ODE for the quotient $\mathcal{G}/{\bowtie}$. Moreover, every admissible ODE for the quotient $\mathcal{G}/{\bowtie}$ can be identified with the restriction to $\Delta_{\bowtie}$ of an
admissible ODE for $\mathcal{G}$.
\hfill $\Diamond$
\end{rem}

\section{Classification of Connected 2-node Excitatory-Inhibitory Networks}
\label{S:CC2EIN}

With these preliminaries out of the way, we now classify EI networks with two nodes. 
It is enough to classify connected
networks, since components of a disconnected network have fewer nodes
and the $1$-node case is trivial. 

We repeat Convention (b) in Section~\ref{S:FD}:
all classifications are stated up to duality (interchange of the $E$ and $I$ types) and 
renumbering of nodes. We also consider ODE-equivalence. Based on this classification, 
we then list all the connected $2$-node EI networks with valence at most 2 for any node. 
This condition eliminates most multiple arrows, which are uncommon (though possible) in real biological networks.

\subsection{Connected 2-node REI Networks}
\label{S:C2REIN}

We consider first $2$-node REI networks. 
By duality, we assume that node $1$ is excitatory and node $2$ is inhibitory.

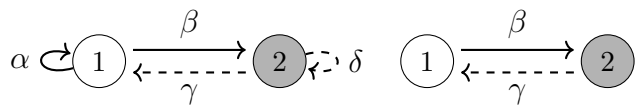
\begin{figure}
\begin{tabular}{ll}
\begin{tikzpicture}
 [scale=.15,auto=left, node distance=2cm
 ]
 \node[fill=white,style={circle,draw}] (n1) at (4,0) {\small{1}};
  \node[fill=black!30,style={circle,draw}] (n2) at (20,0) {\small{2}};
  \path
   (7,1)  [->] edge[thick] node {$\beta$}  (17,1)
 (n1)  [line width=1pt,->]  edge[loop left=90,thick] node {$\alpha$} (n1);
 \path 
 (17,-1)  [dashed,->] edge[thick] node {$\gamma$}  (7,-1)
 (n2)  [dashed,->]  edge[loop right=90,thick] node {$\delta$} (n2); 
 \end{tikzpicture}  & 
 \begin{tikzpicture}
 [scale=.15,auto=left, node distance=2cm
 ]
 \node[fill=white,style={circle,draw}] (n1) at (4,0) {\small{1}};
  \node[fill=black!30,style={circle,draw}] (n2) at (20,0) {\small{2}};
  \path
   (7,1)  [->] edge[thick] node {$\beta$}  (17,1);
 \path 
 (17,-1)  [dashed,->] edge[thick] node {$\gamma$}  (7,-1);
 \end{tikzpicture} 
 \end{tabular}
 \caption{Two $2$-node ODE-equivalent REI networks. Node $1$ is excitatory, node $2$ is inhibitory; the nonnegative integer arrow multiplicities are $\alpha, \beta, \gamma, \delta$. The network is connected when one of $\beta$ or $\gamma$ is nonzero.} \label{fig:generalREI}
\end{figure}

\begin{prop} \label{prop_general_val_2REI}
A $2$-node connected REI network is the network of Figure~{\rm \ref{fig:generalREI}} (left) for some choice of nonnegative integer arrow multiplicities $\alpha, \beta, \gamma, \delta$, where one of $\beta$ or $\gamma$ is nonzero. This network has admissible ODEs
\begin{equation}
\begin{array}{l}
\dot{x}_1 = f(x^+_1; \underbrace{\overline{x^+_1, \ldots, x^+_1}}_{{\alpha}}; \underbrace{\overline{x^-_2, \ldots, x^-_2}}_{\gamma}), \\
\\
\dot{x}_2 =g(x^-_2;\underbrace{\overline{x^+_1, \ldots, x^+_1}}_{\beta}; \underbrace{\overline{x^-_2, \ldots, x^-_2}}_{{\delta}}). \\
\end{array}
\label{eq:general2REI}
\end{equation}
Here $x^+_1, x^-_2 \in \R^k$, so the total state space is $\R^{2k}$, and $f$ and $g$ 
are smooth functions invariant under permutation of the variables under each overline. 

To simplify notation, temporarily omit the $\pm$ superscripts, and define
\begin{equation} 
\begin{array}{l} 
p = (x_1, \underbrace{x_1, \ldots, x_1}_{{\alpha}}, \underbrace{x_2, \ldots, x_2}_{\gamma}); \qquad  
q= (x_2, \underbrace{x_1, \ldots, x_1}_{\beta}, \underbrace{x_2, \ldots, x_2}_{{\delta}});\\
\\
a_1 = \frac{\partial f}{\partial x_1}\left|_{p}\right.; \qquad 
b_1 =  \frac{\partial f}{\partial x_2}\left|_{p}\right.; \qquad 
c_1 = \frac{\partial f}{\partial x_{\alpha + 2}}\left|_{p}\right.; \\
\\
d_1 = \frac{\partial g}{\partial x_1}\left|_{q}\right.; \qquad  
e_1 = \frac{\partial g}{\partial x_2}\left|_{q}\right.; \qquad 
f_1 = \frac{\partial g}{\partial x_{\beta + 2}}\left|_{q}\right.\, .
\end{array}
\label{eq:notation_REI}
\end{equation}
Then the linearization of {\rm (\ref{eq:general2REI})} at  $(x_1,x_2)$ is the $2k \times 2k$ matrix 
\begin{equation}
\left[
\begin{array}{cc}
a_1 + \alpha b_1 & \gamma c_1 \\
\beta e_1 & d_1 + \delta f_1
\end{array}
\right]\, . 
\label{eq:lin_REI}
\end{equation}
\end{prop}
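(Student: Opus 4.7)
My proof proposal would proceed in three phases, matching the three claims in the statement.

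\textbf{Step 1: Enumeration of arrows.} Since node~$1$ has type $N^E$ and node~$2$ has type $N^I$ (using Convention (b) and duality), the REI conditions (c)--(d) of Definition~\ref{def:EIN} force every $A^E$ arrow to have tail~$1$ and every $A^I$ arrow to have tail~$2$. There are therefore at most four arrow slots available on a 2-node REI network: $A^E$ self-loops at $1$, $A^E$ arrows $1 \to 2$, $A^I$ arrows $2 \to 1$, and $A^I$ self-loops at $2$, whose multiplicities we label $\alpha,\beta,\gamma,\delta$ respectively. This reproduces Figure~\ref{fig:generalREI} (left) exactly. Connectedness of the underlying graph prohibits the two nodes from being isolated from each other, so at least one of the inter-node multiplicities $\beta,\gamma$ must be nonzero.

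\textbf{Step 2: Admissible ODEs.} Applying the definition in Section~\ref{S:AODE} to this network, the function $f_1$ governing node~$1$ takes as arguments the internal state $x_1^+$, followed by the tail-node variables of its $\alpha$ excitatory inputs (all of which equal $x_1^+$) and its $\gamma$ inhibitory inputs (all equal to $x_2^-$), with symmetry imposed within each arrow-type block. The same reasoning applied to node~$2$ gives the second equation of~\eqref{eq:general2REI}. Because nodes $1$ and $2$ have different node-types, they lie in different input classes, so the two governing functions are independent; this is why we name them $f$ and $g$ rather than a single function.

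\textbf{Step 3: Linearization.} To obtain~\eqref{eq:lin_REI} we differentiate~\eqref{eq:general2REI} with respect to $x_1$ and $x_2$. The key observation is that symmetry of $f$ under permutation of the $\alpha$ overlined excitatory arguments forces the partial derivatives with respect to each of those slots to coincide at a symmetric evaluation point; hence all $\alpha$ such partials equal the single scalar (really $k \times k$ block) $b_1$ defined in~\eqref{eq:notation_REI}. An analogous symmetry argument applies to the $\gamma$, $\beta$, and $\delta$ overlined slots in the two equations. Summing the internal-slot contribution with the $\alpha$ identical external-slot contributions via the chain rule gives the $(1,1)$ entry $a_1 + \alpha b_1$; the other three entries arise by the same bookkeeping.

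The only subtle point in the whole argument is justifying that symmetry of $f$ under the permutation action really does imply equality of the partials at the diagonal evaluation point $p$: this follows because, for a symmetric smooth function of equal arguments, the derivatives along any two symmetric coordinate directions differ only by the transposition that swaps them, and the transposition fixes~$p$. Beyond this observation, the proof is essentially bookkeeping with the chain rule and the definition of admissibility; no deep machinery is required.
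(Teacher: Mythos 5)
Your proposal is correct and follows essentially the same route as the paper's proof: the REI tail conditions pin down the four arrow multiplicities (the paper phrases this via the four adjacency matrices in \eqref{E:adj_mx_4types}), the admissible ODEs follow directly from the definition in Section~\ref{S:AODE}, and the linearization is obtained by using the permutation symmetry of $f$ and $g$ to identify the partial derivatives within each overlined block at the evaluation points $p$ and $q$. Your extra remark justifying why symmetry forces equality of the partials at $p$ is a correct elaboration of a step the paper states without comment.
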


\begin{proof}
Since node $1$ is excitatory the tail of any type $A^E$ arrow is node $1$, and since node $2$ is inhibitory the tail of any type $A^I$ arrow is node $2$. Thus the adjacency matrices are 
\begin{equation}
\label{E:adj_mx_4types}
\begin{array}{ll} 
\mbox{Node-type $N^E$: } 
A_1 = 
\left[
\begin{array}{cc}
1 & 0 \\
0 & 0 
\end{array}
\right]; \quad 
\mbox{Node-type $N^I$: } 
A_2 = 
\left[
\begin{array}{cc}
0& 0 \\
0 & 1 
\end{array}
\right]; \\ 
\ \\
\mbox{Arrow-type $A^E$: } 
A_3 = 
\left[
\begin{array}{cc}
\alpha & 0 \\
\beta & 0 
\end{array}
\right]; \quad 
\mbox{Arrow-type $A^I$: } 
A_4= 
\left[
\begin{array}{cc}
0& \gamma \\
0 & \delta 
\end{array}
\right];
\end{array}
\end{equation}
where at least $\beta$ or $\gamma$ is nonzero to obtain a connected network. 
These conditions correspond to the network in Figure~\ref{fig:generalREI} (left).

Concerning the linearization of equations (\ref{eq:general2REI}) 
at $(x_1,x_2)$ and the notation in
(\ref{eq:notation_REI}), symmetry of $f$ and $g$ implies that 
$$
\begin{array}{l}
b_1 =  \frac{\partial f}{\partial x_2}\left|_{p}\right. =  \frac{\partial f}{\partial x_3}\left|_{p}\right. = \cdots = 
 \frac{\partial f}{\partial x_{\alpha + 1}}\left|_{p}\right., \quad 
c_1 = \frac{\partial f}{\partial x_{\alpha + 2}}\left|_{p}\right. = \frac{\partial f}{\partial x_{\alpha + 3}}\left|_{p}\right.  = \cdots = 
 \frac{\partial f}{\partial x_{\alpha + \gamma + 1}}\left|_{p}\right., \\
 \\
e_1 =  \frac{\partial g}{\partial x_2}\left|_{q}\right. =  \frac{\partial g}{\partial x_3}\left|_{q}\right. = \cdots = 
 \frac{\partial g}{\partial x_{\beta + 1}}\left|_{q}\right., \quad 
f_1 = \frac{\partial g}{\partial x_{\beta + 2}}\left|_{q}\right. = \frac{\partial g}{\partial x_{\beta + 3}}\left|_{q}\right.  = \cdots = 
 \frac{\partial g}{\partial x_{\beta + \delta + 1}}\left|_{q}\right. \, .
\end{array}
$$
From this we obtain (\ref{eq:lin_REI}).
\end{proof}

\begin{rem}
The $2$-node REI network in Figure~\ref{fig:generalREI} (left) for $\alpha= \beta= \gamma= \delta=1$ is the Smolen network in Figure~\ref{fig:Smolen}. 
\hfill $\Diamond$ 
\end{rem}

\subsection{Connected 2-node REI Networks: ODE-classes}
\label{S:C2REINODE}

We now deduce the classification of connected 2-node REI networks up to ODE-equivalence.

\begin{prop} \label{prop:2_node_ODE}
There are exactly two ODE-classes of connected $2$-node REI networks, with representatives {\rm NH1} and {\rm NH2}
pictured in Figure~{\rm \ref{fig:ODE_general_REIV}}. The associated admissible ODEs are stated in Table~{\rm \ref{table:ODE_general_REIV}}. 
\end{prop}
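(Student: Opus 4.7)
The plan is to apply the characterization of ODE-equivalence from Remark~\ref{rem:ODEequiv}(ii): two networks on the same node set are ODE-equivalent if and only if, after a suitable identification of nodes, the adjacency matrices of all node- and arrow-types span the same vector space. For REI networks, Remark~\ref{rem:ODEequiv}(iv) tells us that this span decomposes in a block-wise fashion, which I will exploit directly.

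First I would fix the convention (Proposition~\ref{prop_general_val_2REI}) that node~$1$ is excitatory and node~$2$ is inhibitory. Then every connected $2$-node REI network has the four adjacency matrices displayed in \eqref{E:adj_mx_4types}. The key observation is that $A_1$ and $A_3$ have support only in column~$1$, while $A_2$ and $A_4$ have support only in column~$2$. Hence the total span of $\{A_1,A_2,A_3,A_4\}$ is the direct sum of its column-$1$ part and its column-$2$ part, and two such networks are ODE-equivalent precisely when these two columnar spans coincide.

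Next I would analyse each column separately. In column~$1$, the relevant vectors are $(1,0)^T$ (from $A_1$) and $(\alpha,\beta)^T$ (from $A_3$); their span is $\R\cdot(1,0)^T$ if $\beta=0$ and the whole of $\R^2$ if $\beta\neq 0$. A symmetric analysis shows that the column-$2$ span is $\R\cdot(0,1)^T$ if $\gamma=0$ and $\R^2$ if $\gamma\neq 0$. Since connectedness forces $\beta+\gamma>0$, only three outcomes remain: (i)~$\beta\neq 0,\ \gamma=0$; (ii)~$\beta=0,\ \gamma\neq 0$; (iii)~both nonzero. Duality combined with the swap of the two nodes sends $(\alpha,\beta,\gamma,\delta)$ to $(\delta,\gamma,\beta,\alpha)$, interchanging cases (i) and (ii), so exactly two ODE-classes remain.

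Finally I would exhibit minimal representatives. In the first class, the minimum number of arrows compatible with connectedness is realised by $(\alpha,\beta,\gamma,\delta)=(0,1,0,0)$; this is network NH1. In the second class, the minimum is realised by $(\alpha,\beta,\gamma,\delta)=(0,1,1,0)$; this is network NH2. Both are already drawn in Figure~\ref{fig:ODE_general_REIV}, and the corresponding admissible ODEs are obtained by specialising \eqref{eq:general2REI} to these parameter values, yielding Table~\ref{table:ODE_general_REIV}. The only delicate point is the duality identification of cases (i) and (ii); this is routine once one tracks how duality interchanges $N^E\leftrightarrow N^I$ and $A^E\leftrightarrow A^I$ and is then composed with the node swap to restore the standing convention.
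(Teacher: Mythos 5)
Your proposal is correct and follows essentially the same route as the paper's proof: both reduce the question to comparing the span $\langle A_1,A_2,A_3,A_4\rangle$ of the adjacency matrices in \eqref{E:adj_mx_4types}, observe that this span is determined solely by whether $\beta$ and $\gamma$ vanish (your column-wise decomposition is just a repackaging of the paper's reduction of $A_3,A_4$ to the strictly off-diagonal matrices), and invoke duality to identify the two mixed cases. The identification of NH1 and NH2 as representatives and the specialisation of \eqref{eq:general2REI} to obtain Table~\ref{table:ODE_general_REIV} also match the paper.
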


\begin{figure}[!ht]
\begin{center}
{\tiny 
\begin{tabular}{ll} 
NH1 \begin{tikzpicture}
 [scale=.15,auto=left, node distance=1.5cm, 
 ]
 \node[fill=white,style={circle,draw}] (n1) at (4,0) {\small{1}};
  \node[fill=black!30,style={circle,draw}] (n2) at (14,0) {\small{2}};
 \draw[->, thick] (n1) edge[thick] node {}  (n2); 
 \end{tikzpicture}  
\quad & \quad 
NH2 \begin{tikzpicture}
 [scale=.15,auto=left, node distance=1.5cm, 
 ]
 \node[fill=white,style={circle,draw}] (n1) at (4,0) {\small{1}};
  \node[fill=black!30,style={circle,draw}] (n2) at (14,0) {\small{2}};
\path
        (n1) [->,thick]  edge[bend right=10] node { } (n2);
\path 
        (n2) [->,thick,dashed]  edge[bend left=-10] node { } (n1);
 \end{tikzpicture}   
\end{tabular}}
\end{center}
\caption{The two ODE-classes of connected $2$-node REI networks. 
Table~\ref{table:ODE_general_REIV} states the corresponding admissible ODEs. The network NH2 is the minimal network ODE-equivalent to the Smolen network of Figure~\ref{fig:Smolen}.}
\label{fig:ODE_general_REIV}
\end{figure}
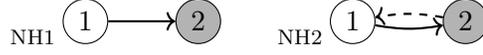

\begin{table}[!htb]
{\tiny 
\begin{tabular}{|l|l|}
\hline 
 &   \\
NH1
$ 
\begin{array}{l}
\dot{x}_1 = f(x^+_1) \\
\dot{x}_2 = g(x^-_2; x^+_1)
\end{array}$
& 
NH2
$ 
\begin{array}{l}
\dot{x}_1 = f(x^+_1; x^-_2) \\
\dot{x}_2 = g(x^-_2; x^+_1)
\end{array}$ \\
 &   \\
\hline  
\end{tabular}
}
\vspace{.2cm}
\caption{Admissible ODEs for the networks in Figure~\ref{fig:ODE_general_REIV}.}
\label{table:ODE_general_REIV}
\end{table}

\begin{proof} 
We recall that our classifications are stated  up to duality and numbering of the nodes. 
Any connected $2$-node REI network is of the form presented in Figure~\ref{fig:generalREI} (left), where $\alpha, \beta, \gamma, \delta$ are nonnegative integers representing arrow multiplicities. The adjacency matrices are stated in \eqref{E:adj_mx_4types},
where at least $\beta$ or $\gamma$ is nonzero to guarantee connectedness.  Trivially, 
$$
\langle  A_1, A_2, A_3, A_4\rangle  = \left\langle  A_1, A_2, 
\left[
\begin{array}{cc}
0 & 0 \\
\beta & 0 
\end{array}
\right], 
\left[
\begin{array}{cc}
0& \gamma \\
0 & 0
\end{array}
\right] \right\rangle \, ,
$$
where angle brackets denote the real subspace spanned by their contents.
Therefore the network in Figure~\ref{fig:generalREI} (left) is ODE-equivalent to the network in Figure~\ref{fig:generalREI} (right). 
If $\gamma=0$, so $\beta\not=0$, then
$$
\langle  A_1, A_2, A_3, A_4\rangle  = \left\langle\langle  A_1, A_2, 
\left[
\begin{array}{cc}
0 & 0 \\
1 & 0 
\end{array}
\right]\right\rangle 
$$
giving network NH1 in Figure~\ref{fig:ODE_general_REIV}. (The case $\beta=0$ and $\gamma\not=0$ is dual.) If both $\beta$ and $\gamma$ are nonzero then 
$$
\langle  A_1, A_2, A_3, A_4\rangle  = \left\langle  A_1, A_2, 
\left[
\begin{array}{cc}
0 & 0 \\
1 & 0 
\end{array}
\right], 
\left[
\begin{array}{cc}
0 & 1 \\
0 & 0 
\end{array}
\right]\right\rangle 
$$
giving network NH2 in Figure~\ref{fig:ODE_general_REIV}. 
\end{proof}

\subsection{Connected 2-node REI Networks with valence up to 2}
\label{S:C2REINV2}

Recall that the valence of a node is the number of input arrows to that node.
Using the classification in the previous section, we now list all connected $2$-node EI networks where all nodes have valence $\leq 2$. 
In what follows, a bound on the valence such as $\leq 2$ means
that the valence of {\it each} node satisfies that bound.

\begin{prop}
There are $15$ connected $2$-node REI networks with valence $\leq 2$. Of these, 
$9$ are in the ODE-class of {\rm NH1} in Figure~{\rm \ref{fig:ODE_general_REIV}}, and $6$ are in the ODE-class of {\rm NH2} in Figure~{\rm \ref{fig:ODE_general_REIV}}.
See Figure~{\rm \ref{fig:2NCNREIV2}} and Table~{\rm \ref{table:admissible_2NCNREIV2}}.  
\label{prop:REI2}
\end{prop}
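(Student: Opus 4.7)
The plan is to enumerate connected 2-node REI networks via Proposition~\ref{prop_general_val_2REI}, where every such network corresponds to a quadruple $(\alpha,\beta,\gamma,\delta)\in\mathbb{N}^4$ with $\beta+\gamma\ge 1$ (connectedness). The valence constraint means $\alpha+\gamma\le 2$ (in-degree of node~$1$) and $\beta+\delta\le 2$ (in-degree of node~$2$). I would then quotient by the duality involution $(\alpha,\beta,\gamma,\delta)\mapsto(\delta,\gamma,\beta,\alpha)$, which comes from simultaneously swapping $N^E\leftrightarrow N^I$ and $A^E\leftrightarrow A^I$ while relabeling $1\leftrightarrow 2$. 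Finally I would split the orbits into the two ODE-classes using the criterion extracted from the proof of Proposition~\ref{prop:2_node_ODE}: NH1 corresponds to exactly one of $\beta,\gamma$ being zero, and NH2 to both being nonzero.

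For the total count I would argue as follows. Each of the sums $\alpha+\gamma$ and $\beta+\delta$ can take values $0,1,2$, giving $1+2+3=6$ choices of $(\alpha,\gamma)$ and $6$ of $(\beta,\delta)$, for a raw grid of $36$ quadruples. Removing the disconnected cases $\beta=\gamma=0$ (of which there are $3\cdot 3=9$, one per choice of $\alpha,\delta\in\{0,1,2\}$) leaves $27$ connected quadruples. The fixed points of the duality involution are those with $\alpha=\delta$ and $\beta=\gamma$; combined with $\beta+\gamma\ge 1$ and $\alpha+\beta\le 2$, these are the three quadruples $(0,1,1,0),(1,1,1,1),(0,2,2,0)$. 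The standard orbit formula then gives $(27-3)/2+3=15$ duality orbits.

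To split into the two ODE-classes, I would partition the $27$ connected quadruples into the subset $S_1=\{\beta=0\text{ or }\gamma=0\}$ (exactly one zero, by connectedness) and the subset $S_2=\{\beta>0\text{ and }\gamma>0\}$. A direct tally gives $|S_1|=18$ and $|S_2|=9$. No quadruple in $S_1$ is fixed by duality (since a fixed quadruple has $\beta=\gamma$, precluding exactly one of them being zero), so $S_1$ contributes $18/2=9$ orbits, all in the class of NH1. All three duality fixed points lie in $S_2$, which therefore contributes $(9-3)/2+3=6$ orbits, all in the class of NH2. Choosing canonical representatives within each orbit and substituting the multiplicities into the template~\eqref{eq:general2REI} produces the networks of Figure~\ref{fig:2NCNREIV2} and their admissible ODEs in Table~\ref{table:admissible_2NCNREIV2}. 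The main obstacle is purely bookkeeping: picking canonical representatives so the listed $15$ networks match Figure~\ref{fig:2NCNREIV2} without duplication, and collating the corresponding admissible ODEs consistently from~\eqref{eq:general2REI}.
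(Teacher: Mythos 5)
Your proposal is correct and follows essentially the same route as the paper: both enumerate the networks via the multiplicity quadruple $(\alpha,\beta,\gamma,\delta)$ of Proposition~\ref{prop_general_val_2REI} subject to $\alpha+\gamma\le 2$, $\beta+\delta\le 2$, split them into the NH1 and NH2 classes according to whether one or both of $\beta,\gamma$ vanish, and identify dual pairs. The only difference is presentational: where the paper says ``considering all the combinations, up to duality,'' you make the duality involution $(\alpha,\beta,\gamma,\delta)\mapsto(\delta,\gamma,\beta,\alpha)$ and the orbit count $(27-3)/2+3=15$ explicit, which is a welcome tightening but not a different method.
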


\begin{proof}
By Proposition~\ref{prop:2_node_ODE} a connected $2$-node REI network with valence $\leq 2$ is ODE-equivalent either to NH1 or to NH2. Each of NH1 and NH2 is a minimal representative of its ODE-class. Network NH1 has only one arrow, which is excitatory and sent by excitatory node 1 to node 2. In an ODE-equivalent network, node 1 can send two excitatory arrows to node 2.
Also, up to ODE-equivalence, node 1 can have no autoregulation or one or two autoregulation excitatory arrows, and node 2 can have no autoregulation or one autoregulation inhibitory arrow.  Considering all the combinations, with valence up to 2, we find 9 networks that are ODE-equivalent to NH1, namely networks (a)-(i) in Figure~\ref{fig:2NCNREIV2}. NH2 has one excitatory arrow, which is sent by the excitatory node 1 to node 2, and one inhibitory arrow, which is sent by the inhibitory node 2 to node 1. 
Up to ODE-equivalence,  node 1 can receive one more inhibitory arrow from node 2 or one autoregulation excitatory arrow.
 Also, up to ODE-equivalence, node 2 can receive one more excitatory arrow from node 1 or one autoregulation inhibitory arrow.
 Considering all the combinations,  up to duality, we find 6 networks that are ODE-equivalent to NH2, namely networks (j)-(o) in Figure~\ref{fig:2NCNREIV2}.  Table~\ref{table:admissible_2NCNREIV2} lists their admissible ODEs. 
\end{proof}

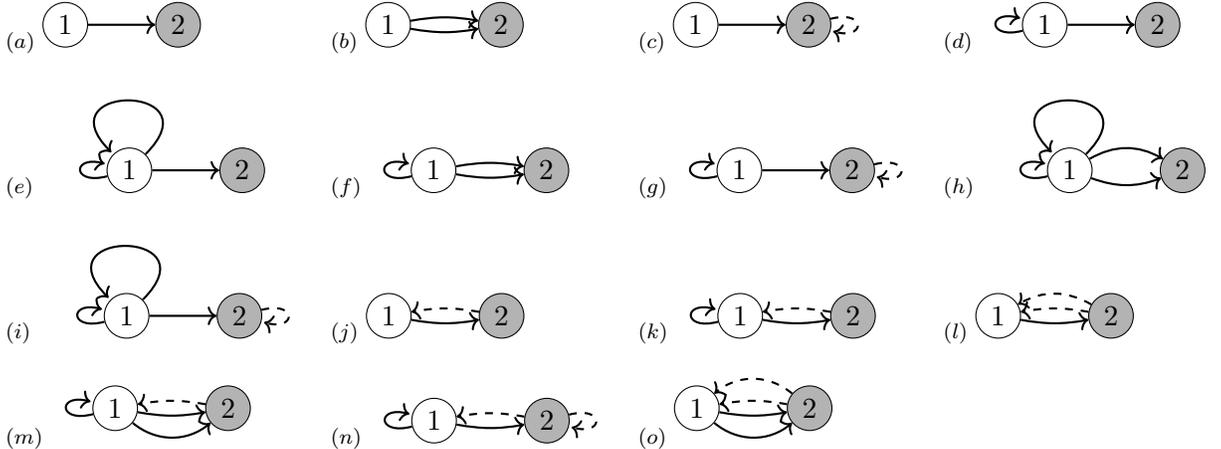
\begin{figure}[!h]
{\tiny 
\begin{center}
\begin{tabular}{llll} 
$(a)$ \begin{tikzpicture}
 [scale=.15,auto=left, node distance=1.5cm, 
 ]
 \node[fill=white,style={circle,draw}] (n1) at (4,0) {\small{1}};
  \node[fill=black!30,style={circle,draw}] (n2) at (14,0) {\small{2}};
 \draw[->, thick] (n1) edge[thick] node {}  (n2); 
 \end{tikzpicture}  
 &  
$(b)$ \begin{tikzpicture}
 [scale=.15,auto=left, node distance=1.5cm, 
 ]
 \node[fill=white,style={circle,draw}] (n1) at (4,0) {\small{1}};
  \node[fill=black!30,style={circle,draw}] (n2) at (14,0) {\small{2}};
 \draw[->, thick] (n1) edge[bend right=10, thick] node {}  (n2); 
 \draw[->, thick] (n1) edge[bend right=-10,thick] node {}  (n2); 
 \end{tikzpicture}  
  &  
$(c)$ \begin{tikzpicture}
 [scale=.15,auto=left, node distance=1.5cm, 
 ]
 \node[fill=white,style={circle,draw}] (n1) at (4,0) {\small{1}};
  \node[fill=black!30,style={circle,draw}] (n2) at (14,0) {\small{2}};
\path
        (n1) [->,solid, thick]  edge[thick] node { } (n2);
\path        
        (n2) [->,dashed, thick]  edge[loop right=90,thick] node { } (n2);
 \end{tikzpicture} 
  &  
$(d)$ \begin{tikzpicture}
 [scale=.15,auto=left, node distance=1.5cm, 
 ]
 \node[fill=white,style={circle,draw}] (n1) at (4,0) {\small{1}};
  \node[fill=black!30,style={circle,draw}] (n2) at (14,0) {\small{2}};
\path
        (n1) [->,thick]  edge[thick] node { } (n2)
        (n1) [->,thick]  edge[loop left=90,thick] node { } (n1);
 \end{tikzpicture} \\
 \\   
 $(e)$ \begin{tikzpicture}
 [scale=.15,auto=left, node distance=1.5cm, 
 ]
 \node[fill=white,style={circle,draw}] (n1) at (4,0) {\small{1}};
  \node[fill=black!30,style={circle,draw}] (n2) at (14,0) {\small{2}};
\path
        (n1)  [->]  edge[loop,thick] node {} (n1)
        (n1)  [->]  edge[loop left=90,thick] node {} (n1)
        (n1) [->,thick]  edge[thick] node { } (n2);
 \end{tikzpicture}
  &  
 $(f)$  \begin{tikzpicture}
 [scale=.15,auto=left, node distance=1.5cm, 
 ]
 \node[fill=white,style={circle,draw}] (n1) at (4,0) {\small{1}};
  \node[fill=black!30,style={circle,draw}] (n2) at (14,0) {\small{2}};
\path
        (n1)  [->]  edge[loop left=90,thick] node {} (n1)
        (n1) [->,thick]  edge[bend right=10, thick] node { } (n2)
        (n1) [->,thick]  edge[bend right=-10, thick] node { } (n2);
 \end{tikzpicture}
&
$(g)$ \begin{tikzpicture}
 [scale=.15,auto=left, node distance=1.5cm, 
 ]
 \node[fill=white,style={circle,draw}] (n1) at (4,0) {\small{1}};
  \node[fill=black!30,style={circle,draw}] (n2) at (14,0) {\small{2}};
\path
        (n1) [->,thick]  edge[thick] node { } (n2)
         (n1) [->,thick]  edge[loop left=90, thick] node { } (n1);
\path 
        (n2) [->,dashed, thick]  edge[loop right=90, thick] node { } (n2);
 \end{tikzpicture} 
& 
 $(h)$ 
  \begin{tikzpicture}
 [scale=.15,auto=left, node distance=1.5cm, 
 ]
 \node[fill=white,style={circle,draw}] (n1) at (4,0) {\small{1}};
  \node[fill=black!30,style={circle,draw}] (n2) at (14,0) {\small{2}};
\path
        (n1)  [->]  edge[loop,thick] node {} (n1)
        (n1)  [->]  edge[loop left=90,thick] node {} (n1)
        (n1) [->,thick]  edge[bend right=20, thick] node { } (n2)
        (n1) [->,thick]  edge[bend right=-30, thick] node { } (n2);
 \end{tikzpicture}\\
\\
 $(i)$  \begin{tikzpicture}
 [scale=.15,auto=left, node distance=1.5cm, 
 ]
 \node[fill=white,style={circle,draw}] (n1) at (4,0) {\small{1}};
  \node[fill=black!30,style={circle,draw}] (n2) at (14,0) {\small{2}};
\path
        (n1)  [->]  edge[loop,thick] node {} (n1)
        (n1)  [->]  edge[loop left=90,thick] node {} (n1)
        (n1) [->,thick]  edge[thick] node { } (n2);
\path 
         (n2)  [->, dashed]  edge[loop right=90,thick] node {} (n2); 
 \end{tikzpicture}
&
$(j)$ \begin{tikzpicture}
 [scale=.15,auto=left, node distance=1.5cm, 
 ]
 \node[fill=white,style={circle,draw}] (n1) at (4,0) {\small{1}};
  \node[fill=black!30,style={circle,draw}] (n2) at (14,0) {\small{2}};
\path
        (n1) [->,thick]  edge[bend right=10] node { } (n2);
 \path 
        (n2) [->,dashed, thick]  edge[bend left=-10] node { } (n1);
 \end{tikzpicture}
&
 $(k)$ \begin{tikzpicture}
 [scale=.15,auto=left, node distance=1.5cm, 
 ]
 \node[fill=white,style={circle,draw}] (n1) at (4,0) {\small{1}};
  \node[fill=black!30,style={circle,draw}] (n2) at (14,0) {\small{2}};
\path
        (n1)  [->]  edge[loop left=90,thick] node {} (n1)
        (n1) [->,thick]  edge[bend right=10, thick] node { } (n2);
\path 
        (n2) [->,dashed, thick]  edge[bend left=-10] node { } (n1);
 \end{tikzpicture}
  &   
 $(l)$ \begin{tikzpicture}
 [scale=.15,auto=left, node distance=1.5cm, 
 ]
 \node[fill=white,style={circle,draw}] (n1) at (4,0) {\small{1}};
  \node[fill=black!30,style={circle,draw}] (n2) at (14,0) {\small{2}};
\path
         (n1) [->,thick]  edge[bend right=10, thick] node { } (n2);
\path 
        (n2) [->,thick, dashed]  edge[bend left =-10] node { } (n1)
        (n2) [->,thick, dashed]  edge[bend left=-30] node { } (n1);
 \end{tikzpicture}\\
 \\
 $(m)$   \begin{tikzpicture}
 [scale=.15,auto=left, node distance=1.5cm, 
 ]
 \node[fill=white,style={circle,draw}] (n1) at (4,0) {\small{1}};
  \node[fill=black!30,style={circle,draw}] (n2) at (14,0) {\small{2}};
\path
        (n1)  [->]  edge[loop left=90,thick] node {} (n1)
        (n1) [->,thick]  edge[bend right=10, thick] node { } (n2)
         (n1) [->,thick]  edge[bend right=40, thick] node { } (n2);
\path 
        (n2) [->,thick, dashed]  edge[bend left=-10, thick] node { } (n1);      
 \end{tikzpicture}
 &  
 $(n)$  \begin{tikzpicture}
 [scale=.15,auto=left, node distance=1.5cm, 
 ]
 \node[fill=white,style={circle,draw}] (n1) at (4,0) {\small{1}};
  \node[fill=black!30,style={circle,draw}] (n2) at (14,0) {\small{2}};
\path
        (n1)  [->]  edge[loop left=90,thick] node {} (n1)
         (n1) [->,thick]  edge[bend right=10] node { } (n2);
\path          
         (n2)  [->, dashed]  edge[loop right=90,thick] node {} (n2)       
        (n2) [->,thick, dashed]  edge[bend left=-10] node { } (n1);
 \end{tikzpicture} 
 & 
 $(o)$ \begin{tikzpicture}
 [scale=.15,auto=left, node distance=1.5cm, 
 ]
 \node[fill=white,style={circle,draw}] (n1) at (4,0) {\small{1}};
  \node[fill=black!30,style={circle,draw}] (n2) at (14,0) {\small{2}};
\path
        (n1)  [->]  edge[bend right=10,thick] node {} (n2)
         (n1) [->,thick]  edge[bend right=40, thick] node { } (n2);
 \path 
         (n2)  [->,dashed]  edge[bend left=-10,thick] node {} (n1)      
        (n2) [->,thick,dashed]  edge[bend left=-40, thick] node { } (n1);
 \end{tikzpicture} 
\end{tabular}
\end{center}
}
\caption{Connected 2-node REI networks with input valence $\leq 2$. 
Networks (a)-(i) are in the ODE-class of network NH1 in Figure~\ref{fig:ODE_general_REIV} and networks (j)-(o) are in the ODE-class of network NH2 in Figure~\ref{fig:ODE_general_REIV}.
}
\label{fig:2NCNREIV2}
\end{figure}

\begin{table}
{\tiny 
\begin{tabular}{|l|l|l|}
\hline 
 & & \\
(a) 
$\begin{array}{l}
\dot{x}_1 = f(x^+_1) \\
\dot{x}_2 = g(x^-_2; x^+_1)
\end{array}$
& 
(b) 
$ \begin{array}{l}
\dot{x}_1 = f(x^+_1) \\
\dot{x}_2 = g(x^-_2; \overline{x^+_1,x^+_1})
\end{array}$ 
& 
(c) 
$
\begin{array}{l}
\dot{x}_1 = f(x^+_1) \\
\dot{x}_2 = g(x^-_2; x^+_1;x^-_2)
\end{array}$ \\ 
 & & \\
\hline
 & & \\
(d) 
$\begin{array}{l}
\dot{x}_1 = f(x^ +_1; x^+_1) \\
\dot{x}_2 = g(x^-_2; x^+_1)
\end{array}$ 
 & 
(e) 
$ \begin{array}{l}
\dot{x}_1 = f(x^+_1; \overline{x^+_1,x^+_1}) \\
\dot{x}_2 = g(x^-_2; x^+_1)
\end{array}$ 
& 
(f) 
$\begin{array}{l}
\dot{x}_1 = f(x^+_1; x^+_1); \\
\dot{x}_2 = g(x^-_2; \overline{x^+_1, x^+_1})
\end{array}$ \\
 & & \\
\hline
 & & \\
(g) 
$\begin{array}{l}
\dot{x}_1 = f(x^+_1; x^+_1) \\
\dot{x}_2 = g(x^-_2; x^+_1; x^-_2)
\end{array}$ 
& 
(h) 
$\begin{array}{l}
\dot{x}_1 = f(x^+_1; \overline{x^+_1, x^+_1}) \\
\dot{x}_2 = g(x^-_2; \overline{x^+_1, x^+_1})
\end{array}$
& 
(i) 
$\begin{array}{l}
\dot{x}_1 = f(x^+_1; \overline{x^+_1, x^+_1}) \\
\dot{x}_2 = g(x^-_2; x^+_1; x^-_2)
\end{array}$ \\
 & & \\
\hline
 & & \\
(j) 
$\begin{array}{l}
\dot{x}_1 = f(x^+_1; x^-_2) \\
\dot{x}_2 = g(x^-_2; x^+_1)
\end{array}$ 
& 
(k) 
$\begin{array}{l}
\dot{x}_1 = f(x^+_1; x^+_1; x^-_2) \\
\dot{x}_2 = g(x^-_2; x^+_1)
\end{array}$ 
& 
(l) $
\begin{array}{l}
\dot{x}_1 = f(x^+_1; \overline{x^-_2,x^-_2}) \\
\dot{x}_2 = g(x^-_2; x^+_1)
\end{array}$ 
\\
 & & \\
\hline
 & & \\  
(m) $\begin{array}{l}
\dot{x}_1 = f(x^+_1; x^+_1; x^-_2) \\
\dot{x}_2 = g(x^-_2; \overline{x^+_1,x^+_1})
\end{array}$ 
& 
(n) 
$ \begin{array}{l}
\dot{x}_1 = f(x^+_1; x^+_1; x^-_2) \\
\dot{x}_2 = g(x^-_2; x^+_1; x^-_2)
\end{array}$ 
& 
(o) 
$
\begin{array}{l}
\dot{x}_1 = f(x^+_1; \overline{x^-_2, x^-_2}) \\
\dot{x}_2 = g(x^-_2; \overline{x^+_1, x^+_1})
\end{array}$ \\
 & & \\
\hline  
\end{tabular}
}
\vspace{.2cm}
\caption{
Admissible ODEs for the networks in Figure~\ref{fig:2NCNREIV2}. Since all networks 
 are inhomogeneous, the admissible ODEs are determined by two functions $f$ and $g$. 
} 
\label{table:admissible_2NCNREIV2}
\end{table}

\begin{rems} 

(i) The networks of the ODE-class of NH1 
are feedforward; the networks of the ODE-class of NH2 
are transitive.

(ii) The Smolen network, 
which is network $(n)$ of Figure~\ref{fig:2NCNREIV2}, belongs to the ODE-class of NH2.
There are no ${\mathbb Z}_2$-symmetric networks in this classification.

(iii) For any choice of node state spaces, the space of admissible maps for the network of NH1 
is strictly contained in the  space of admissible maps for the network NH2. 
\hfill $\Diamond$
\end{rems}

\subsection{Connected 2-node PEI Networks} 
\label{S:C2PEI}

A $2$-node PEI network is the network on the left of Figure~\ref{fig:generalREI}, considering the two nodes to be of the same type, for a suitable choice of nonnegative integer arrow multiplicities $\alpha$, $\beta$, $\gamma$, $\delta$. 
See~Figure~\ref{fig:generalPEI}.

\begin{figure}
\begin{tikzpicture}
 [scale=.15,auto=left, node distance=2cm
 ]
 \node[fill=white,style={circle,draw}] (n1) at (4,0) {\small{1}};
  \node[fill=white,style={circle,draw}] (n2) at (20,0) {\small{2}};
  \path
   (7,1)  [->] edge[thick] node {$\beta$}  (17,1)
 (n1)  [line width=1pt,->]  edge[loop left=90,thick] node {$\alpha$} (n1);
 \path 
 (17,-1)  [dashed,->] edge[thick] node {$\gamma$}  (7,-1)
 (n2)  [dashed,->]  edge[loop right=90,thick] node {$\delta$} (n2); 
 \end{tikzpicture} 
  \caption{A $2$-node PEI network. The nonnegative integer arrow multiplicities are $\alpha, \beta, \gamma, \delta$. The network is connected when one of $\beta$ or $\gamma$ is nonzero.} \label{fig:generalPEI}
\end{figure}
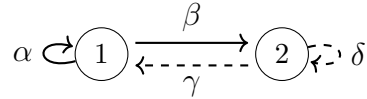

There is  an analogous result to Proposition~\ref{prop_general_val_2REI} for REI networks,
 with the same proof.

\begin{prop} \label{prop_general_val_2REI_dup}
A $2$-node connected PEI network is the network of Figure~{\rm \ref{fig:generalPEI}}, for some choice of nonnegative integer arrow multiplicities $\alpha, \beta, \gamma, \delta$, where one of $\beta$ or $\gamma$ is nonzero. All the statements of Proposition~{\rm \ref{prop_general_val_2REI}} hold, with the additional condition that  the two nodes have the same node-type. 
When the multiplicities satisfy  $\alpha = \beta,\ \gamma = \delta$ then the network is homogeneous and {\rm (\ref{eq:general2REI})}
holds with $f=g$. 
\end{prop}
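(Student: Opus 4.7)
The plan is to follow the proof of Proposition~\ref{prop_general_val_2REI} almost verbatim, replacing the two distinct node-types by a single common node-type and tracking the consequences for admissibility and input equivalence. First I would observe that the PEI conditions of Definition~\ref{def:EIN} still force each node to output arrows of only one arrow-type. So, up to duality and renumbering, node~$1$ outputs only type $A^E$ arrows and node~$2$ outputs only type $A^I$ arrows, giving exactly the skeleton of Figure~\ref{fig:generalPEI}, with nonnegative integer multiplicities $\alpha,\beta,\gamma,\delta$ for the loops and cross arrows, and connectedness enforced by requiring $\beta\neq 0$ or $\gamma\neq 0$. Since both nodes now share a common state space and a common node-type, the $\pm$ superscripts on state variables in \eqref{eq:general2REI} collapse to a single unsigned symbol, but the multiset structure of the inputs is unchanged.

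Second, the derivation of the linearization \eqref{eq:lin_REI} goes through with no modification. The symmetry of $f$ in the block of $\alpha$ overlined excitatory-input variables and the block of $\gamma$ overlined inhibitory-input variables (and similarly for $g$) yields the same collapsed expressions \eqref{eq:notation_REI} for the partial derivatives, and summing the repeated entries in each block produces the same $2k\times 2k$ Jacobian. The only point worth recording is that, in the PEI case, the two node-type adjacency matrices $A_1,A_2$ from \eqref{E:adj_mx_4types} merge into a single node-type matrix (the identity), while the arrow-type adjacency matrices $A_3,A_4$ are unchanged.

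Finally, for the homogeneity claim I would apply Definition~\ref{Def:input_equiv}(b) directly. With node-types identified, nodes~$1$ and~$2$ are input equivalent precisely when $\#I^E(1)=\#I^E(2)$ and $\#I^I(1)=\#I^I(2)$. Reading off the multiplicities from Figure~\ref{fig:generalPEI}, these cardinalities are $\alpha$ and $\beta$ (excitatory inputs at nodes~$1$ and~$2$) and $\gamma$ and $\delta$ (inhibitory inputs at nodes~$1$ and~$2$). Hence input equivalence of the two nodes is exactly $\alpha=\beta$ and $\gamma=\delta$, and the admissibility requirement that input-equivalent nodes be governed by the same component function then forces $f=g$ in \eqref{eq:general2REI}, producing the homogeneous form.

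I do not anticipate any substantive obstacle: the content is essentially a translation of the REI computation into the PEI setting, and the only genuinely new ingredient is the input-equivalence check at the end. The most delicate point is simply making explicit that PEI retains the restriction ``each node outputs only one arrow-type''; this is what allows the arrow-type partition, and hence the adjacency structure and the symmetry arguments in the Jacobian, to be imported verbatim from the REI case.
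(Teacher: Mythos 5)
Your proposal is correct and follows exactly the route the paper intends: the paper states this proposition "with the same proof" as Proposition~\ref{prop_general_val_2REI}, and your verbatim transfer of the adjacency/linearization argument (with $A_1,A_2$ merging into $\id_2$) together with the explicit check that input equivalence of the two nodes amounts to $\alpha=\beta$, $\gamma=\delta$, forcing $f=g$, is precisely the intended justification of the added homogeneity clause. The only quibble is notational: in the paper's convention the $\pm$ superscripts are dropped only on the internal state variable, while the input arguments retain them to record arrow-type; this does not affect the argument.
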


\subsection{Connected 2-node PEI Networks: ODE-classes}
\label{S:C2PEINODE}

The classification of PEI networks into ODE-classes is different from that of REI networks since a 2-node PEI network, see Figure~\ref{fig:generalPEI}, has 
three network  adjacency matrices: 
\begin{equation}
\label{E:adj_PEI_3types}
\begin{array}{ll} 
\mbox{Node-type $N^E = N^I$: } 
\id_2 = 
\left[
\begin{array}{cc}
1 & 0 \\
0 & 1 
\end{array}
\right];  &  \\ 
\ \\
\mbox{Arrow-type $A^E$: } 
A_3 = 
\left[
\begin{array}{cc}
\alpha & 0 \\
\beta & 0 
\end{array}
\right]; \quad 
\mbox{Arrow-type $A^I$: } 
A_4= 
\left[
\begin{array}{cc}
0& \gamma \\
0 & \delta 
\end{array}
\right];
\end{array}
\end{equation}
where at least $\beta$ or $\gamma$ is nonzero to obtain a connected network. Therefore
there are infinitely  many distinct ODE-classes for 2-node PEI networks:
 
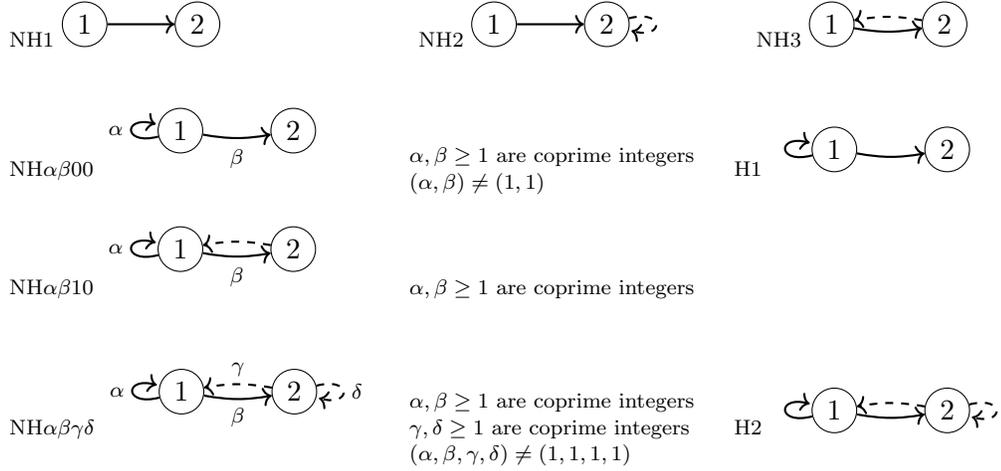
\begin{figure}[!ht]
\begin{center}
{\tiny 
\begin{tabular}{lll} 
NH1 \begin{tikzpicture}
 [scale=.15,auto=left, node distance=1.5cm, 
 ]
 \node[fill=white,style={circle,draw}] (n1) at (4,0) {\small{1}};
  \node[fill=white,style={circle,draw}] (n2) at (14,0) {\small{2}};
 \draw[->, thick] (n1) edge[thick] node {}  (n2); 
 \end{tikzpicture}  
\quad & \quad 
NH2  \begin{tikzpicture}
 [scale=.15,auto=left, node distance=1.5cm, 
 ]
 \node[fill=white,style={circle,draw}] (n1) at (4,0) {\small{1}};
  \node[fill=white,style={circle,draw}] (n2) at (14,0) {\small{2}};
 \draw[->, thick] (n1) edge[thick] node {}  (n2); 
 \draw[->, thick, dashed]    (n2)  edge[loop right=90,thick] node {} (n2); 
 \end{tikzpicture} 
 \quad & \quad 
 NH3 \begin{tikzpicture}
 [scale=.15,auto=left, node distance=1.5cm, 
 ]
 \node[fill=white,style={circle,draw}] (n1) at (4,0) {\small{1}};
  \node[fill=white,style={circle,draw}] (n2) at (14,0) {\small{2}};
\path
        (n1) [->,thick]  edge[bend right=10] node { } (n2);
\path 
        (n2) [->,thick,dashed]  edge[bend left=-10] node { } (n1);
 \end{tikzpicture}  \\
 \\
  \\
 NH$\alpha\beta00$  \begin{tikzpicture}
 [scale=.15,auto=left, node distance=1.5cm]
 \node[fill=white,style={circle,draw}] (n1) at (4,0) {\small{1}};
  \node[fill=white,style={circle,draw}] (n2) at (14,0) {\small{2}};
 \draw[->, thick] (n1) edge[bend right=10] node [below=1pt] {{\tiny $\beta$}}  (n2);
 \draw[->, thick]    (n1)  edge[loop left=90,thick] node {{\tiny $\alpha$}}   (n1); 
 \end{tikzpicture} &     
 \begin{tabular}{l} $ \alpha, \beta \ge 1$ are coprime integers   \\
                             $(\alpha,\beta) \not= (1,1)$
 \end{tabular} & 
 H1  \begin{tikzpicture}
 [scale=.15,auto=left, node distance=1.5cm]
 \node[fill=white,style={circle,draw}] (n1) at (4,0) {\small{1}};
  \node[fill=white,style={circle,draw}] (n2) at (14,0) {\small{2}};
 \draw[->, thick] (n1) edge[bend right=10] node [below=1pt] {}  (n2);
 \draw[->, thick]    (n1)  edge[loop left=90,thick] node {}   (n1); 
 \end{tikzpicture}  \\
 \\
 NH$\alpha\beta10$  \begin{tikzpicture}
 [scale=.15,auto=left, node distance=1.5cm]
 \node[fill=white,style={circle,draw}] (n1) at (4,0) {\small{1}};
  \node[fill=white,style={circle,draw}] (n2) at (14,0) {\small{2}};
 \draw[->, thick] (n1) edge[bend right=10] node [below=1pt] {{\tiny $\beta$}}  (n2);
\draw[->, thick, dashed] (n2) edge[bend left=-10] node [above=1pt]  {}  (n1); 
 \draw[->, thick]    (n1)  edge[loop left=90,thick] node {{\tiny $\alpha$}}   (n1); 
 \end{tikzpicture} &     
 \begin{tabular}{l} $ \alpha, \beta \ge 1$ are coprime integers   \\
 \end{tabular} &  \\
 \\
 \\
NH$\alpha\beta\gamma\delta$  \begin{tikzpicture}
 [scale=.15,auto=left, node distance=1.5cm]
 \node[fill=white,style={circle,draw}] (n1) at (4,0) {\small{1}};
  \node[fill=white,style={circle,draw}] (n2) at (14,0) {\small{2}};
 \draw[->, thick] (n1) edge[bend right=10] node [below=0.1pt] {{\tiny $\beta$}}  (n2);
\draw[->, thick, dashed] (n2) edge[bend left=-10] node [above=0.1pt]  {{\tiny $\gamma$}}  (n1); 
\draw[->, thick]    (n1)  edge[loop left=90,thick] node {{\tiny $\alpha$}}   (n1); 
\draw[->, thick, dashed]    (n2)  edge[loop right=-90,thick] node {{\tiny $\delta$}}   (n2); 
 \end{tikzpicture} & 
 \begin{tabular}{l} $\alpha, \beta \geq 1$ are coprime integers \\
  $\gamma, \delta \geq 1$ are coprime integers \\
  $(\alpha, \beta, \gamma, \delta) \not= (1,1,1,1)$
 \end{tabular}  & 
 H2  \begin{tikzpicture}
 [scale=.15,auto=left, node distance=1.5cm]
 \node[fill=white,style={circle,draw}] (n1) at (4,0) {\small{1}};
  \node[fill=white,style={circle,draw}] (n2) at (14,0) {\small{2}};
 \draw[->, thick] (n1) edge[bend right=10] node [below=0.1pt] {}  (n2);
\draw[->, thick, dashed] (n2) edge[bend left=-10] node [above=0.1pt]  {}  (n1); 
\draw[->, thick]    (n1)  edge[loop left=90,thick] node {}   (n1); 
\draw[->, thick, dashed]    (n2)  edge[loop right=-90,thick] node {}   (n2); 
 \end{tikzpicture} 
\end{tabular}}
\end{center}
\caption{Network representatives, of the ODE-classes of  connected $2$-node PEI networks, up to duality. 
}
\label{fig:ODE_general_PEI}
\end{figure}

\begin{table}[!tb]
{\tiny 
\begin{tabular}{|l|l|}
\hline 
 &   \\
NH1\quad $
 \begin{array}{l}
\dot{x}_1 = f(x_1) \\
\dot{x}_2 = g(x_2; x^+_1)
\end{array}
$ & 
NH2 \quad$
\begin{array}{l}
\dot{x}_1 = f(x_1) \\
\dot{x}_2 = g(x_2; x^+_1,x^-_2)
\end{array}
$ \\
 &   \\
\hline 
 &   \\
NH3 \quad$
\begin{array}{l}
\dot{x}_1 = f(x_1; x^-_2) \\
\dot{x}_2 = g(x_2; x^+_1)
\end{array}
$&  
 NH$\alpha\beta 00$ \quad$
 \begin{array}{l}
\dot{x}_1 = f(x_1; \underbrace{\overline{x^+_1, \ldots, x^+_1}}_{ \alpha}) \\
\dot{x}_2 = g(x_2; \underbrace{\overline{x^+_1, \ldots, x^+_1}}_{ \beta} )
\end{array}
$  \\
 &   \\
\hline
 &   \\ 
NH$\alpha\beta 10$ \quad$
 \begin{array}{l}
\dot{x}_1 = f(x_1; \underbrace{\overline{x^+_1, \ldots, x^+_1}}_{ \alpha}, x^-_2) \\
\dot{x}_2 = g(x_2; \underbrace{\overline{x^+_1, \ldots, x^+_1}}_{ \beta} )
\end{array}
$  
&
NH$\alpha\beta\gamma\delta$ \quad
$

\begin{array}{l}
\dot{x}_1 = f(x_1; \underbrace{\overline{x^+_1, \ldots, x^+_1}}_{ \alpha};     \underbrace{\overline{x^-_2, \ldots, x^-_2}}_{\gamma}) \\
\dot{x}_2 = g(x_2; \underbrace{\overline{x^+_1, \ldots, x^+_1}}_{ \beta};     \underbrace{\overline{x^-_2, \ldots, x^-_2}}_{ \delta})
\end{array}
$ \\
 &   \\
\hline  
 &   \\
H1 $

\begin{array}{l}
\dot{x}_1 = f(x_1; x^+_1); \\
\dot{x}_2 = f(x_2; x^+_1)
\end{array}
$ 
& 
H2 $

\begin{array}{l}
\dot{x}_1 = f(x_1; x^+_1,x^-_2); \\
\dot{x}_2 = f(x_2; x^+_1,x^-_2)
\end{array}
$ 
\\
 &   \\
 \hline 
\end{tabular}}
\vspace{.2cm}
\caption{Admissible ODEs for the networks in Figure~\ref{fig:ODE_general_PEI}.}
\label{table:ODE_general_PEI}
\end{table}

\begin{prop} \label{prop:PEI_ODE}
There is an infinity of ODE-classes of connected $2$-node PEI networks, with
 representatives in Figure~{\rm \ref{fig:ODE_general_PEI}} and associated admissible ODEs in Table~{\rm \ref{table:ODE_general_PEI}}.
\end{prop}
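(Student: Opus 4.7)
The plan is to imitate the strategy of Proposition~\ref{prop:2_node_ODE}. By Proposition~\ref{prop_general_val_2REI_dup}, every connected $2$-node PEI network has the form of Figure~\ref{fig:generalPEI} for some nonnegative integer multiplicities $(\alpha,\beta,\gamma,\delta)$ with $\beta+\gamma\geq 1$; its three adjacency matrices are $\id_2$, $A_3=\alpha e_{11}+\beta e_{21}$, $A_4=\gamma e_{12}+\delta e_{22}$, as in \eqref{E:adj_PEI_3types}, where $e_{ij}$ is the standard matrix unit. By Remark~\ref{rem:ODEequiv}(ii), two such networks are ODE-equivalent if and only if the real spans $V(\alpha,\beta,\gamma,\delta)=\langle \id_2, A_3, A_4\rangle$ coincide, up to renumbering of nodes and duality.

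First I would characterise this coincidence by direct linear algebra. Passing to the quotient $M_2(\mathbb{R})/\langle \id_2\rangle$, with basis $[e_{11}],[e_{12}],[e_{21}]$ and relation $[e_{22}]=-[e_{11}]$, the images of the generators are $[A_3]=\alpha[e_{11}]+\beta[e_{21}]$ and $[A_4]=-\delta[e_{11}]+\gamma[e_{12}]$. When both classes are nonzero, their supports lie in different coordinate planes, so they are linearly independent; comparing coefficients shows that $V=V'$ iff $(\alpha',\beta')$ is a positive scalar multiple of $(\alpha,\beta)$ and $(\gamma',\delta')$ is a positive scalar multiple of $(\gamma,\delta)$. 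Consequently within each ODE-class there is a unique minimal integer representative obtained by reducing each nonzero pair to its primitive coprime form (where one coordinate is allowed to vanish).

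Next I would incorporate the duality-plus-renumbering involution, which acts on parameter tuples by $(\alpha,\beta,\gamma,\delta)\mapsto(\delta,\gamma,\beta,\alpha)$, and enumerate minimal representatives case by case according to which of $A_3,A_4$ is zero and the zero-pattern inside each nonzero pair. The essential bookkeeping step is that certain degenerate tuples collapse onto a span already covered; for instance the span of $(\alpha,\beta,0,1)$ with $\alpha,\beta\geq 1$ coincides with that of $\mathrm{NH2}=(0,1,0,1)$. After these reductions the remaining minimal representatives are exactly the families of Figure~\ref{fig:ODE_general_PEI}: the sporadic cases NH1, NH2, NH3 arising from boundary tuples, the infinite families NH$\alpha\beta 00$, NH$\alpha\beta 10$, NH$\alpha\beta\gamma\delta$ parametrised by coprime pairs (with the stated exclusions removing the homogeneous sub-cases), and the two homogeneous representatives H1, H2 arising when $\alpha=\beta$ and $\gamma=\delta$. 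Pairwise inequivalence then follows by comparing spans (distinct coprime pairs give distinct spans) and the homogeneity type; infinitude is immediate since already the family NH$\alpha\beta 00$ is indexed by infinitely many coprime pairs. The admissible ODEs of Table~\ref{table:ODE_general_PEI} are then read off from the network topology using the general PEI form from Proposition~\ref{prop_general_val_2REI_dup}, with the additional constraint $f=g$ in the homogeneous classes.

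The main obstacle I anticipate is the careful bookkeeping of this case analysis: it is easy either to miss a degenerate sub-case whose span coincides with an already-listed representative, or to double-count representatives under the duality-plus-renumbering involution. A systematic split by the ordered pair of positive-rescaling classes of $[A_3]$ and $[A_4]$, together with the standing convention that node~$1$ outputs $A^E$ arrows, is what keeps the enumeration tractable.
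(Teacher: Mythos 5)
Your overall strategy is essentially the paper's: reduce ODE-equivalence to equality of the spans $\langle \id_2, A_3, A_4\rangle$ via Remark~\ref{rem:ODEequiv}(ii), split into cases according to the zero-pattern of $(\alpha,\beta,\gamma,\delta)$, reduce each nonzero pair to coprime form, and quotient by the duality-plus-renumbering involution $(\alpha,\beta,\gamma,\delta)\mapsto(\delta,\gamma,\beta,\alpha)$; your final list of representatives agrees with Figure~\ref{fig:ODE_general_PEI}, and infinitude from the family NH$\alpha\beta 00$ is the right observation.

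However, one intermediate step is false as stated. You claim that whenever $[A_3]$ and $[A_4]$ are both nonzero in $M_2(\R)/\langle\id_2\rangle$, the spans coincide if and only if $(\alpha',\beta')\in\R_{>0}(\alpha,\beta)$ and $(\gamma',\delta')\in\R_{>0}(\gamma,\delta)$, and you deduce that each ODE-class has a unique minimal coprime representative. This fails exactly when $\beta=0$ or $\gamma=0$: for example $(\alpha,\beta,\gamma,\delta)=(1,1,0,1)$ and $(0,1,0,1)$ both have quotient span $\langle[e_{11}],[e_{21}]\rangle$, although $(1,1)$ is not a positive multiple of $(0,1)$. The ``iff'' is correct only when $\beta\geq1$ \emph{and} $\gamma\geq1$, since only then does the $2$-plane $\langle[A_3],[A_4]\rangle$ contain neither coordinate plane, so that intersecting it with each coordinate plane recovers the lines $\R[A_3]$ and $\R[A_4]$ separately. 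Your later remark that the span of $(\alpha,\beta,0,1)$ collapses onto that of NH2 is precisely a counterexample to your own criterion, so the enumeration you then carry out is in fact the correct one; but as written the argument is internally inconsistent, and the criterion must be restricted to the case $\beta,\gamma\geq 1$ (with the degenerate cases handled by direct span computation, as in the paper's four-way case division) before the ``unique minimal coprime representative'' conclusion can legitimately be drawn.
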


\begin{proof}
Given an ODE-class of 2-node PEI networks, consider a network in that class and  the corresponding adjacency matrices $\id_2, A_3$ and $A_4$ as in 
(\ref{E:adj_PEI_3types}). Since the network is connected, we have that at least one of $\beta$, $\gamma$ is nonzero.
The proof divides into four cases according to how many of the integers $\alpha, \beta, \gamma, \delta$ are zero. \\
(i) If all four integers are positive, then the matrices $\id_2, A_3$ and $A_4$ are linearly independent. Moreover, the matrices of the minimal network of that ODE-class are those obtained from $A_3$, multiplied by $1/\mbox{gcd}(\alpha, \beta)$, and  $A_4$, multiplied by $1/\mbox{gcd}(\gamma, \delta)$.
We obtain the ODE-classes of inhomogeneous networks NH$\alpha\beta\gamma\delta$ in Figure~\ref{fig:ODE_general_PEI}, which are
parametrized by the positive integers $\alpha, \beta, \gamma, \delta$, where $\alpha, \beta$ are coprime, $\gamma, \delta$ are coprime, and
  $(\alpha, \beta, \gamma, \delta) \not= (1,1,1,1)$. If $(\alpha, \beta, \gamma, \delta) = (1,1,1,1)$ then we obtain the homogeneous network 
ODE-class H2 of Figure~\ref{fig:ODE_general_PEI}. \\
(ii) If exactly one of the integers $\alpha, \beta, \gamma, \delta$ is zero, then, up to duality, either $\gamma=0$ or $\delta =0$. In the first case,
$$
\left\langle \id_2, 
\left[
\begin{array}{cc}
\alpha    & 0 \\
\beta & 0
\end{array}
\right], 
\left[
\begin{array}{cc}
0 & 0 \\
0 & \delta
\end{array}
\right]
\rangle =
\left\langle \id_2, 
\left[
\begin{array}{cc}
0    & 0 \\
1 & 0
\end{array}
\right], 
\left[
\begin{array}{cc}
0 & 0 \\
0 & 1
\end{array}
\right]
\right\rangle\, \right.
$$
and every such network is ODE-equivalent to network NH2  in Figure~\ref{fig:ODE_general_PEI}. In the second case, 
$$\left\langle \id_2, 
\left[
\begin{array}{cc}
\alpha    & 0 \\
\beta & 0
\end{array}
\right], 
\left[
\begin{array}{cc}
0 & \gamma \\
0 & 0
\end{array}
\right]
\rangle =
\left\langle \id_2, 
\left[
\begin{array}{cc}
\alpha    & 0 \\
\beta & 0
\end{array}
\right], 
\left[
\begin{array}{cc}
0 & 1 \\
0 & 0
\end{array}
\right]
\right\rangle\, \right. .
$$
Now we obtain the ODE-classes NH$\alpha\beta10$ in Figure~\ref{fig:ODE_general_PEI},
which are parametrized by the positive integers $\alpha, \beta$, where $\alpha, \beta$ are coprime. \\
(iii) Suppose that exactly two of the integers $\alpha, \beta, \gamma, \delta$ are zero. Up to duality,  
either $\gamma = \delta = 0$, or $\gamma = \alpha =0$, or $ \delta= \alpha =0$. 
If $\gamma = \delta = 0$ and 
$(\alpha, \beta) \neq (1,1)$ we obtain the ODE-classes NH$\alpha\beta 00$  in Figure~\ref{fig:ODE_general_PEI}, which are
parametrized by the positive integers $\alpha, \beta$ where $\alpha, \beta$ are coprime. 
If  $\gamma = \delta = 0$ and $(\alpha, \beta) = (1,1)$, we obtain the ODE-class containing the homogeneous network 
H1 in Figure~\ref{fig:ODE_general_PEI}.  If $\gamma = \alpha =0$, then since
$$
\left\langle \id_2, 
\left[
\begin{array}{cc}
0    & 0 \\
\beta & 0
\end{array}
\right], 
\left[
\begin{array}{cc}
0 & 0 \\
0 & \delta
\end{array}
\right]
\right\rangle =
\left\langle \id_2, 
\left[
\begin{array}{cc}
0   & 0 \\
1 & 0
\end{array}
\right], 
\left[
\begin{array}{cc}
0 & 0 \\
0 & 1
\end{array}
\right]
\right\rangle, 
$$
we have networks in the ODE-class of network NH2  in Figure~\ref{fig:ODE_general_PEI}.  

Similarly, if  $ \delta= \alpha =0$, then since
$$
\left\langle \id_2, 
\left[
\begin{array}{cc}
0    & 0 \\
\beta & 0
\end{array}
\right], 
\left[
\begin{array}{cc}
0 & \gamma \\
0 & 0
\end{array}
\right]
\right\rangle =
\left\langle \id_2, 
\left[
\begin{array}{cc}
0   & 0 \\
1 & 0
\end{array}
\right], 
\left[
\begin{array}{cc}
0 & 1 \\
0 & 0
\end{array}
\right]
\right\rangle, 
$$
we have networks at the ODE-class of network NH3  in Figure~\ref{fig:ODE_general_PEI}.  \\
(iv)  Suppose that exactly three of the integers $\alpha, \beta, \gamma, \delta$ are zero. Up to duality, we can consider 
only the cases where $\delta=\gamma=0$ and $\alpha =0$, since the networks are connected. Now 
$$
\left\langle \id_2, 
\left[
\begin{array}{cc}
0    & 0 \\
\beta & 0
\end{array}
\right]
\right\rangle =
\left\langle \id_2, 
\left[
\begin{array}{cc}
0   & 0 \\
1 & 0
\end{array}
\right]
\right\rangle, 
$$
so we have networks in the ODE-class of network NH1 in Figure~\ref{fig:ODE_general_PEI}. 
\end{proof}

\subsection{Connected 2-node PEI Networks with valence up to 2}
\label{S:C2PEINV2}

\begin{prop} 

The set of $2$-node connected PEI networks with node input valence up to $2$ contains $15$ networks, which correspond to the networks in Figure~{\rm \ref{fig:2NCNREIV2}} but assuming the nodes to be of the same type. This set is partitioned into
$9$ ODE-classes: $7$ classes
formed by inhomogeneous networks and $2$ by homogeneous networks, see Figure~{\rm \ref{fig:ODE_2NCNPEIV2}} and Table~{\rm \ref{table:ODE_2NCNPEIV2}}. 
\label{prop:PEI2}
\end{prop}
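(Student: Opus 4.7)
The plan is to adapt the proof of Proposition~\ref{prop:REI2} to the PEI setting. The enumeration of diagrams is identical to the REI case, since the only change is that the two node-type adjacency matrices of the REI case are replaced by a single copy of $\id_2$; however, this change makes some networks homogeneous and enlarges the set of ODE-equivalences, so the final partition will contain more classes of a richer variety.

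First I would invoke Proposition~\ref{prop_general_val_2REI_dup} to write every connected 2-node PEI network in the form of Figure~\ref{fig:generalPEI} for nonnegative integers $(\alpha,\beta,\gamma,\delta)$ with $\beta+\gamma\ge 1$, $\alpha+\gamma\le 2$ and $\beta+\delta\le 2$. Enumerating these tuples up to duality and renumbering (which together act by $(\alpha,\beta,\gamma,\delta)\mapsto(\delta,\gamma,\beta,\alpha)$) produces the same 15 diagrams as in Figure~\ref{fig:2NCNREIV2}, now with both nodes drawn in a single node-type. Splitting them into homogeneous and inhomogeneous ones then reduces to checking when $(\alpha,\gamma)=(\beta,\delta)$, which by Definition~\ref{Def:input_equiv} characterises input-equivalent nodes; a direct scan shows this equality holds exactly for (d), (h), (n)---tuples $(1,1,0,0)$, $(2,2,0,0)$, $(1,1,1,1)$---leaving $12$ inhomogeneous networks.

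Then I would apply Proposition~\ref{prop:PEI_ODE} to each network, reducing $(\alpha,\beta)$ and $(\gamma,\delta)$ by their greatest common divisors and matching the resulting tuple to a class label from Figure~\ref{fig:ODE_general_PEI}. This yields the partition: NH1 collects (a) and (b); NH2 collects (c), (g), (i); NH3 collects (j), (l), (o); four further inhomogeneous classes of the forms NH$\alpha\beta 00$ or NH$\alpha\beta 10$---with parameter tuples $(2,1,0,0)$, $(1,2,0,0)$, $(1,1,1,0)$, $(1,2,1,0)$---each contain a single network, namely (e), (f), (k), (m) respectively; the homogeneous class H1 collects (d) and (h); and H2 contains (n). This gives $7$ inhomogeneous and $2$ homogeneous ODE-classes, for a total of $9$, as claimed. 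Figure~\ref{fig:ODE_2NCNPEIV2} and Table~\ref{table:ODE_2NCNPEIV2} are then obtained by choosing a representative per class and specialising~\eqref{eq:general2REI}, with $f=g$ imposed in the homogeneous cases H1 and H2.

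The main obstacle I expect is confirming that networks of quite different appearance really belong to the same ODE-class---most visibly, (c), (g), and (i) all collapsing into NH2, and the four-arrow network (h) collapsing onto the two-arrow (d) inside H1. These coincidences hinge on the presence of $\id_2$ in the adjacency-matrix span, which allows one to trade diagonal contributions between the $A^E$ and $A^I$ adjacency matrices; Proposition~\ref{prop:PEI_ODE} already packages all such reductions, so what remains for the proof is only a finite case check on the 15 tuples, supplemented by a direct span computation as a fall-back verification.
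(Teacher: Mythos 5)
Your proposal is correct and follows essentially the same route as the paper: obtain the 15 networks from the REI list of Figure~\ref{fig:2NCNREIV2} by identifying the node-types, then determine the ODE-classes via the span of $\{\id_2,A_3,A_4\}$ (the paper cites \cite{DS05} directly where you route the computation through Proposition~\ref{prop:PEI_ODE}, which amounts to the same thing). Your resulting partition --- $\{(a),(b)\}$, $\{(c),(g),(i)\}$, $\{(j),(l),(o)\}$, $\{(d),(h)\}$, the singletons $(e),(f),(k),(m)$, and $\{(n)\}$ --- agrees exactly with the paper's, and your explicit parameter tuples and homogeneity check are a welcome addition to the paper's rather terse argument.
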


\begin{proof}
Trivially, the $2$-node connected PEI networks with node input valence up to 2 can be obtained from the $2$-node connected REI networks with node input valence up to 2 (see Figure~\ref{fig:2NCNREIV2}) by considering the nodes to have the same type.
The main result of Dias and Stewart~\cite{DS05} on ODE-equivalence implies that 
there are 
9
ODE-classes of PEI networks: a class containing networks $(a)$ and $(b)$, a class containing networks $(d)$ and $(h)$, a class containing networks $(j)$, $(l)$ and $(o)$, 
a class containing networks $(c)$, $(g)$ and $(i)$.
Each of the remaining
five
networks represents a different ODE-class. See Figure~\ref{fig:ODE_2NCNPEIV2} for  minimal ODE-class representatives. 
Table~\ref{table:ODE_2NCNPEIV2} lists the admissible maps for the networks in Figure~\ref{fig:ODE_2NCNPEIV2}. 
\end{proof}

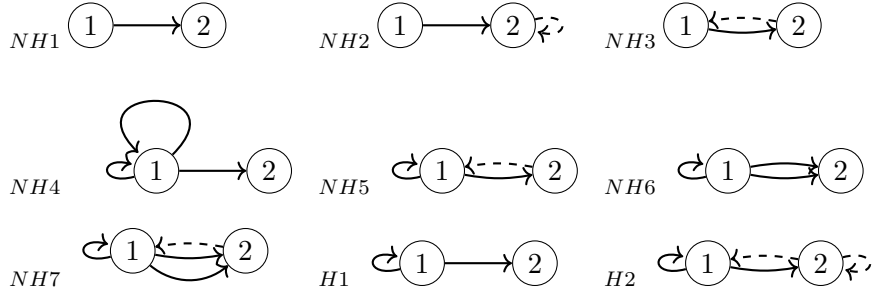
\begin{figure}[!ht]
\tiny{
\begin{center}
\begin{tabular}{lll} 
$NH1$ \begin{tikzpicture}
 [scale=.15,auto=left, node distance=1.5cm, 
 ]
 \node[fill=white,style={circle,draw}] (n1) at (4,0) {\small{1}};
  \node[fill=white,style={circle,draw}] (n2) at (14,0) {\small{2}};
 \draw[->, thick] (n1) edge[thick] node {}  (n2); 
 \end{tikzpicture}  
 &  
$NH2$ \begin{tikzpicture}
 [scale=.15,auto=left, node distance=1.5cm, 
 ]
 \node[fill=white,style={circle,draw}] (n1) at (4,0) {\small{1}};
  \node[fill=white,style={circle,draw}] (n2) at (14,0) {\small{2}};
\path
        (n1) [->,solid, thick]  edge[thick] node { } (n2);
\path        
        (n2) [->,dashed, thick]  edge[loop right=90,thick] node { } (n2);
 \end{tikzpicture} 
  &  
$NH3$ \begin{tikzpicture}
 [scale=.15,auto=left, node distance=1.5cm, 
 ]
 \node[fill=white,style={circle,draw}] (n1) at (4,0) {\small{1}};
  \node[fill=white,style={circle,draw}] (n2) at (14,0) {\small{2}};
\path
        (n1) [->,thick]  edge[bend right=10] node { } (n2);
 \path 
        (n2) [->,dashed, thick]  edge[bend left=-10] node { } (n1);
 \end{tikzpicture} \\
\\
 $NH4$ \begin{tikzpicture}
 [scale=.15,auto=left, node distance=1.5cm, 
 ]
 \node[fill=white,style={circle,draw}] (n1) at (4,0) {\small{1}};
  \node[fill=white,style={circle,draw}] (n2) at (14,0) {\small{2}};
\path
        (n1)  [->]  edge[loop,thick] node {} (n1)
        (n1)  [->]  edge[loop left=90,thick] node {} (n1)
        (n1) [->,thick]  edge[thick] node { } (n2);
 \end{tikzpicture}
  &  
 $NH5$ \begin{tikzpicture}
 [scale=.15,auto=left, node distance=1.5cm, 
 ]
 \node[fill=white,style={circle,draw}] (n1) at (4,0) {\small{1}};
  \node[fill=white,style={circle,draw}] (n2) at (14,0) {\small{2}};
\path
        (n1)  [->]  edge[loop left=90,thick] node {} (n1)
        (n1) [->,thick]  edge[bend right=10, thick] node { } (n2);
\path 
        (n2) [->,dashed, thick]  edge[bend left=-10] node { } (n1);
 \end{tikzpicture}
&   
 $NH6$  \begin{tikzpicture}
 [scale=.15,auto=left, node distance=1.5cm, 
 ]
 \node[fill=white,style={circle,draw}] (n1) at (4,0) {\small{1}};
  \node[fill=white,style={circle,draw}] (n2) at (14,0) {\small{2}};
\path
        (n1)  [->]  edge[loop left=90,thick] node {} (n1)
        (n1) [->,thick]  edge[bend right=10, thick] node { } (n2)
        (n1) [->,thick]  edge[bend right=-10, thick] node { } (n2);
 \end{tikzpicture}\\
 \\  
 $NH7$  \begin{tikzpicture}
 [scale=.15,auto=left, node distance=1.5cm, 
 ]
 \node[fill=white,style={circle,draw}] (n1) at (4,0) {\small{1}};
  \node[fill=white,style={circle,draw}] (n2) at (14,0) {\small{2}};
\path
        (n1)  [->]  edge[loop left=90,thick] node {} (n1)
        (n1) [->,thick]  edge[bend right=10, thick] node { } (n2)
         (n1) [->,thick]  edge[bend right=40, thick] node { } (n2);
\path 
        (n2) [->,thick, dashed]  edge[bend left=-10, thick] node { } (n1);      
 \end{tikzpicture} & 
$H1$ \begin{tikzpicture}
 [scale=.15,auto=left, node distance=1.5cm, 
 ]
 \node[fill=white,style={circle,draw}] (n1) at (4,0) {\small{1}};
  \node[fill=white,style={circle,draw}] (n2) at (14,0) {\small{2}};
\path
        (n1) [->,thick]  edge[thick] node { } (n2)
        (n1) [->,thick]  edge[loop left=90,thick] node { } (n1);
 \end{tikzpicture}
 &  
 $H2$  \begin{tikzpicture}
 [scale=.15,auto=left, node distance=1.5cm, 
 ]
 \node[fill=white,style={circle,draw}] (n1) at (4,0) {\small{1}};
  \node[fill=white,style={circle,draw}] (n2) at (14,0) {\small{2}};
\path
        (n1)  [->]  edge[loop left=90,thick] node {} (n1)
         (n1) [->,thick]  edge[bend right=10] node { } (n2);
\path          
         (n2)  [->, dashed]  edge[loop right=90,thick] node {} (n2)       
        (n2) [->,thick, dashed]  edge[bend left=-10] node { } (n1);
 \end{tikzpicture} 
  \end{tabular}
\end{center}
\caption{There are $9$
ODE-classes of $2$-node connected PEI networks with input valence up to two: $7$ inhomogeneous classes and $2$ homogeneous classes.
Table~\ref{table:ODE_2NCNPEIV2} lists the corresponding admissible maps.}
\label{fig:ODE_2NCNPEIV2}
}
\end{figure}

\begin{table}
\tiny{
\begin{tabular}{|l|l|l|}
\hline 
& &  \\
NH1 $ 
\begin{array}{l}
\dot{x}_1 = f(x_1); \\
\dot{x}_2 = g(x_2; x^+_1)
\end{array}
$ & 
NH2 $
\begin{array}{l}
\dot{x}_1 = f(x_1); \\
\dot{x}_2 = g(x_2; x^+_1,x^-_2)
\end{array}
$ 
& 
NH3 $
\begin{array}{l}
\dot{x}_1 = f(x_1; x^-_2); \\
\dot{x}_2 = g(x_2; x^+_1)
\end{array}
$
 \\
 & & \\
\hline 
& &  \\
NH4 $
\begin{array}{l}
\dot{x}_1 = f(x_1; \overline{x^+_1,x^+_1}); \\
\dot{x}_2 = g(x_2; x^+_1)
\end{array}
$ 
& 
NH5 $
\begin{array}{l}
\dot{x}_1 = f(x_1; x^+_1, x^-_2); \\
\dot{x}_2 = g(x_2; x^+_1)
\end{array}
$ &
NH6 $
\begin{array}{l}
\dot{x}_1 = f(x_1; x^+_1); \\
\dot{x}_2 = g(x_2; \overline{x^+_1, x^+_1})
\end{array}
$ 
\\
 & &  \\
\hline  
& &   \\
NH7 $
\begin{array}{l}
\dot{x}_1 = f(x_1; x^+_1, x^-_2); \\
\dot{x}_2 = g(x_2; \overline{x^+_1,x^+_1})
\end{array}
$
& 
H1 $
\begin{array}{l}
\dot{x}_1 = f(x_1; x^+_1); \\
\dot{x}_2 = f(x_2; x^+_1)
\end{array}
$ 
& 
H2 $
\begin{array}{l}
\dot{x}_1 = f(x_1; x^+_1,x^-_2); \\
\dot{x}_2 = f(x_2; x^+_1,x^-_2)
\end{array}
$ 
\\
 & &  \\
\hline  
\end{tabular}
\caption{Admissible maps for the networks in Figure~\ref{fig:ODE_2NCNPEIV2}.} 
\label{table:ODE_2NCNPEIV2}
}
\end{table}

\subsection{Connected 2-node UEI Networks}
\label{S:C2UEI}

We now consider $2$-node UEI networks. 
Here, node $1$ is excitatory and node $2$ is inhibitory. 
To classify these we split the network into two subnetworks, each containing both nodes. 
One subnetwork retains only the
excitatory arrows, the other retains only the inhibitory ones. We classify each
subnetwork separately and reassemble them.

\begin{figure}
\begin{tabular}{ll}
\begin{tikzpicture}
 [scale=.15,auto=left, node distance=2cm
 ]
 \node[fill=white,style={circle,draw}] (n1) at (4,0) {\small{1}};
  \node[fill=black!30,style={circle,draw}] (n2) at (20,0) {\small{2}};
  \path
   (7,1)  [->] edge[thick] node {$\beta_1$}  (17,1)
 (n1)  [line width=1pt,->]  edge[loop left=90,thick] node {$\alpha_1$} (n1);
 \path 
 (17,-1)  [->] edge[thick] node {$\beta_2$}  (7,-1)
 (n2)  [->]  edge[loop right=90,thick] node {$\alpha_2$} (n2); 
 \end{tikzpicture}  & 
 \begin{tikzpicture}
 [scale=.15,auto=left, node distance=2cm
 ]
 \node[fill=white,style={circle,draw}] (n1) at (4,0) {\small{1}};
  \node[fill=black!30,style={circle,draw}] (n2) at (20,0) {\small{2}};
  \path
   (7,1)  [dashed,->] edge[thick] node {$\gamma_1$}  (17,1)
 (n1)  [line width=1pt,->]  edge[loop left=90,thick] node {$\delta_1$} (n1);
 \path 
 (17,-1)  [dashed, ->] edge[thick] node {$\gamma_2$}  (7,-1)
 (n2)  [->]  edge[loop right=90,thick] node {$\delta_2$} (n2); 
  \end{tikzpicture} 
 \end{tabular}
 \caption{A $2$-node UEI network has two $2$-node subnetworks.} \label{fig:generalUEI}
\end{figure}
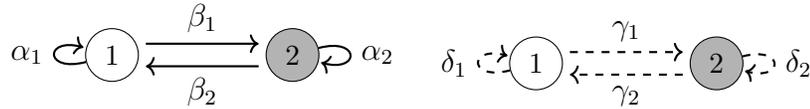

\begin{prop}\label{lem:general_2_node_UEI}
A connected $2$-node UEI network splits into the two subnetworks in  Figure~{\rm \ref{fig:generalUEI}}, for suitable nonnegative integer arrow multiplicities $\alpha_i, \beta_i, \gamma_i, \delta_i$, where $i=1,2$ and at least one of the $\beta_1,\beta_2, \gamma_1, \gamma_2$ is nonzero. The admissible ODEs are:
\begin{equation}
\begin{array}{l}
\dot{x}_1 = f(x^+_1; \overline{ \underbrace{x^+_1, \ldots, x^+_1}_{\alpha_1},\underbrace{x^+_2, \ldots, x^+_2}_{\beta_2} }, 
\overline{ \underbrace{x^-_1, \ldots, x^-_1}_{ \delta_1}, \underbrace{x^-_2, \ldots, x^-_2}_{\gamma_2} }) \\
\\
\dot{x}_2 = g(x^-_2;  \overline{\underbrace{x^+_1, \ldots, x^+_1}_{{ \beta_1}},\underbrace{x^+_2, \ldots, x^+_2}_{\alpha_2}}, \overline{\underbrace{x^-_1, \ldots, x^-_1}_{{ \gamma_1}},\underbrace{x^-_2, \ldots, x^-_2}_{\delta_2}}) \\
\end{array}
\label{eq:general2UEI}
\end{equation}
Here $x^+_1, x^-_2 \in \R^k$, so the total state space is $\R^{2k}$, and $f$ and $g$ 
are smooth functions invariant under permutation of the variables under each overline.  

To simplify notation, temporarily omit the $\pm$ superscripts, and define
\begin{equation} 
\begin{array}{l} 
p =
(
x_1, 
\underbrace{x_1, \ldots, x_1}_{\alpha_1},  
\underbrace{x_2, \ldots, x_2}_{\beta_2},   
\underbrace{x_1, \ldots, x_1}_{\delta_1}, 
\underbrace{x_2, \ldots, x_2}_{\gamma_2}
) \\ 
q= 
(
x_2, 
\underbrace{x_1, \ldots, x_1}_{{\beta_1}}, 
\underbrace{x_2, \ldots, x_2}_{{\alpha_2}}, 
\underbrace{x_1, \ldots, x_1}_{\gamma_1}, 
\underbrace{x_2, \ldots, x_2}_{\delta_2}
)\\
\\
a = \frac{\partial f}{\partial x_1}\left|_{p}\right. \qquad 
b_1 =  \frac{\partial f}{\partial x_2}\left|_{p}\right. \qquad 
b_2 =  \frac{\partial f}{\partial x_{2 + \alpha_1}}\left|_{p}\right. \\
c_1 = \frac{\partial f}{\partial x_{2 + \alpha_1 + \beta_2}}\left|_{p}\right. \qquad 
c_2 = \frac{\partial f}{\partial x_{2 + \alpha_1 + \beta_2 + \delta_1}}\left|_{p}\right. \\
d = \frac{\partial g}{\partial x_1}\left|_{q}\right. \qquad  
e_1 = \frac{\partial g}{\partial x_2}\left|_{q}\right. \qquad 
e_2 = \frac{\partial g}{\partial x_{2 + \beta_1 }}\left|_{q}\right. \\
f_1 = \frac{\partial g}{\partial x_{2 + \beta_1 + \alpha_2}}\left|_{q}\right. \qquad 
f_2 = \frac{\partial g}{\partial x_{2 + \beta_1 + \alpha_2 + \gamma_1}}\left|_{q}\right.\, 
\end{array}
\label{eq:notation_UEI}
\end{equation}
the linearization of \eqref{eq:general2REI} at $(x_1,x_2)$ is the $2k \times 2k$ matrix 
\begin{equation}
\left[
\begin{array}{cc}
a + \alpha_1 b_1 + \delta_1 c_1 & \beta_2 b_2  + \gamma_2 c_2 \\
\beta_1 e_1 +  \gamma_1 f_1 &  d + \alpha_2 e_2 + \delta_2 f_2
\end{array}
\right]\, . 
\label{eq:lin_UEI}
\end{equation}
\end{prop}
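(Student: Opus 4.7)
The plan is to follow the same template as the proof of Proposition \ref{prop_general_val_2REI}, but without the arrow-tail restrictions (conditions (c) and (d) of Definition \ref{def:EIN}). Since node $1$ has type $N^E$ and node $2$ has type $N^I$, the node-type adjacency matrices remain $A_1$ and $A_2$ exactly as in \eqref{E:adj_mx_4types}. The new feature is that each arrow-type adjacency matrix may now have arbitrary nonnegative integers in every position. I would write
\[
A_3 = \left[\begin{array}{cc} \alpha_1 & \beta_2 \\ \beta_1 & \alpha_2 \end{array}\right], \qquad A_4 = \left[\begin{array}{cc} \delta_1 & \gamma_2 \\ \gamma_1 & \delta_2 \end{array}\right],
\]
where $A_3$ counts excitatory arrows and $A_4$ counts inhibitory arrows. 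Decomposing the network by arrow-type produces precisely the two subnetworks of Figure~\ref{fig:generalUEI}. Since there are only two nodes, connectedness is equivalent to the existence of at least one arrow linking them in some direction, i.e.\ at least one of $\beta_1, \beta_2, \gamma_1, \gamma_2$ being nonzero.

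Next, I would deduce the form of the admissible ODEs directly from the general definition in Section~\ref{S:AODE}. For each node, one lists the tail-node multisets of its excitatory and inhibitory input sets. For node $1$ the excitatory input set consists of $\alpha_1$ copies of node $1$ together with $\beta_2$ copies of node $2$, and the inhibitory input set consists of $\delta_1$ copies of node $1$ together with $\gamma_2$ copies of node $2$. For node $2$ the excitatory input set has $\beta_1$ copies of node $1$ and $\alpha_2$ copies of node $2$, and the inhibitory input set has $\gamma_1$ copies of node $1$ and $\delta_2$ copies of node $2$. Writing these multisets out as argument lists, with overlines marking symmetry within each arrow-type block, yields \eqref{eq:general2UEI} immediately.

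For the linearization, the argument is a chain-rule bookkeeping computation. The symmetry of $f$ in its excitatory slots (and separately in its inhibitory slots) forces all partial derivatives with respect to slots in the same overlined block to coincide. Using the notation \eqref{eq:notation_UEI}, the coefficient of $\mathrm{d}x_1$ in the linearization of the first ODE collects the contribution $a$ from the first slot, $\alpha_1 b_1$ from the $\alpha_1$ excitatory self-arrow slots and $\delta_1 c_1$ from the $\delta_1$ inhibitory self-arrow slots; the coefficient of $\mathrm{d}x_2$ collects $\beta_2 b_2$ from the excitatory inputs coming from node $2$ and $\gamma_2 c_2$ from the inhibitory inputs coming from node $2$. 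A symmetric computation for the second ODE delivers the second row of \eqref{eq:lin_UEI}.

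The only real obstacle is notational care: the state variable $x_1$ appears in two different overlined blocks of $f$, via the excitatory self-loops and via the inhibitory self-loops. These must be treated as distinct slot positions when taking partial derivatives, which is precisely what justifies keeping the pair $b_1, c_1$ (and similarly $b_2, c_2$, $e_1, f_1$, $e_2, f_2$) rather than collapsing them, since the symmetry constraint on $f$ identifies partial derivatives only within a single overlined block, not across blocks. With that proviso, the proof is a direct adaptation of the one for Proposition~\ref{prop_general_val_2REI}.
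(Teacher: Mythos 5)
Your proposal is correct and follows essentially the same route as the paper: the paper's proof likewise writes down the node-type matrices $A_1,A_2$ and the unrestricted arrow-type matrices $A_3,A_4$, observes that the network is the union of the two subnetworks of Figure~\ref{fig:generalUEI} with connectedness forcing one of $\beta_1,\beta_2,\gamma_1,\gamma_2$ to be nonzero, and appeals to the symmetry of $f$ and $g$ within each overlined block for the linearization. Your explicit remark that the self-loop variable occupies distinct slots in the excitatory and inhibitory blocks (so $b_1$ and $c_1$ cannot be collapsed) is a correct and welcome elaboration of the step the paper leaves implicit.
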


\begin{proof}
For a UEI network there is no restriction on the tail node-type for $A^E$ arrows and $A^I$ arrows. The adjacency matrices are therefore
\begin{equation}
\begin{array}{ll} 
\mbox{Node-type $N^E$: } 
A_1 = 
\left[
\begin{array}{cc}
1 & 0 \\
0 & 0 
\end{array}
\right]; &  
\mbox{Node-type $N^I$: } 
A_2 = 
\left[
\begin{array}{cc}
0& 0 \\
0 & 1 
\end{array}
\right]; \\ 
\ \\
\mbox{Arrow-type $A^E$: } 
A_3 = 
\left[
\begin{array}{cc}
\alpha_1 & \beta_2 \\
\beta_1 & \alpha_2 
\end{array}
\right];  & 
\mbox{Arrow-type $A^I$: } 
A_4= 
\left[
\begin{array}{cc}
\delta_1& \gamma_2 \\
\gamma_1 & \delta_2
\end{array}
\right];
\end{array}
\label{eq:gen_adj_UEI}
\end{equation} 
where at least one of the $\beta_i$ or $\gamma_i$ is nonzero to guarantee connectedness. The network is the union of two $2$-node subnetworks, one containing the arrows of type $A^E$ and the other of  type $A^I$, which correspond to the networks in Figure~\ref{fig:generalUEI}.

Concerning the linearization of equations (\ref{eq:general2UEI}) at  $(x_1,x_2)$, the symmetries of the functions $f$ and $g$ imply (\ref{eq:lin_UEI}). 
\end{proof}

\subsection{Connected 2-node UEI Networks: ODE-classes}
\label{S:C2UEIODE}

\begin{prop} \label{prop:UEI_ODE_classes}
There is an infinity of ODE-classes of connected $2$-node UEI networks, with
 representatives in Figure~{\rm \ref{fig:ODE_general_UEIV}} and associated admissible ODEs in Table~{\rm \ref{table:ODE_general_UEIV}}. 
\end{prop}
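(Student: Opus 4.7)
The plan is to invoke the criterion of Remark~\ref{rem:ODEequiv}: two $2$-node UEI networks are ODE-equivalent iff their adjacency matrices span the same real subspace of $\mathbb{R}^{2\times 2}$. Starting from~\eqref{eq:gen_adj_UEI}, the node-type matrices $A_1 = E_{11}$ and $A_2 = E_{22}$ are independent of the multiplicity parameters and together span the diagonal subspace $D = \langle E_{11},E_{22}\rangle$. Writing $\bar{A}_3 = \beta_2 E_{12}+\beta_1 E_{21}$ and $\bar{A}_4 = \gamma_2 E_{12}+\gamma_1 E_{21}$ for the off-diagonal parts of $A_3$ and $A_4$, the total span decomposes as $V = \langle A_1,A_2,A_3,A_4\rangle = D\oplus\langle\bar{A}_3,\bar{A}_4\rangle$, so the ODE-class is completely determined by $U := \langle\bar{A}_3,\bar{A}_4\rangle \subseteq \langle E_{12},E_{21}\rangle \cong \mathbb{R}^2$.

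Connectedness forces $U\ne\{0\}$, so $\dim U\in\{1,2\}$. If $\dim U = 2$, then $U = \langle E_{12},E_{21}\rangle$ and every such network is ODE-equivalent to the minimal REI network NH2 of Proposition~\ref{prop:2_node_ODE}, giving a single class. If $\dim U = 1$, then $U = \mathbb{R}\cdot(p,q)$ for a unique pair of coprime non-negative integers $(p,q)\ne(0,0)$, and distinct such pairs yield distinct lines and hence distinct ODE-classes. Under duality (swap of $A^E$ and $A^I$) composed with the node renumbering that preserves the convention that node $1$ is $N^E$, matrices are conjugated by the permutation swapping the two basis vectors, which interchanges $E_{12}$ and $E_{21}$ and hence acts on the classification by $(p,q)\leftrightarrow(q,p)$. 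The set of unordered coprime pairs $\{(p,q),(q,p)\}$ is countably infinite, so one obtains countably infinitely many distinct ODE-classes.

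For each $1$-dimensional class $(p,q)$, a minimal representative is the UEI network with $A_3 = pE_{12}+qE_{21}$, $A_4 = 0$, and all diagonal entries zero, so that the network has exactly $p+q$ arrows of type $A^E$ and no self-loops; its admissible ODE is obtained from~\eqref{eq:general2UEI} by setting the unused multiplicities to zero. Together with the NH2 representative for the $2$-dimensional case, these populate Figure~\ref{fig:ODE_general_UEIV} and Table~\ref{table:ODE_general_UEIV}. The principal technical issue is irredundancy: one must check that the only nontrivial symmetry of the classification is the swap involution above, after which distinct $1$-dimensional subspaces of $\mathbb{R}^2$ certify pairwise inequivalent ODE-classes. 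Minimality of the representatives is immediate, since any network realizing the direction $(p,q)$ must contain at least $p+q$ off-diagonal arrows, namely the $\ell^1$-norm of the primitive lattice point on the line $\mathbb{R}(p,q)$.
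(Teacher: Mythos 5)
Your proposal is correct and follows essentially the same route as the paper: both rest on the Dias--Stewart linear-span criterion, strip off the diagonal part (absorbed by the node-type matrices $A_1,A_2$), and classify the remaining off-diagonal span, yielding the full space (NH2) or a line through a primitive nonnegative lattice point $(p,q)$, with $(0,1)$ giving NH1 and positive coprime pairs giving NH$\beta_1\beta_2$ up to the swap induced by renumbering/duality. The paper organises this as a case analysis on which of $\beta_1,\beta_2,\gamma_1,\gamma_2$ vanish, whereas you package it as a single invariant $U=\langle\bar A_3,\bar A_4\rangle$, but the content is identical.
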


\begin{figure}[!ht]
\begin{center}
{\tiny 
\begin{tabular}{ll} 
$NH1$ \begin{tikzpicture}
 [scale=.15,auto=left, node distance=1.5cm, 
 ]
 \node[fill=white,style={circle,draw}] (n1) at (4,0) {\small{1}};
  \node[fill=black!30,style={circle,draw}] (n2) at (14,0) {\small{2}};
 \draw[->, thick] (n1) edge[thick] node {}  (n2); 
 \end{tikzpicture}  
\quad & \quad 
$NH2$ \begin{tikzpicture}
 [scale=.15,auto=left, node distance=1.5cm, 
 ]
 \node[fill=white,style={circle,draw}] (n1) at (4,0) {\small{1}};
  \node[fill=black!30,style={circle,draw}] (n2) at (14,0) {\small{2}};
\path
        (n1) [->,thick]  edge[bend right=10] node { } (n2);
\path 
        (n2) [->,thick,dashed]  edge[bend left=-10] node { } (n1);
 \end{tikzpicture}  \\
  \\
 $NH\beta_1\beta_2$  \begin{tikzpicture}
 [scale=.15,auto=left, node distance=1.5cm]
 \node[fill=white,style={circle,draw}] (n1) at (4,0) {\small{1}};
  \node[fill=black!30,style={circle,draw}] (n2) at (14,0) {\small{2}};
 \draw[->, thick] (n1) edge[bend right=10] node [below=1pt] {{\tiny $\beta_1$}}  (n2);
\draw[->, thick] (n2) edge[bend left=-10] node [above=1pt]  {{\tiny $\beta_2$}}  (n1); 
 \end{tikzpicture} &     
 \begin{tabular}{l} $ \beta_1, \beta_2 \ge 1$ are coprime integers \\
 \end{tabular} 
\end{tabular}}
\end{center}
\caption{Network representatives, of the ODE-classes of  connected $2$-node UEI networks, up to duality. 
The representatives NH1 and NH2 are REI networks.
}
\label{fig:ODE_general_UEIV}
\end{figure}
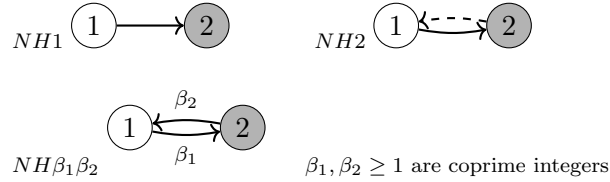

\begin{table}[!ht]
{\tiny 
\begin{tabular}{|l|l|}
\hline 
 & \\
NH1\quad $
 \begin{array}{l}
\dot{x}_1 = f(x^+_1) \\
\dot{x}_2 = g(x^-_2; x^+_1)
\end{array}
$ & 
NH2 \quad$
 \begin{array}{l}
\dot{x}_1 = f(x^+_1; x^-_2) \\
\dot{x}_2 = g(x^-_2; x^+_1)
\end{array}
$ 
\\
 & \\
 \hline 
 & \\
 NH$\beta_1\beta_2$ \quad$
\begin{array}{l}
\dot{x}_1 = f(x^+_1; \underbrace{\overline{x^+_2, \ldots, x^+_2}}_{ \beta_2}) \\
\dot{x}_2 = g(x^-_2; \underbrace{\overline{x^+_1, \ldots, x^+_1}}_{ \beta_1} )
\end{array}
$  
& \\
& \\
\hline  
\end{tabular}
}
\vspace{.2cm}
\caption{Admissible ODEs for the networks in Figure~\ref{fig:ODE_general_UEIV}.}
\label{table:ODE_general_UEIV}
\end{table}

\begin{proof}
Clearly
$$
\langle A_1,A_2,A_3,A_4 \rangle  = 
\left\langle A_1, A_2, 
\left[
\begin{array}{cc}
0    & \beta_2 \\
\beta_1 & 0
\end{array}
\right], 
\left[
\begin{array}{cc}
0 & \gamma_2 \\
\gamma_1 & 0
\end{array}
\right]
\right\rangle , 
$$
where the $A_i$ are given by (\ref{eq:gen_adj_UEI}). \\
(i) If $\beta_2=0$ and $\beta_1 \not= 0$ (duality deals with the case $\beta_2 \not=0$ and $\beta_1 =0$) then 
$$
\langle A_1,A_2,A_3,A_4\rangle  = 
\left\langle A_1, A_2, 
\left[
\begin{array}{cc}
0    & 0 \\
1 & 0
\end{array}
\right], 
\left[
\begin{array}{cc}
0 & \gamma_2 \\
0 & 0
\end{array}
\right]
\right\rangle \, .
$$
(i.a) If $\gamma_2 = 0$ then 
$$
\langle A_1,A_2,A_3,A_4\rangle  = 
\left\langle A_1, A_2, 
\left[
\begin{array}{cc}
0    & 0 \\
1 & 0
\end{array}
\right]\right\rangle ,
$$
which corresponds to network NH1 in Figure~\ref{fig:ODE_general_UEIV}. \\
(i.b) If $\gamma_2 \not=0$ we get network NH2 in Figure~\ref{fig:ODE_general_UEIV} with 
adjacency matrices 
$$
A_1, A_2, 
\left[
\begin{array}{cc}
0    & 0 \\
1 & 0
\end{array}
\right], 
\left[
\begin{array}{cc}
0 & 1 \\
0 & 0
\end{array}
\right]\, .
$$
The linear space
$$
\left\langle A_1, A_2, 
\left[
\begin{array}{cc}
0    & 0 \\
1 & 0
\end{array}
\right], 
\left[
\begin{array}{cc}
0 & 1 \\
0 & 0
\end{array}
\right]
\right\rangle 
$$
is 4-dimensional; that is, it coincides with the linear space of all $2 \times 2$-matrices with real entries.\\
(ii) If both $\beta_1, \beta_2$ are nonzero, we distinguish two cases:\\
(ii.a) When $\gamma_1 = \gamma_2 = 0$ we obtain networks with adjacency matrices 
$$
A_1, A_2, 
\left[
\begin{array}{cc}
0 & \beta_2 \\
\beta_1 & 0
\end{array}
\right] \, .
$$
In particular, we obtain the networks NH3 (if  $\beta_1 = \beta_2 =1$) and NH4 (if  $\beta_1 = 2, \beta_2 =1$) pictured in Figure~\ref{fig:ODE_2NCNUEIV2}.  
There is an infinite number of ODE-equivalence classes, with representatives the networks for each  arrow-type adjacency matrix
$$
\left[
\begin{array}{cc}
0 & \beta_2 \\
\beta_1 & 0
\end{array}
\right]
$$ 
where 
$\beta_1, \beta_2 \ge 1$ are coprime, yielding the network 
NH$\beta_1\beta_2$.  \\
(ii.b) If at least one of the $\gamma_1$ or $\gamma_2$ is nonzero, either  
$\langle A_1,A_2,A_3,A_4\rangle = \langle A_1,A_2,A_3\rangle$, leading to one of the previous cases, or 
$\dim \langle A_1,A_2,A_3,A_4\rangle =4$, and we have a network ODE-equivalent to NH2.
This was obtained previously, since  
$$
\begin{array}{rcl} 
 \langle A_1,A_2,A_3,A_4\rangle &=&  
\left\langle A_1, A_2, 
\left[
\begin{array}{cc}
0    & 0 \\
1 & 0
\end{array}
\right], 
\left[
\begin{array}{cc}
0 & 1 \\
0 & 0
\end{array}
\right]
\right\rangle  \, .
\end{array}
$$
(iii) In the final case when $\beta_1 = \beta_2 =0$ we obtain the dual cases of the networks in (ii.a). 
\end{proof}

\subsection{Connected 2-node UEI Networks with valence up to 2}
\label{S:C2UEIV2}

Using the results in the previous section, we list
 all the connected $2$-node UEI networks with valence $\leq 2$.

Proposition~\ref{prop:UEI_ODE_classes} implies:
\begin{prop}
\label{L:3.8}
There are $4$ ODE-classes of connected $2$-node UEI networks with valence $\leq 2$. Representatives are in  Figure~{\rm \ref{fig:ODE_2NCNUEIV2}}, and associated admissible ODEs 
are listed in Table~{\rm \ref{table:ODE_2NCNUEIV2}}. 
\end{prop}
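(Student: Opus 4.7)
The plan is to deduce the count directly from Proposition~\ref{prop:UEI_ODE_classes}, which lists a complete set of ODE-class representatives for connected 2-node UEI networks: NH1, NH2, and the infinite family NH$\beta_1\beta_2$ indexed by coprime pairs $\beta_1,\beta_2\geq 1$. Since ODE-equivalence is determined by the span of the node- and arrow-type adjacency matrices (Remark~\ref{rem:ODEequiv}(ii)), a criterion that is unaffected by any valence bound on individual networks, the ODE-classes among connected 2-node UEI networks with valence $\leq 2$ are exactly those classes from Proposition~\ref{prop:UEI_ODE_classes} which contain some representative whose nodes all have valence $\leq 2$.

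I would then inspect each family in Figure~\ref{fig:ODE_general_UEIV}. The representatives NH1 and NH2 have at most one input arrow at every node, so they trivially satisfy the bound and contribute two ODE-classes (they are the same two REI ODE-classes of Proposition~\ref{prop:2_node_ODE}). For NH$\beta_1\beta_2$, node~1 receives $\beta_2$ arrows and node~2 receives $\beta_1$ arrows, so the valence bound forces $\beta_1,\beta_2\in\{1,2\}$; combined with coprimality this leaves only the three candidate pairs $(1,1)$, $(1,2)$ and $(2,1)$.

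The remaining point is to remove overcounting from this parametrisation. By Convention~(b) of Section~\ref{S:FD}, ODE-classes are counted up to renumbering of nodes and duality. Combining these two operations on NH$\beta_1\beta_2$ interchanges $\beta_1$ and $\beta_2$: swapping the node labels conjugates the arrow-type adjacency matrix by the transposition, mapping its entries $(\beta_1,\beta_2)$ to $(\beta_2,\beta_1)$, and the accompanying duality $N^E\leftrightarrow N^I$, $A^E\leftrightarrow A^I$ restores the normalised form with node~1 excitatory; the span of adjacency matrices is preserved throughout. Hence $(1,2)$ and $(2,1)$ represent the same ODE-class, whereas $(1,1)$ is fixed by the swap, producing exactly two distinct classes from the NH$\beta_1\beta_2$ family under the valence bound, namely NH$11$ and NH$12$.

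Summing gives $2+2=4$ ODE-classes, with representatives NH1, NH2, NH$11$, NH$12$ as pictured in Figure~\ref{fig:ODE_2NCNUEIV2}; the associated admissible ODEs are obtained by specialising Table~\ref{table:ODE_general_UEIV} at these parameter values, producing Table~\ref{table:ODE_2NCNUEIV2}. The only subtle step is the renumbering/duality identification within the NH$\beta_1\beta_2$ family; once this is correctly accounted for, the remainder is a direct enumeration from the list in Proposition~\ref{prop:UEI_ODE_classes}.
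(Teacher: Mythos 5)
Your proposal is correct and follows essentially the same route as the paper: both deduce the result from Proposition~\ref{prop:UEI_ODE_classes} by checking which of the listed minimal representatives satisfy the valence bound, obtaining NH1, NH2 and the two surviving members of the NH$\beta_1\beta_2$ family. The only difference is that you make explicit the identification of the parameter pairs $(1,2)$ and $(2,1)$ under renumbering combined with duality, a point the paper's proof passes over silently when it asserts that only $\beta_1=\beta_2=1$ and $\beta_1=2,\beta_2=1$ occur.
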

\begin{proof}
Consider the ODE-classes of the connected $2$-node UEI networks given by Proposition~\ref{prop:UEI_ODE_classes}, with representatives in Figure~\ref{fig:ODE_general_UEIV}. These representatives are minimal, in the sense of having a minimum number of arrows. Thus it is enough to check which of these ODE-classes has a minimal representative with valence $\leq 2$.

Clearly NH1 and NH2 in Figure~\ref{fig:ODE_general_UEIV} are representatives of two of the ODE-classes of connected $2$-node UEI networks with valence $\leq 2$.

There are only  two ODE-class representatives obtained from NH$\beta_1\beta_2$ in Figure~\ref{fig:ODE_general_UEIV}, one with $\beta_1 = \beta_2 =1$ and the other with $\beta_1 = 2, \beta_2 =1$. These are networks NH3 and NH4 in Figure~\ref{fig:ODE_2NCNUEIV2}.
\end{proof}

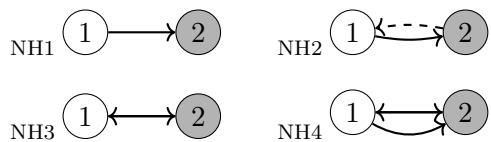
\begin{figure}
\begin{center}
{\tiny 
\begin{tabular}{ll} 
NH1 \begin{tikzpicture}
 [scale=.15,auto=left, node distance=1.5cm, 
 ]
 \node[fill=white,style={circle,draw}] (n1) at (4,0) {\small{1}};
  \node[fill=black!30,style={circle,draw}] (n2) at (14,0) {\small{2}};
 \draw[->, thick] (n1) edge[thick] node {}  (n2); 
 \end{tikzpicture}  
\quad & \quad 
NH2 \begin{tikzpicture}
 [scale=.15,auto=left, node distance=1.5cm, 
 ]
 \node[fill=white,style={circle,draw}] (n1) at (4,0) {\small{1}};
  \node[fill=black!30,style={circle,draw}] (n2) at (14,0) {\small{2}};
\path
        (n1) [->,thick]  edge[bend right=10] node { } (n2);
\path 
        (n2) [->,thick,dashed]  edge[bend left=-10] node { } (n1);
 \end{tikzpicture}   \\
  \\
 NH3
 \begin{tikzpicture}
 [scale=.15,auto=left, node distance=1.5cm]
 \node[fill=white,style={circle,draw}] (n1) at (4,0) {\small{1}};
  \node[fill=black!30,style={circle,draw}] (n2) at (14,0) {\small{2}};
 \draw[->, thick] (n1) edge[thick] node {}  (n2);
\draw[->, thick] (n2) edge[thick] node {}  (n1); 
 \end{tikzpicture}  
\quad & \quad 
 NH4
 \begin{tikzpicture}
[scale=.15,auto=left, node distance=1.5cm]
 \node[fill=white,style={circle,draw}] (n1) at (4,0) {\small{1}};
  \node[fill=black!30,style={circle,draw}] (n2) at (14,0) {\small{2}};
   \draw[->, thick] (n1) edge[thick] node {}  (n2);
\draw[->, thick] (n2) edge[thick] node {}  (n1);  
 \path 
             (n1) [->,thick]  edge[bend left=-30] node { } (n2);
 \end{tikzpicture} 
\end{tabular}}
\end{center}
\caption{
Minimal network representatives of the $4$ ODE-classes of connected $2$-node UEI networks with input valence $\leq 2$. 
The representatives NH1 and NH2 are REI networks.
}
\label{fig:ODE_2NCNUEIV2}
\end{figure}

\begin{table}[h!]
{\tiny 
\begin{tabular}{|l|l|}
\hline 
 & \\
NH1 \quad $
\begin{array}{l}
\dot{x}_1 = f(x^+_1) \\
\dot{x}_2 = g(x^-_2; x^+_1)
\end{array}
$ & 
NH2 \quad$
\begin{array}{l}
\dot{x}_1 = f(x^+_1; x^-_2) \\
\dot{x}_2 = g(x^-_2; x^+_1)
\end{array}
$  \\
 & \\
\hline 
 & \\
NH3\quad
$
\begin{array}{l}
\dot{x}_1 = f(x^+_1; x^+_2) \\
\dot{x}_2 = g(x^-_2; x^+_1)
\end{array}
$ & 
NH4\quad
$ 
\begin{array}{l}
\dot{x}_1 = f(x^+_1; x^+_2) \\
\dot{x}_2 = g(x^-_2; \overline{x^+_1,x^+_1})
\end{array}
$ \\
 & \\
\hline 
\end{tabular}
}
\vspace{.2cm}
\caption{Admissible ODEs for the networks in Figure~\ref{fig:ODE_2NCNUEIV2}.} 
\label{table:ODE_2NCNUEIV2}
\end{table}

\begin{figure}[]
{\tiny 
\begin{center}
\begin{tabular}{llll} 
 $(b.1)$ \begin{tikzpicture}
 [scale=.15,auto=left, node distance=1.5cm, 
 ]
 \node[fill=white,style={circle,draw}] (n1) at (4,0) {\small{1}};
  \node[fill=black!30,style={circle,draw}] (n2) at (14,0) {\small{2}};
 \draw[->, thick] (n1) edge[bend right=10, thick] node {}  (n2); 
 \draw[->, thick,dashed] (n1) edge[bend right=-10,thick] node {}  (n2); 
 \end{tikzpicture}   
&
 $(c)$ \begin{tikzpicture}
 [scale=.15,auto=left, node distance=1.5cm, 
 ]
 \node[fill=white,style={circle,draw}] (n1) at (4,0) {\small{1}};
  \node[fill=black!30,style={circle,draw}] (n2) at (14,0) {\small{2}};
\path
        (n1) [->,solid, thick]  edge[thick] node { } (n2);
\path        
        (n2) [->,thick]  edge[loop right=90,thick] node { } (n2);
 \end{tikzpicture} 
 &
 $(d.1)$ \begin{tikzpicture}
 [scale=.15,auto=left, node distance=1.5cm, 
 ]
 \node[fill=white,style={circle,draw}] (n1) at (4,0) {\small{1}};
  \node[fill=black!30,style={circle,draw}] (n2) at (14,0) {\small{2}};
\path
        (n1) [->,thick]  edge[thick] node { } (n2);
 \path 
        (n1) [->,thick,dashed]  edge[loop left=90,thick] node { } (n1);
 \end{tikzpicture} & 
 $(e.1)$ \begin{tikzpicture}
 [scale=.15,auto=left, node distance=1.5cm, 
 ]
 \node[fill=white,style={circle,draw}] (n1) at (4,0) {\small{1}};
  \node[fill=black!30,style={circle,draw}] (n2) at (14,0) {\small{2}};
\path
        (n1)  [->]  edge[loop,thick] node {} (n1)
        (n1)  [->]  edge[loop left=90,thick] node {} (n1);
 \path 
        (n1) [->,dashed]  edge[thick] node { } (n2);
 \end{tikzpicture} \\
\\
 $(e.2)$ \begin{tikzpicture}
 [scale=.15,auto=left, node distance=1.5cm, 
 ]
 \node[fill=white,style={circle,draw}] (n1) at (4,0) {\small{1}};
  \node[fill=black!30,style={circle,draw}] (n2) at (14,0) {\small{2}};
\path
        (n1)  [->]  edge[loop,thick] node {} (n1);
 \path 
        (n1)  [->, dashed]  edge[loop left=90,thick] node {} (n1);
  \path 
        (n1) [->,thick]  edge[thick] node { } (n2);
 \end{tikzpicture}
 &
 $(f.1)$  \begin{tikzpicture}
 [scale=.15,auto=left, node distance=1.5cm, 
 ]
 \node[fill=white,style={circle,draw}] (n1) at (4,0) {\small{1}};
  \node[fill=black!30,style={circle,draw}] (n2) at (14,0) {\small{2}};
\path
        (n1)  [->,dashed]  edge[loop left=90,thick] node {} (n1);
\path 
        (n1) [->,thick]  edge[bend right=10, thick] node { } (n2)
        (n1) [->,thick]  edge[bend right=-10, thick] node { } (n2);
 \end{tikzpicture} 
&
 $(f.2)$  \begin{tikzpicture}
 [scale=.15,auto=left, node distance=1.5cm, 
 ]
 \node[fill=white,style={circle,draw}] (n1) at (4,0) {\small{1}};
  \node[fill=black!30,style={circle,draw}] (n2) at (14,0) {\small{2}};
\path
        (n1)  [->]  edge[loop left=90,thick] node {} (n1)
        (n1) [->,thick]  edge[bend right=10, thick] node { } (n2);
  \path 
        (n1) [->,dashed]  edge[bend right=-10, thick] node { } (n2);
 \end{tikzpicture} 
&
$(g)$ \begin{tikzpicture}
 [scale=.15,auto=left, node distance=1.5cm, 
 ]
 \node[fill=white,style={circle,draw}] (n1) at (4,0) {\small{1}};
  \node[fill=black!30,style={circle,draw}] (n2) at (14,0) {\small{2}};
\path
        (n1) [->,thick]  edge[thick] node { } (n2)
         (n1) [->,thick]  edge[loop left=90, thick] node { } (n1);
\path 
        (n2) [->,thick]  edge[loop right=90, thick] node { } (n2);
 \end{tikzpicture} \\
 \\
 $(g.1)$ \begin{tikzpicture}
 [scale=.15,auto=left, node distance=1.5cm, 
 ]
 \node[fill=white,style={circle,draw}] (n1) at (4,0) {\small{1}};
  \node[fill=black!30,style={circle,draw}] (n2) at (14,0) {\small{2}};
\path
        (n1) [->,thick]  edge[thick] node { } (n2);
 \path 
         (n1) [->,dashed]  edge[loop left=90, thick] node { } (n1);
\path 
        (n2) [->,thick]  edge[loop right=90, thick] node { } (n2);
 \end{tikzpicture}  &
 $(g.2)$ \begin{tikzpicture}
 [scale=.15,auto=left, node distance=1.5cm, 
 ]
 \node[fill=white,style={circle,draw}] (n1) at (4,0) {\small{1}};
  \node[fill=black!30,style={circle,draw}] (n2) at (14,0) {\small{2}};
\path
        (n1) [->,dashed]  edge[thick] node { } (n2);
\path 
         (n1) [->,thick]  edge[loop left=90, thick] node { } (n1);
\path 
        (n2) [->,thick]  edge[loop right=90, thick] node { } (n2);
 \end{tikzpicture}  
&
 $(h.1)$ 
  \begin{tikzpicture}
 [scale=.15,auto=left, node distance=1.5cm, 
 ]
 \node[fill=white,style={circle,draw}] (n1) at (4,0) {\small{1}};
  \node[fill=black!30,style={circle,draw}] (n2) at (14,0) {\small{2}};
\path
        (n1)  [->]  edge[loop,thick] node {} (n1);
 \path 
        (n1)  [->,dashed]  edge[loop left=90,thick] node {} (n1);
\path 
        (n1) [->,thick]  edge[bend right=20, thick] node { } (n2)
        (n1) [->,thick]  edge[bend right=-30, thick] node { } (n2);
 \end{tikzpicture} 
 &
 $(h.2)$ 
  \begin{tikzpicture}
 [scale=.15,auto=left, node distance=1.5cm, 
 ]
 \node[fill=white,style={circle,draw}] (n1) at (4,0) {\small{1}};
  \node[fill=black!30,style={circle,draw}] (n2) at (14,0) {\small{2}};
\path
        (n1)  [->,dashed]  edge[loop,thick] node {} (n1);
 \path 
        (n1)  [->,dashed]  edge[loop left=90,thick] node {} (n1);
\path 
        (n1) [->,thick]  edge[bend right=20, thick] node { } (n2)
        (n1) [->,thick]  edge[bend right=-30, thick] node { } (n2);
 \end{tikzpicture} \\
\\
 $(h.3)$ 
  \begin{tikzpicture}
 [scale=.15,auto=left, node distance=1.5cm, 
 ]
 \node[fill=white,style={circle,draw}] (n1) at (4,0) {\small{1}};
  \node[fill=black!30,style={circle,draw}] (n2) at (14,0) {\small{2}};
\path
        (n1)  [->]  edge[loop,thick] node {} (n1);
 \path 
        (n1)  [->]  edge[loop left=90,thick] node {} (n1);
\path 
        (n1) [->,dashed]  edge[bend right=20, thick] node { } (n2);
\path 
        (n1) [->,thick]  edge[bend right=-30, thick] node { } (n2);
 \end{tikzpicture} 
&
 $(h.4)$ 
  \begin{tikzpicture}
 [scale=.15,auto=left, node distance=1.5cm, 
 ]
 \node[fill=white,style={circle,draw}] (n1) at (4,0) {\small{1}};
  \node[fill=black!30,style={circle,draw}] (n2) at (14,0) {\small{2}};
\path
        (n1)  [->]  edge[loop,thick] node {} (n1);
 \path 
        (n1)  [->,dashed]  edge[loop left=90,thick] node {} (n1);
\path 
        (n1) [->,dashed]  edge[bend right=20, thick] node { } (n2);
 \path 
        (n1) [->,thick]  edge[bend right=-30, thick] node { } (n2);
 \end{tikzpicture}  
&
 $(i)$  \begin{tikzpicture}
 [scale=.15,auto=left, node distance=1.5cm, 
 ]
 \node[fill=white,style={circle,draw}] (n1) at (4,0) {\small{1}};
  \node[fill=black!30,style={circle,draw}] (n2) at (14,0) {\small{2}};
\path
        (n1)  [->]  edge[loop,thick] node {} (n1)
        (n1)  [->]  edge[loop left=90,thick] node {} (n1)
        (n1) [->,thick]  edge[thick] node { } (n2);
\path 
         (n2)  [->]  edge[loop right=90,thick] node {} (n2); 
 \end{tikzpicture}
 &
  $(i.1)$  \begin{tikzpicture}
 [scale=.15,auto=left, node distance=1.5cm, 
 ]
 \node[fill=white,style={circle,draw}] (n1) at (4,0) {\small{1}};
  \node[fill=black!30,style={circle,draw}] (n2) at (14,0) {\small{2}};
\path
        (n1)  [->]  edge[loop,thick] node {} (n1);
\path 
        (n1)  [->,dashed]  edge[loop left=90,thick] node {} (n1);
 \path 
        (n1) [->,thick]  edge[thick] node { } (n2);
\path 
         (n2)  [->]  edge[loop right=90,thick] node {} (n2); 
 \end{tikzpicture} \\
\\
  $(i.2)$  \begin{tikzpicture}
 [scale=.15,auto=left, node distance=1.5cm, 
 ]
 \node[fill=white,style={circle,draw}] (n1) at (4,0) {\small{1}};
  \node[fill=black!30,style={circle,draw}] (n2) at (14,0) {\small{2}};
\path
        (n1)  [->,dashed]  edge[loop,thick] node {} (n1)
        (n1)  [->,dashed]  edge[loop left=90,thick] node {} (n1);
 \path 
        (n1) [->,thick]  edge[thick] node { } (n2);
\path 
         (n2)  [->]  edge[loop right=90,thick] node {} (n2); 
 \end{tikzpicture} 
&
  $(i.3)$  \begin{tikzpicture}
 [scale=.15,auto=left, node distance=1.5cm, 
 ]
 \node[fill=white,style={circle,draw}] (n1) at (4,0) {\small{1}};
  \node[fill=black!30,style={circle,draw}] (n2) at (14,0) {\small{2}};
\path
        (n1)  [->]  edge[loop,thick] node {} (n1)
        (n1)  [->]  edge[loop left=90,thick] node {} (n1);
  \path 
        (n1) [->,dashed]  edge[thick] node { } (n2);
\path 
         (n2)  [->]  edge[loop right=90,thick] node {} (n2); 
 \end{tikzpicture} 
&
  $(i.4)$  \begin{tikzpicture}
 [scale=.15,auto=left, node distance=1.5cm, 
 ]
 \node[fill=white,style={circle,draw}] (n1) at (4,0) {\small{1}};
  \node[fill=black!30,style={circle,draw}] (n2) at (14,0) {\small{2}};
\path
        (n1)  [->]  edge[loop,thick] node {} (n1);
 \path 
        (n1)  [->,dashed]  edge[loop left=90,thick] node {} (n1);
 \path 
        (n1) [->,dashed]  edge[thick] node { } (n2);
\path 
         (n2)  [->]  edge[loop right=90,thick] node {} (n2); 
 \end{tikzpicture} 
&
$(j)$ \begin{tikzpicture}
 [scale=.15,auto=left, node distance=1.5cm, 
 ]
 \node[fill=white,style={circle,draw}] (n1) at (4,0) {\small{1}};
  \node[fill=black!30,style={circle,draw}] (n2) at (14,0) {\small{2}};
\path
        (n1) [->,thick]  edge[bend right=10] node { } (n2);
 \path 
        (n2) [->,thick]  edge[bend left=-10] node { } (n1);
 \end{tikzpicture} \\
 \\
 $(k)$ \begin{tikzpicture}
 [scale=.15,auto=left, node distance=1.5cm, 
 ]
 \node[fill=white,style={circle,draw}] (n1) at (4,0) {\small{1}};
  \node[fill=black!30,style={circle,draw}] (n2) at (14,0) {\small{2}};
\path
        (n1)  [->]  edge[loop left=90,thick] node {} (n1)
        (n1) [->,thick]  edge[bend right=10, thick] node { } (n2);
\path 
        (n2) [->,thick]  edge[bend left=-10] node { } (n1);
 \end{tikzpicture}
&   
 $(k.1)$ \begin{tikzpicture}
 [scale=.15,auto=left, node distance=1.5cm, 
 ]
 \node[fill=white,style={circle,draw}] (n1) at (4,0) {\small{1}};
  \node[fill=black!30,style={circle,draw}] (n2) at (14,0) {\small{2}};
\path
        (n1)  [->]  edge[loop left=90,thick] node {} (n1);
 \path 
        (n1) [->,dashed]  edge[bend right=10, thick] node { } (n2);
\path 
        (n2) [->,thick]  edge[bend left=-10] node { } (n1);
 \end{tikzpicture} 
&
$(k.2)$ \begin{tikzpicture}
 [scale=.15,auto=left, node distance=1.5cm, 
 ]
 \node[fill=white,style={circle,draw}] (n1) at (4,0) {\small{1}};
  \node[fill=black!30,style={circle,draw}] (n2) at (14,0) {\small{2}};
\path
        (n1)  [->,dashed]  edge[loop left=90,thick] node {} (n1);
 \path 
        (n1) [->,thick]  edge[bend right=10, thick] node { } (n2);
\path 
        (n2) [->,thick]  edge[bend left=-10] node { } (n1);
 \end{tikzpicture}
&   
  $(l)$ \begin{tikzpicture}
 [scale=.15,auto=left, node distance=1.5cm, 
 ]
 \node[fill=white,style={circle,draw}] (n1) at (4,0) {\small{1}};
  \node[fill=black!30,style={circle,draw}] (n2) at (14,0) {\small{2}};
\path
         (n1) [->,thick]  edge[bend right=10, thick] node { } (n2);
\path 
        (n2) [->,thick]  edge[bend left =-10] node { } (n1)
        (n2) [->,thick]  edge[bend left=-30] node { } (n1);
 \end{tikzpicture} \\
 \\
 $(l.1)$ \begin{tikzpicture}
 [scale=.15,auto=left, node distance=1.5cm, 
 ]
 \node[fill=white,style={circle,draw}] (n1) at (4,0) {\small{1}};
  \node[fill=black!30,style={circle,draw}] (n2) at (14,0) {\small{2}};
\path
         (n1) [->,thick]  edge[bend right=10, thick] node { } (n2);
\path 
        (n2) [->,thick, dashed]  edge[bend left =-10] node { } (n1);
 \path 
        (n2) [->,thick]  edge[bend left=-30] node { } (n1);
 \end{tikzpicture} 
&
 $(m)$   \begin{tikzpicture}
 [scale=.15,auto=left, node distance=1.5cm, 
 ]
 \node[fill=white,style={circle,draw}] (n1) at (4,0) {\small{1}};
  \node[fill=black!30,style={circle,draw}] (n2) at (14,0) {\small{2}};
\path
        (n1)  [->]  edge[loop left=90,thick] node {} (n1)
        (n1) [->,thick]  edge[bend right=10, thick] node { } (n2)
         (n1) [->,thick]  edge[bend right=40, thick] node { } (n2);
\path 
        (n2) [->,thick]  edge[bend left=-10, thick] node { } (n1);      
 \end{tikzpicture}
 &  
  $(m.1)$   \begin{tikzpicture}
 [scale=.15,auto=left, node distance=1.5cm, 
 ]
 \node[fill=white,style={circle,draw}] (n1) at (4,0) {\small{1}};
  \node[fill=black!30,style={circle,draw}] (n2) at (14,0) {\small{2}};
\path
        (n1)  [->,dashed]  edge[loop left=90,thick] node {} (n1);
\path 
        (n1) [->,thick]  edge[bend right=10, thick] node { } (n2)
         (n1) [->,thick]  edge[bend right=40, thick] node { } (n2);
\path 
        (n2) [->,thick]  edge[bend left=-10, thick] node { } (n1);      
 \end{tikzpicture}
 &  
  $(m.3)$   \begin{tikzpicture}
 [scale=.15,auto=left, node distance=1.5cm, 
 ]
 \node[fill=white,style={circle,draw}] (n1) at (4,0) {\small{1}};
  \node[fill=black!30,style={circle,draw}] (n2) at (14,0) {\small{2}};
\path
        (n1)  [->,dashed]  edge[loop left=90,thick] node {} (n1);
 \path 
        (n1) [->,thick]  edge[bend right=10, thick] node { } (n2)
         (n1) [->,thick]  edge[bend right=40, thick] node { } (n2);
\path 
        (n2) [->,thick, dashed]  edge[bend left=-10, thick] node { } (n1);      
 \end{tikzpicture} \\
 \\
  $(m.4)$   \begin{tikzpicture}
 [scale=.15,auto=left, node distance=1.5cm, 
 ]
 \node[fill=white,style={circle,draw}] (n1) at (4,0) {\small{1}};
  \node[fill=black!30,style={circle,draw}] (n2) at (14,0) {\small{2}};
\path
        (n1)  [->]  edge[loop left=90,thick] node {} (n1)
        (n1) [->,thick]  edge[bend right=10, thick] node { } (n2);
\path 
         (n1) [->,dashed]  edge[bend right=40, thick] node { } (n2);
\path 
        (n2) [->,thick]  edge[bend left=-10, thick] node { } (n1);      
 \end{tikzpicture}
 &  
 $(m.5)$   \begin{tikzpicture}
 [scale=.15,auto=left, node distance=1.5cm, 
 ]
 \node[fill=white,style={circle,draw}] (n1) at (4,0) {\small{1}};
  \node[fill=black!30,style={circle,draw}] (n2) at (14,0) {\small{2}};
\path
        (n1)  [->,dashed]  edge[loop left=90,thick] node {} (n1);
 \path 
        (n1) [->,thick]  edge[bend right=10, thick] node { } (n2);
 \path 
         (n1) [->,dashed]  edge[bend right=40, thick] node { } (n2);
\path 
        (n2) [->,thick]  edge[bend left=-10, thick] node { } (n1);      
 \end{tikzpicture} 
&  
 $(n)$  \begin{tikzpicture}
 [scale=.15,auto=left, node distance=1.5cm, 
 ]
 \node[fill=white,style={circle,draw}] (n1) at (4,0) {\small{1}};
  \node[fill=black!30,style={circle,draw}] (n2) at (14,0) {\small{2}};
\path
        (n1)  [->]  edge[loop left=90,thick] node {} (n1)
         (n1) [->,thick]  edge[bend right=10] node { } (n2);
\path          
         (n2)  [->]  edge[loop right=90,thick] node {} (n2)       
        (n2) [->,thick]  edge[bend left=-10] node { } (n1);
 \end{tikzpicture} 
&
  $(n.1)$  \begin{tikzpicture}
 [scale=.15,auto=left, node distance=1.5cm, 
 ]
 \node[fill=white,style={circle,draw}] (n1) at (4,0) {\small{1}};
  \node[fill=black!30,style={circle,draw}] (n2) at (14,0) {\small{2}};
\path
        (n1)  [->]  edge[loop left=90,thick] node {} (n1)
         (n1) [->,thick]  edge[bend right=10] node { } (n2);
\path          
         (n2)  [->, dashed]  edge[loop right=90,thick] node {} (n2);
  \path      
        (n2) [->,thick]  edge[bend left=-10] node { } (n1);
 \end{tikzpicture} \\
\\
  $(n.2)$  \begin{tikzpicture}
 [scale=.15,auto=left, node distance=1.5cm, 
 ]
 \node[fill=white,style={circle,draw}] (n1) at (4,0) {\small{1}};
  \node[fill=black!30,style={circle,draw}] (n2) at (14,0) {\small{2}};
\path
        (n1)  [->]  edge[loop left=90,thick] node {} (n1);
\path 
         (n1) [->,dashed]  edge[bend right=10] node { } (n2);
\path          
         (n2)  [->]  edge[loop right=90,thick] node {} (n2);
 \path        
        (n2) [->,thick]  edge[bend left=-10] node { } (n1);
 \end{tikzpicture}  
&
  $(n.3)$  \begin{tikzpicture}
 [scale=.15,auto=left, node distance=1.5cm, 
 ]
 \node[fill=white,style={circle,draw}] (n1) at (4,0) {\small{1}};
  \node[fill=black!30,style={circle,draw}] (n2) at (14,0) {\small{2}};
\path
        (n1)  [->]  edge[loop left=90,thick] node {} (n1);
 \path 
         (n1) [->,dashed]  edge[bend right=10] node { } (n2);
\path          
         (n2)  [->, dashed]  edge[loop right=90,thick] node {} (n2);
 \path        
        (n2) [->,thick]  edge[bend left=-10] node { } (n1);
 \end{tikzpicture}  
&
  $(n.4)$  \begin{tikzpicture}
 [scale=.15,auto=left, node distance=1.5cm, 
 ]
 \node[fill=white,style={circle,draw}] (n1) at (4,0) {\small{1}};
  \node[fill=black!30,style={circle,draw}] (n2) at (14,0) {\small{2}};
\path
        (n1)  [->]  edge[loop left=90,thick] node {} (n1);
 \path 
         (n1) [->,dashed]  edge[bend right=10] node { } (n2);
\path          
         (n2)  [->,thick] edge[loop right=90,thick] node {} (n2);
 \path      
        (n2) [->,thick, dashed]  edge[bend left=-10] node { } (n1);
 \end{tikzpicture} 
 &
 $(o)$ \begin{tikzpicture}
 [scale=.15,auto=left, node distance=1.5cm, 
 ]
 \node[fill=white,style={circle,draw}] (n1) at (4,0) {\small{1}};
  \node[fill=black!30,style={circle,draw}] (n2) at (14,0) {\small{2}};
\path
        (n1)  [->]  edge[bend right=10,thick] node {} (n2)
         (n1) [->,thick]  edge[bend right=40, thick] node { } (n2);
 \path 
         (n2)  [->]  edge[bend left=-10,thick] node {} (n1)      
        (n2) [->,thick]  edge[bend left=-40, thick] node { } (n1);
 \end{tikzpicture} \\
\\ 
 $(o.1)$ \begin{tikzpicture}
 [scale=.15,auto=left, node distance=1.5cm, 
 ]
 \node[fill=white,style={circle,draw}] (n1) at (4,0) {\small{1}};
  \node[fill=black!30,style={circle,draw}] (n2) at (14,0) {\small{2}};
\path
        (n1)  [->]  edge[bend right=10,thick] node {} (n2);
  \path 
         (n1) [->,dashed]  edge[bend right=40, thick] node { } (n2);
 \path 
         (n2)  [->]  edge[bend left=-10,thick] node {} (n1)      
        (n2) [->,thick]  edge[bend left=-40, thick] node { } (n1);
 \end{tikzpicture} 
&
 $(o.3)$ \begin{tikzpicture}
 [scale=.15,auto=left, node distance=1.5cm, 
 ]
 \node[fill=white,style={circle,draw}] (n1) at (4,0) {\small{1}};
  \node[fill=black!30,style={circle,draw}] (n2) at (14,0) {\small{2}};
\path
        (n1)  [->,thick]  edge[bend right=10,thick] node {} (n2);
 \path 
         (n1) [->,dashed]  edge[bend right=40, thick] node { } (n2);
 \path 
         (n2)  [->,thick]  edge[bend left=-10,thick] node {} (n1);
\path      
        (n2) [->,dashed]  edge[bend left=-40, thick] node { } (n1);
 \end{tikzpicture} &&
\end{tabular}
\end{center}
}
\caption{$38$ connected 2-node UEI networks with input valence $\leq 2$. 
This set is partitioned into $4$ ODE-classes as in 
Table~\ref{table:UEI+ODE}. Representatives of the ODE-classes are in Figure~\ref{fig:ODE_2NCNUEIV2}.
}
\label{fig:2NCNUEIV2}
\end{figure}
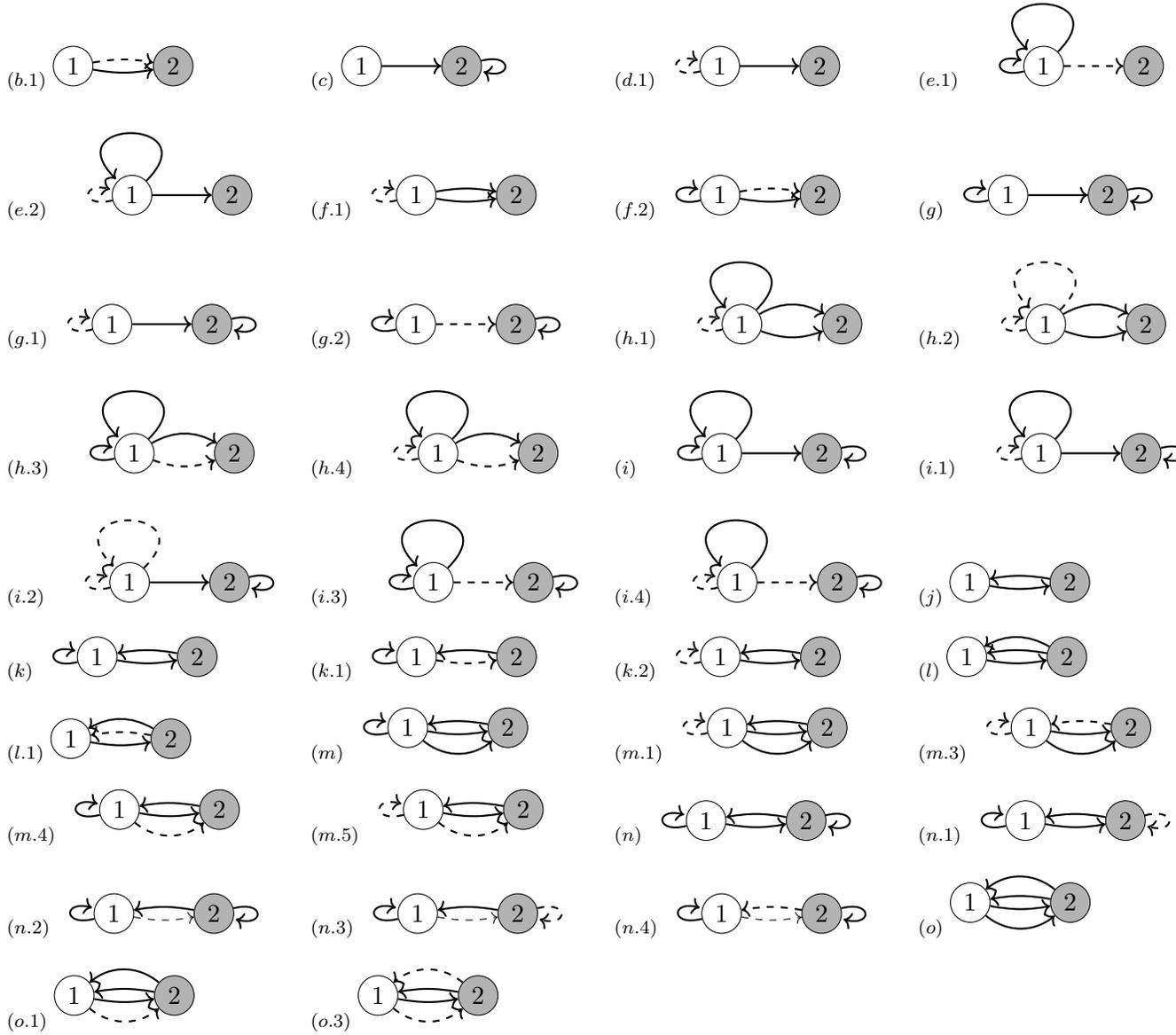

\begin{prop} 
There are $53$ connected $2$-node UEI networks with valence $\leq 2$: the $15$ REI networks in Figure~{\rm \ref{fig:2NCNREIV2}}, given by Proposition~\ref{prop:REI2},
and the $38$ networks in {\rm Figure~\ref{fig:2NCNUEIV2}}.  
Considering the 4 ODE-classes in Figure~\ref{fig:ODE_2NCNUEIV2},
the ODE-class {\rm NH1} contains $9$ of the UEI networks in Figure~{\rm \ref{fig:2NCNREIV2}} and the ODE-class {\rm NH2} contains the other $6$. 
The partition of $38$ UEI networks in {\rm Figure~\ref{fig:2NCNUEIV2}} into $4$ ODE-classes is stated in Table~{\rm \ref{table:UEI+ODE}}. 
\label{prop:UEI2}
\end{prop}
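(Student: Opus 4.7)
The plan is to split the proof into two independent parts: an enumeration producing the $53$ diagrams, and a linear-algebra computation assigning each diagram to one of the four ODE-classes of Proposition~\ref{L:3.8}.

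First I would invoke Proposition~\ref{prop:REI2}, which supplies the $15$ REI members of the list and already places $9$ of them in the class NH1 and $6$ in the class NH2. The remaining task is to enumerate the connected $2$-node UEI networks with valence $\leq 2$ that are \emph{not} REI. Fixing node $1$ excitatory and node $2$ inhibitory by duality, a non-REI UEI network must violate at least one of conditions (c),(d) of Definition~\ref{def:EIN}: either node $1$ outputs an $A^I$ arrow (a self-loop, or an arrow to node $2$), or node $2$ outputs an $A^E$ arrow. Parametrising a general $2$-node UEI network by the multiplicities $(\alpha_1,\delta_1,\beta_1,\gamma_1,\beta_2,\gamma_2,\alpha_2,\delta_2)$ of \eqref{eq:gen_adj_UEI}, I would list all nonnegative integer tuples satisfying the valence constraints
\[
\alpha_1+\delta_1+\beta_2+\gamma_2 \le 2,\qquad \alpha_2+\delta_2+\beta_1+\gamma_1\le 2,
\]
the connectedness condition ``at least one of $\beta_1,\beta_2,\gamma_1,\gamma_2$ is nonzero'', and the non-REI condition ``at least one of $\delta_1,\alpha_2,\beta_2,\gamma_1$ is nonzero''; then quotient by the duality involution that composes the node renumbering $1\leftrightarrow 2$ with the arrow-type swap $A^E\leftrightarrow A^I$. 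The expected output is exactly the $38$ diagrams of Figure~\ref{fig:2NCNUEIV2}.

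For the ODE-class partition, by Proposition~\ref{L:3.8} there are exactly four ODE-classes of such networks, with minimal representatives NH1, NH2, NH3, NH4 of Figure~\ref{fig:ODE_2NCNUEIV2}. The class of any given network is determined by the real linear span of its four adjacency matrices from \eqref{eq:gen_adj_UEI}, by Remark~\ref{rem:ODEequiv}(ii). Since $A_1$ and $A_2$ absorb the diagonal entries of $A_3,A_4$, this span is determined by the two off-diagonal matrices
\[
B=\begin{pmatrix}0&\beta_2\\\beta_1&0\end{pmatrix},\qquad C=\begin{pmatrix}0&\gamma_2\\\gamma_1&0\end{pmatrix}.
\]
If $\beta_1\gamma_2-\beta_2\gamma_1\neq 0$ the span is $4$-dimensional and the network is ODE-equivalent to NH2; otherwise $B$ and $C$ are proportional, the span is $3$-dimensional, and the class is read off from the nonzero off-diagonal pair $(a,b)$ spanning the common direction after dividing out the $\gcd$: NH1 if $ab=0$, NH3 if $a=b$, and NH4 if $\{a,b\}=\{1,2\}$ (up to duality, which swaps $a$ and $b$). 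Running this check through each of the $53$ diagrams reproduces the counts for the REI networks claimed in Proposition~\ref{prop:REI2} and yields the partition of the $38$ non-REI diagrams stated in Table~\ref{table:UEI+ODE}.

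The main obstacle is the enumeration bookkeeping: guaranteeing completeness without double-counting under the duality involution. Organising the case split by the pair $(\delta_1+\alpha_2,\ \beta_2+\gamma_1)$, which measures the failure of the REI conditions, and within each case by the $A^E$-versus-$A^I$ choice at each of the four potential arrow positions (self-loop at $1$, arrow $1\to 2$, arrow $2\to 1$, self-loop at $2$), should make the enumeration systematic and reproducible. The subsequent ODE-class assignment is then a routine rank-and-proportionality check on the matrices $B$ and $C$ above.
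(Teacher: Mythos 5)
Your overall architecture is the same as the paper's: invoke Proposition~\ref{prop:REI2} for the $15$ REI networks, enumerate the remaining diagrams, and assign ODE-classes by computing spans of adjacency matrices. Your class-assignment criterion (pass to the off-diagonal matrices $B,C$; a $4$-dimensional span gives NH2, otherwise read off the primitive off-diagonal vector, with $(0,1)$ giving NH1, $(1,1)$ giving NH3 and $(1,2)$ giving NH4) is correct and is a welcome explicit version of the paper's bare appeal to \cite{DS05}; it reproduces Table~\ref{table:UEI+ODE} on spot checks.

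The gap is in the enumeration. You quotient only by the single involution ``renumber $1\leftrightarrow 2$ and dualise'', but the list of $38$ diagrams in Figure~\ref{fig:2NCNUEIV2} (hence the count $53$) is taken modulo a strictly coarser equivalence: networks are also identified when they differ by interchanging the two \emph{arrow}-type classes independently of the node-type classes. (Convention (b)'s ``interchange \dots on nodes and arrows'' is ambiguous, but the figure only makes sense with independent relabellings.) Concretely, the network consisting of a single inhibitory arrow from excitatory node $1$ to inhibitory node $2$ satisfies your non-REI condition ($\gamma_1=1$) and is not identified with network $(a)$ by your involution, so your procedure lists it as a new diagram; the paper identifies it with $(a)$ by relabelling the arrow-types alone, which is why Figure~\ref{fig:2NCNUEIV2} contains no $(a.1)$, no $(b.2)$, etc. A Burnside count quantifies the discrepancy: there are $189$ connected labelled diagrams with node $1$ excitatory and valence $\leq 2$; your order-$2$ involution has $9$ connected fixed points, giving $(189+9)/2=99$ classes ($15$ REI and $84$ non-REI), whereas the order-$4$ group generated by your involution together with the arrow-type swap has fixed-point counts $189$, $5$, $9$, $9$, giving $(189+5+9+9)/4=53$, i.e.\ exactly the $15+38$ of the statement. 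So as written your enumeration produces $84$ non-REI diagrams, not $38$, and the stated count is not proved. The repair is simply to enlarge the symmetry group by the arrow-type swap (with node labels and node types held fixed) before quotienting; your ODE-class assignment is insensitive to this relabelling (it permutes $B$ and $C$), so the rest of the argument goes through unchanged.
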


\begin{proof}
By Definition~\ref{def:EIN}, REI networks are 
UEI networks. Thus, we start by considering the $15$ connected $2$-node UEI networks with valence $\leq 2$ in Figure~\ref{fig:2NCNREIV2}, given by Proposition \ref{prop:REI2}. 
Next, from those $15$ networks we get the remaining UEI networks with valence $\leq 2$, up to renumbering of nodes and duality.
We maintain the assumption 
 that node $1$ is excitatory and node $2$ is inhibitory, but since we work with UEI networks we remove the 
node-type restriction on outputs. The result is Figure~\ref{fig:2NCNUEIV2}. 

Representatives of the minimal
ODE-classes of the 53 networks are in Figure~\ref{fig:ODE_2NCNUEIV2}. 
By Proposition~\ref{prop:REI2}, the set of $15$ networks in Figure~\ref{fig:2NCNREIV2}  is partitioned into the
ODE-classes NH1 and NH2.   By the results of \cite{DS05},
the set of $38$ networks in Figure~\ref{fig:2NCNUEIV2} is partitioned into 
the $4$ ODE-classes according to Table~\ref{table:UEI+ODE}. 
Table~\ref{table:ODE_2NCNUEIV2} states the corresponding admissible ODEs. 
\end{proof}

\begin{table}[!h]
\begin{tabular}{|ll|ll|}
\hline 
NH1 & $(b.1)-(i.4)$ 
& NH2 & $(k.1), (l.1), (m.3)-(m.5)$, \\
 &  & & $(n.2)-(n.3),(o.1)$
  \\
\hline
NH3
 & $(j)-(k), (k.2), $
&  
NH4
& $(l), (m)-(m.1)$\\
& $(n)-(n.1), (n.4), (o), (o.3)$ && \\
\hline
\end{tabular}
\vspace{.2cm}
\caption{Partition of the connected $2$-node UEI networks with valence $\leq 2$, listed in Figure~\ref{fig:2NCNUEIV2}, into the four 
network ODE-classes in Figure~\ref{fig:ODE_2NCNUEIV2}.}
\label{table:UEI+ODE}
\end{table}

\begin{cor} 
Only one  
ODE class of connected $2$-node UEI networks of valence $\leq 2$ has a minimal representative with two types of arrow, namely 
{\rm NH2}. This is also an REI network. 
\end{cor}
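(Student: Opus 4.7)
The plan is to appeal directly to Proposition~\ref{L:3.8}, which already identifies the four ODE-classes of connected 2-node UEI networks with valence $\leq 2$ via the minimal representatives NH1, NH2, NH3, NH4 shown in Figure~\ref{fig:ODE_2NCNUEIV2}. Since each ODE-class is represented by a minimal network (minimum number of arrows), and Remark~\ref{rem:ODEequiv}(iii) together with the discussion in Subsection~\ref{S:C2UEIODE} establishes that these four diagrams are the minimal representatives, it suffices to inspect each one and count the arrow-types that appear.

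First I would go through the list: NH1 consists of the single arrow from node $1$ to node $2$ of type $A^E$, so only one arrow-type appears; NH3 consists of one $A^E$-arrow in each direction between nodes $1$ and $2$, again only type $A^E$; NH4 is obtained from NH3 by adding a second $A^E$-arrow from $1$ to $2$, so once more only type $A^E$ occurs. By contrast, NH2 has one $A^E$-arrow from $1$ to $2$ and one $A^I$-arrow from $2$ to $1$, so both arrow-types appear. This shows that NH2 is the unique ODE-class in this list whose minimal representative uses two arrow-types.

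To complete the statement, I would verify that NH2 is in fact an REI network. Since node $1$ is of type $N^E$ and node $2$ is of type $N^I$ (by the conventions of Subsection~\ref{S:C2UEI}), and in NH2 the unique $A^E$-arrow has tail at node $1$ while the unique $A^I$-arrow has tail at node $2$, conditions (c) and (d) of Definition~\ref{def:EIN} are satisfied. Hence NH2 meets all four conditions (a)--(d) of Definition~\ref{def:EIN} and is an REI network, as claimed.

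There is no substantial obstacle here: the corollary is essentially a bookkeeping consequence of Proposition~\ref{L:3.8}, and the only point requiring any care is ensuring one uses the minimal representative in each ODE-class (since non-minimal representatives could introduce additional arrows, possibly of the other type, without changing the ODE-class). The minimality of the four networks in Figure~\ref{fig:ODE_2NCNUEIV2} is precisely what makes the arrow-type count well-defined for distinguishing ODE-classes in this way.
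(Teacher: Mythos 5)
Your argument is correct and matches what the paper intends: the corollary is stated without proof as an immediate consequence of Proposition~\ref{L:3.8}, and your inspection of the four minimal representatives in Figure~\ref{fig:ODE_2NCNUEIV2} (only NH2 uses both $A^E$ and $A^I$) together with the check of conditions (c) and (d) of Definition~\ref{def:EIN} for NH2 is exactly the intended bookkeeping.
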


\begin{rems}  \label{rmk:UEI_classes}
(i) There are $2$ ODE-classes of connected $2$-node UEI networks of valence $\leq 2$ which coincide with the 
$2$ ODE-classes of connected $2$-node REI networks of valence $\leq 2$. 

(ii) There are $2$ ODE-classes  of connected $2$-node UEI networks of valence $\leq 2$ which are not REI. Moreover, they have representatives with only one 
arrow-type.

\hfill $\Diamond$
\end{rems}

\subsection{Connected 2-node CEI Networks}
\label{S:C2CEI}

A $2$-node connected CEI network is the union of two subnetworks as in Figure~\ref{fig:generalUEI}, but considering the two nodes to be of the same type.
The arrow multiplicities are nonnegative integers $\alpha_i, \beta_i, \gamma_i, \delta_i$, where $i=1,2$ and at least one of the $\beta_1,\beta_2, \gamma_1, \gamma_2$ is nonzero,  see~Figure~\ref{fig:generalCEI}.

\begin{figure}
\begin{tabular}{ll}
\begin{tikzpicture}
 [scale=.15,auto=left, node distance=2cm
 ]
 \node[fill=white,style={circle,draw}] (n1) at (4,0) {\small{1}};
  \node[fill=white,style={circle,draw}] (n2) at (20,0) {\small{2}};
  \path
   (7,1)  [->] edge[thick] node {$\beta_1$}  (17,1)
 (n1)  [line width=1pt,->]  edge[loop left=90,thick] node {$\alpha_1$} (n1);
 \path 
 (17,-1)  [->] edge[thick] node {$\beta_2$}  (7,-1)
 (n2)  [->]  edge[loop right=90,thick] node {$\alpha_2$} (n2); 
 \end{tikzpicture}  & 
 \begin{tikzpicture}
 [scale=.15,auto=left, node distance=2cm
 ]
 \node[fill=white,style={circle,draw}] (n1) at (4,0) {\small{1}};
  \node[fill=white,style={circle,draw}] (n2) at (20,0) {\small{2}};
  \path
   (7,1)  [dashed,->] edge[thick] node {$\gamma_1$}  (17,1)
 (n1)  [line width=1pt,->]  edge[loop left=90,thick] node {$\delta_1$} (n1);
 \path 
 (17,-1)  [dashed, ->] edge[thick] node {$\gamma_2$}  (7,-1)
 (n2)  [->]  edge[loop right=90,thick] node {$\delta_2$} (n2); 
  \end{tikzpicture} 
 \end{tabular}
 \caption{A $2$-node CEI network has two $2$-node subnetworks.} \label{fig:generalCEI}
\end{figure}
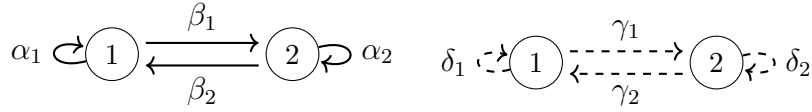

There is an analogous result to Proposition~\ref{lem:general_2_node_UEI} for UEI 
networks with the same proof.

\begin{prop} \label{lem:general_2_node_CEI} 
A $2$-node connected CEI network is the  network of Figure~{\rm \ref{fig:generalCEI}}, for some choice of nonnegative integer arrow multiplicities $\alpha_i, \beta_i, \gamma_i, \delta_i$, where $i=1,2$ and at least one of the $\beta_1,\beta_2, \gamma_1, \gamma_2$ is nonzero. All the statements of Proposition~{\rm \ref{lem:general_2_node_UEI}} hold, with the additional condition that  the two nodes have the same node-type.
When the multiplicities satisfy $\alpha_1 + \beta_2 = \alpha_2 + \beta_1$ and $\delta_1 + \gamma_2 = \gamma_1 + \delta_2$,  then the network is homogeneous and {\rm (\ref{eq:general2UEI})} holds with $f=g$.
\end{prop}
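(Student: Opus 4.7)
The plan is to adapt the proof of Proposition~\ref{lem:general_2_node_UEI} to the CEI setting, since the proposition explicitly says that all statements of that earlier proposition carry over under the identification $N^E = N^I$. First, I would note that a CEI network has a single node-type, so the two node-type adjacency matrices $A_1, A_2$ from \eqref{eq:gen_adj_UEI} collapse to the identity $\id_2$. Conditions (c) and (d) of Definition~\ref{def:EIN} still fail to apply, so the arrow-type adjacency matrices $A_3$ (for $A^E$) and $A_4$ (for $A^I$) take exactly the general forms in \eqref{eq:gen_adj_UEI}, and the connectedness requirement that at least one of $\beta_1, \beta_2, \gamma_1, \gamma_2$ be nonzero is unchanged. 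Splitting by arrow-type yields the two subnetworks depicted in Figure~\ref{fig:generalCEI}.

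Next, I would observe that the shape of admissible ODEs and of their linearization at $(x_1, x_2)$ depends only on the multiset of input arrows at each node together with the node state spaces; it does not depend on whether two distinct nodes are labelled with the same or different node-types. Hence the formulas \eqref{eq:general2UEI}, \eqref{eq:notation_UEI}, and \eqref{eq:lin_UEI} transfer verbatim to the CEI case, with the superscripts $\pm$ now serving only to mark excitatory versus inhibitory input slots of $f$ and $g$. This settles the first assertion.

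For the homogeneity claim, I would invoke Definition~\ref{Def:input_equiv}. Since both nodes share the unique node-type, input equivalence reduces to the two cardinality equalities $\#I^E(1) = \#I^E(2)$ and $\#I^I(1) = \#I^I(2)$. Counting arrows into each node directly from Figure~\ref{fig:generalCEI} gives $\#I^E(1) = \alpha_1 + \beta_2$, $\#I^E(2) = \beta_1 + \alpha_2$, $\#I^I(1) = \delta_1 + \gamma_2$, and $\#I^I(2) = \gamma_1 + \delta_2$. So input equivalence holds precisely under the stated multiplicity conditions $\alpha_1 + \beta_2 = \beta_1 + \alpha_2$ and $\delta_1 + \gamma_2 = \gamma_1 + \delta_2$. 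When these hold, both nodes lie in a single input class, and the admissibility requirement that $f_i = f_j$ across an input class forces $f = g$ in \eqref{eq:general2UEI}.

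The argument is essentially bookkeeping on top of Proposition~\ref{lem:general_2_node_UEI}; the only subtle point, which I expect to be the main obstacle worth stating carefully, is the homogeneity count, where one must remember that in the CEI case an excitatory arrow from node~$2$ contributes to $\#I^E(1)$ even though node~$2$ would not be permitted to emit it in the REI case.
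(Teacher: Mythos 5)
Your proposal is correct and matches the paper's (implicit) argument: the paper states this result with no separate proof, remarking only that it follows from Proposition~\ref{lem:general_2_node_UEI} ``with the same proof'' once the two node-types are identified, which is precisely the adaptation you carry out. Your explicit verification of the homogeneity condition via the row sums $\#I^E(1)=\alpha_1+\beta_2$, $\#I^E(2)=\beta_1+\alpha_2$, $\#I^I(1)=\delta_1+\gamma_2$, $\#I^I(2)=\gamma_1+\delta_2$ correctly fills in the bookkeeping the paper leaves to the reader.
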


\subsection{Connected 2-node CEI Networks: ODE-classes}
\label{S:C2CEINODE}
For a 2-node CEI network there is no restriction on the tail node-type for $A^E$ arrows and $A^I$ arrows, and the two nodes have the same type. The adjacency matrices are therefore
$$
\begin{array}{ll} 
\mbox{Node-type $N^E=N^I$: } 
\id_2; &   \\ 
\ \\
\mbox{Arrow-type $A^E$: } 
A_3 = 
\left[
\begin{array}{cc}
\alpha_1 & \beta_2 \\
\beta_1 & \alpha_2 
\end{array}
\right];  & 
\mbox{Arrow-type $A^I$: } 
A_4= 
\left[
\begin{array}{cc}
\delta_1& \gamma_2 \\
\gamma_1 & \delta_2
\end{array}
\right];
\end{array}
$$
where at least one of the $\beta_i$ or $\gamma_i$ is nonzero to guarantee connectedness.

Following Definition 4.2 in \cite{AD07}, given a network $G$ and the corresponding ODE-class $[G]$, we 
 denote by $m[G]$ the minimal number of edges for the networks in  $[G]$.
Following Definition  5.10 in \cite{AD07}, given a matrix $M = [m_{ij}]_{1 \le i,j \le n} \in  M_{n×n}(\R)$, let
$l(M) =\sum_{i=1}^n \sum_{j=1}^n m_{ij}$.
\cite[Proposition 5.11]{AD07} implies:

\begin{prop} \label{prop:general_CEI2}
There is an infinity of ODE-classes of connected $2$-node CEI networks. 
Moreover, given a $2$-node CEI network $G$ with arrow adjacency matrices $A_3$ and $A_4$, let 
$$m = \dim \langle\id_2, A_3, A_4\rangle - 1\, .$$
Then a minimal EI network ODE-equivalent to $G$ has arrow adjacency matrices $M_1, M_2$
such that: 

{\rm (i)} $\{\id_2,M_1, M_2\}$ is a basis of the real vector space $\langle\id_2, A_3, A_4\rangle$.

{\rm (ii)} $\sum_{k=1}^m l(M_k) = m[G]$.
\end{prop}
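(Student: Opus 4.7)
The plan is to establish the two assertions separately, with both reducing to linear-algebra characterizations of ODE-equivalence already in the literature.

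For the first assertion (infinitely many ODE-classes), I would mimic the argument used for the PEI case in Proposition~\ref{prop:PEI_ODE}. Since every 2-node CEI network shares the same node-type adjacency matrix $\id_2$, two connected 2-node CEI networks are ODE-equivalent if and only if their spans $\langle \id_2, A_3, A_4\rangle$ coincide, by Remark~\ref{rem:ODEequiv}(ii). It then suffices to exhibit an infinite family of such spans. Consider the CEI networks with $A_4 = 0$ and
\[
A_3 \;=\; \begin{pmatrix} 0 & \beta_2 \\ \beta_1 & 0 \end{pmatrix}, \qquad \beta_1,\beta_2 \ge 1 \text{ coprime integers.}
\]
Distinct coprime pairs $(\beta_1,\beta_2)$ yield distinct real lines $\langle A_3\rangle$ in the space of off-diagonal matrices, hence distinct 3-dimensional spans $\langle \id_2, A_3, 0\rangle$, producing infinitely many ODE-classes.

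For the second assertion, the key input is \cite[Proposition 5.11]{AD07}, applied to the network $G$ with its full list of adjacency matrices $\id_2, A_3, A_4$. That result asserts that, given any finite collection of nonnegative integer matrices spanning a real subspace $V$, one can find nonnegative integer matrices forming a basis of $V$ whose total entry count is minimal among all such spanning collections, and that this minimum equals the minimum number of arrows among networks ODE-equivalent to the given one. Applying this with $V = \langle \id_2, A_3, A_4\rangle$ and observing that $\id_2$ must be in the basis (it is the sole node-type matrix, and is independent of the arrow-type matrices $M_k$ since the latter have zero diagonal after subtracting off appropriate multiples of $\id_2$, or are linearly independent from $\id_2$ otherwise), yields arrow-type matrices $M_1,\ldots,M_m$ satisfying (i) and (ii). Here $m = \dim V - 1 \in \{1,2\}$, reflecting whether the minimal ODE-equivalent CEI network uses one or two arrow-types.

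The main technical issue to address is ensuring that the basis produced by \cite[Proposition 5.11]{AD07} can indeed be realized within the CEI category. This amounts to checking two points: first, that the minimizing basis elements $M_k$ can be chosen with nonnegative integer entries (so that they correspond to actual arrow multiplicities); and second, that labelling them as arrow-types $A^E$ and $A^I$ respects the CEI definition. The first point is precisely the content of \cite[Proposition 5.11]{AD07}, whose proof operates on the cone of nonnegative integer matrices. The second is automatic because CEI networks impose no restrictions linking node-type to arrow-type, so any nonnegative integer matrix is admissible as either $A^E$- or $A^I$-adjacency matrix. This latter observation is what distinguishes the CEI setting from the REI setting of Remark~\ref{rem:ODEequiv}(iv), where the block structure forced by conditions (c) and (d) of Definition~\ref{def:EIN} must be preserved under the minimization.
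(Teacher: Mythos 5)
Your proposal matches the paper's treatment: the paper states this proposition with no written proof, deriving it directly from \cite[Proposition 5.11]{AD07} exactly as you do for the minimality claims (i)--(ii), while the infinitude of ODE-classes follows from the same coprime-pair family NH$\beta_1\beta_2$ already used in Propositions~\ref{prop:PEI_ODE} and~\ref{prop:UEI_ODE_classes}. One trivial slip: the span $\langle \id_2, A_3, 0\rangle$ in your infinite family is $2$-dimensional, not $3$-dimensional, but distinct coprime ratios still yield distinct $2$-dimensional subspaces (a common $2$-plane containing $\id_2$ and two independent off-diagonal lines would have to be $3$-dimensional), so the conclusion stands.
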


\subsection{Connected 2-node CEI Networks with valence up to 2}
\label{S:C2CEINV2}

\begin{prop} \label{prop:CEI2}
The set of $2$-node connected CEI networks with node input valence up to two contains 
$53$ networks, which correspond to the $15$ UEI networks in Figure~{\rm \ref{fig:2NCNREIV2}} and the $38$ UEI networks in Figure~{\rm \ref{fig:2NCNUEIV2}}, given by Proposition~\ref{prop:UEI2}, 
 but assuming the nodes to be of the same type. 

Moreover, it is partitioned into 
$21$ ODE-classes. The $15$ CEI networks in Figure~{\rm \ref{fig:2NCNREIV2}} are PEI networks and are partitioned into
$9$ ODE-classes: $7$ classes
formed by inhomogeneous networks and $2$ by homogeneous networks, see Figure~{\rm \ref{fig:ODE_2NCNPEIV2}}.
The $38$ CEI networks in Figure~{\rm \ref{fig:2NCNUEIV2}} are partitioned into 
$15$ ODE-classes, $3$ of them PEI ODE-classes: 
 $11$
 classes formed by inhomogeneous networks and $4$ by homogeneous networks, see Figure~{\rm \ref{fig:ODE_2NCNCEIV2}}. 
The  corresponding admissible maps appear in Table~{\rm \ref{table:ODE_2NCNCEIV2}}.
\end{prop}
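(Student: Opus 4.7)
The plan is to proceed in three stages, building on the UEI and PEI classifications already obtained in Propositions~\ref{prop:UEI2} and~\ref{prop:PEI2}.

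First I will use the fact that a $2$-node CEI network differs from a UEI network only in the identification of the node-types $N^E$ and $N^I$, with the arrow data unchanged. Hence every connected $2$-node CEI network with valence $\leq 2$ corresponds, as a labelled diagram, to a unique connected $2$-node UEI network with valence $\leq 2$, and Proposition~\ref{prop:UEI2} yields the total of $15 + 38 = 53$ such networks: those in Figure~\ref{fig:2NCNREIV2}, which satisfy the PEI condition since each node outputs arrows of only one type, together with those in Figure~\ref{fig:2NCNUEIV2}, in which some node outputs arrows of both types. For the 15 PEI networks, Proposition~\ref{prop:PEI2} immediately gives the partition into 9 ODE-classes (7 inhomogeneous NH1--NH7 and 2 homogeneous H1, H2) displayed in Figure~\ref{fig:ODE_2NCNPEIV2}.

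For the 38 networks in Figure~\ref{fig:2NCNUEIV2} I will apply the Dias--Stewart criterion of Remark~\ref{rem:ODEequiv}(ii). Under the CEI convention the node-type adjacency data collapses to the single matrix $\id_2$, so two CEI networks are ODE-equivalent if and only if the triples $\{\id_2, A_3, A_4\}$ span the same subspace of $M_{2\times 2}(\R)$, where $A_3, A_4$ are the two arrow-type adjacency matrices. For each of the 38 diagrams I will read off $A_3, A_4$ from Figure~\ref{fig:2NCNUEIV2} and group the networks by the span $\langle \id_2, A_3, A_4\rangle$, up to the duality $A^E \leftrightarrow A^I$ and renumbering of nodes. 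Minimal representatives will then be selected using Proposition~\ref{prop:general_CEI2}, and a class will be recognised as homogeneous exactly when some representative satisfies the balancing conditions $\alpha_1+\beta_2=\alpha_2+\beta_1$ and $\delta_1+\gamma_2=\gamma_1+\delta_2$ of Proposition~\ref{lem:general_2_node_CEI}. The expected outcome is 15 ODE-classes, 11 inhomogeneous and 4 homogeneous, collected in Figure~\ref{fig:ODE_2NCNCEIV2}, with admissible ODEs in Table~\ref{table:ODE_2NCNCEIV2} obtained by specialising~\eqref{eq:general2UEI} to the minimal representatives.

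The final count of 21 is then $9 + 15 - 3$: three of the 15 ODE-classes produced from the 38 diagrams will contain a PEI representative and therefore coincide with three of the nine classes from the PEI stage; these are precisely the ODE-classes whose span $\langle\id_2, A_3, A_4\rangle$ is also realised by a PEI pair of arrow-type matrices. The main obstacle is the bookkeeping in the third step: although each ODE-equivalence check is a routine linear-algebra comparison, the check must be performed consistently across all 38 diagrams while accounting for duality and node relabelling, and the overlap with the PEI classes must be identified without double counting. The delicate point is recognising when the presence of $\id_2$ in the spanning set causes $A_3, A_4$ of a genuinely non-PEI diagram to generate the same subspace as a pair arising from a PEI diagram --- this is exactly what produces the three overlapping ODE-classes responsible for the discrepancy between $9+15$ and $21$.
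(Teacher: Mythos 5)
Your proposal follows essentially the same route as the paper: obtain the 53 networks by identifying node-types in the UEI list, invoke Proposition~\ref{prop:PEI2} for the 15 PEI networks, partition the remaining 38 via the Dias--Stewart criterion applied to the spans $\langle \id_2, A_3, A_4\rangle$ up to duality and renumbering, and arrive at $21 = 9 + 15 - 3$ by identifying the three classes with PEI minimal representatives that already occur among the nine. The only difference is that the paper records the outcome of the bookkeeping step explicitly (listing which of the 38 diagrams fall into each of the 15 classes), which is exactly the computation your plan defers.
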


\begin{proof}
Trivially, the 2-node connected CEI networks with node input valence up to 2 can be obtained from the 2-node connected UEI networks with node input valence up to 2, 
see Figures~{\rm \ref{fig:2NCNREIV2}} and \ref{fig:2NCNUEIV2}, by considering the nodes to have the same type. 
The $15$ CEI networks in Figure~{\rm \ref{fig:2NCNREIV2}} are PEI networks.  By Proposition~\ref{prop:PEI2}, they are partitioned into
$9$ ODE-classes: $7$ classes formed by inhomogeneous networks and $2$ by homogeneous networks, see Figure~{\rm \ref{fig:ODE_2NCNPEIV2}}.

By the result in~\cite{DS05} on networks ODE-equivalence, 
the $38$ CEI networks in Figure~{\rm \ref{fig:2NCNUEIV2}}
are partioned into
$15$ ODE-classes of CEI networks, where 3 of them are PEI ODE-classes: 
one class containing  the networks $(b.1), (g), (g.2)$, one class with networks $(d.1), (e.2), (f.1), (f.2), (h.1), (h.2), (i.1), (i.3),$ $(e.1), (h.3)$, one class with networks $(g.1), (i.2)$, one class with networks $(h.4), (i), (i.4)$, one class with networks $(j), (n), (n.4), (o), (o.3)$, one class with networks $(k.1), (m.3), (m.4)$, one with networks $(l.1), (n.2), (o.1)$,
and one with $(k.2)$, $(n.1)$.
Each of the remaining networks, $(c)$, $(k)$, $(l)$, $(m)$, $(m.1)$, $(m.5)$, $(n.3)$, represents a different ODE-class. 
See Figure~\ref{fig:ODE_2NCNCEIV2} for minimal ODE-class representatives and Table~\ref{table:ODE_2NCNCEIV2} for the admissible maps.
\end{proof}

\begin{figure}[!ht]
\tiny{
\begin{center}
\begin{tabular}{llll} 
 NH1 \begin{tikzpicture}
 [scale=.15,auto=left, node distance=1.5cm, 
 ]
 \node[fill=white,style={circle,draw}] (n1) at (4,0) {\small{1}};
  \node[fill=white,style={circle,draw}] (n2) at (14,0) {\small{2}};
\path
        (n1) [->,solid, thick]  edge[thick] node { } (n2); 
 \end{tikzpicture}   
&
 NH2 \begin{tikzpicture}
 [scale=.15,auto=left, node distance=1.5cm, 
 ]
 \node[fill=white,style={circle,draw}] (n1) at (4,0) {\small{1}};
  \node[fill=white,style={circle,draw}] (n2) at (14,0) {\small{2}};
\path
        (n1) [->,solid, thick]  edge[thick] node { } (n2);
\path        
        (n2) [->,thick]  edge[loop right=90,thick] node { } (n2);
 \end{tikzpicture} 
 &
 NH3 \begin{tikzpicture}
 [scale=.15,auto=left, node distance=1.5cm, 
 ]
 \node[fill=white,style={circle,draw}] (n1) at (4,0) {\small{1}};
  \node[fill=white,style={circle,draw}] (n2) at (14,0) {\small{2}};
\path
        (n1) [->,thick]  edge[thick] node { } (n2);
 \path 
        (n1) [->,thick,dashed]  edge[loop left=90,thick] node { } (n1);
 \end{tikzpicture} 
& 
NH5 \begin{tikzpicture}
 [scale=.15,auto=left, node distance=1.5cm, 
 ]
 \node[fill=white,style={circle,draw}] (n1) at (4,0) {\small{1}};
  \node[fill=white,style={circle,draw}] (n2) at (14,0) {\small{2}};
\path
        (n1) [->,thick]  edge[thick] node { } (n2);
 \path 
         (n1) [->,dashed]  edge[loop left=90, thick] node { } (n1);
\path 
        (n2) [->,thick]  edge[loop right=90, thick] node { } (n2);
 \end{tikzpicture}\\
\\
 NH8 \begin{tikzpicture}
 [scale=.15,auto=left, node distance=1.5cm, 
 ]
 \node[fill=white,style={circle,draw}] (n1) at (4,0) {\small{1}};
  \node[fill=white,style={circle,draw}] (n2) at (14,0) {\small{2}};
\path
        (n1)  [->]  edge[loop left=90,thick] node {} (n1)
        (n1) [->,thick]  edge[bend right=10, thick] node { } (n2);
\path 
        (n2) [->,thick]  edge[bend left=-10] node { } (n1);
 \end{tikzpicture}
&
 NH9 \begin{tikzpicture}
 [scale=.15,auto=left, node distance=1.5cm, 
 ]
 \node[fill=white,style={circle,draw}] (n1) at (4,0) {\small{1}};
  \node[fill=white,style={circle,draw}] (n2) at (14,0) {\small{2}};
\path
        (n1)  [->]  edge[loop left=90,thick] node {} (n1);
 \path 
        (n1) [->,dashed]  edge[bend right=10, thick] node { } (n2);
\path 
        (n2) [->,thick]  edge[bend left=-10] node { } (n1);
 \end{tikzpicture} 
&
NH10 \begin{tikzpicture}
 [scale=.15,auto=left, node distance=1.5cm, 
 ]
 \node[fill=white,style={circle,draw}] (n1) at (4,0) {\small{1}};
  \node[fill=white,style={circle,draw}] (n2) at (14,0) {\small{2}};
\path
        (n1)  [->,dashed]  edge[loop left=90,thick] node {} (n1);
 \path 
        (n1) [->,thick]  edge[bend right=10, thick] node { } (n2);
\path 
        (n2) [->,thick]  edge[bend left=-10] node { } (n1);
 \end{tikzpicture}
&
NH11 \begin{tikzpicture}
 [scale=.15,auto=left, node distance=1.5cm, 
 ]
 \node[fill=white,style={circle,draw}] (n1) at (4,0) {\small{1}};
  \node[fill=white,style={circle,draw}] (n2) at (14,0) {\small{2}};
\path
         (n1) [->,thick]  edge[bend right=10, thick] node { } (n2);
\path 
        (n2) [->,thick]  edge[bend left =-10] node { } (n1)
        (n2) [->,thick]  edge[bend left=-30] node { } (n1);
 \end{tikzpicture} \\
\\
NH12 \begin{tikzpicture}
 [scale=.15,auto=left, node distance=1.5cm, 
 ]
 \node[fill=white,style={circle,draw}] (n1) at (4,0) {\small{1}};
  \node[fill=white,style={circle,draw}] (n2) at (14,0) {\small{2}};
\path
         (n1) [->,thick]  edge[bend right=10, thick] node { } (n2);
\path 
        (n2) [->,thick, dashed]  edge[bend left =-10] node { } (n1);
 \end{tikzpicture}  
 &
NH14   \begin{tikzpicture}
 [scale=.15,auto=left, node distance=1.5cm, 
 ]
 \node[fill=white,style={circle,draw}] (n1) at (4,0) {\small{1}};
  \node[fill=white,style={circle,draw}] (n2) at (14,0) {\small{2}};
\path
        (n1)  [->,dashed]  edge[loop left=90,thick] node {} (n1);
\path 
        (n1) [->,thick]  edge[bend right=10, thick] node { } (n2)
         (n1) [->,thick]  edge[bend right=40, thick] node { } (n2);
\path 
        (n2) [->,thick]  edge[bend left=-10, thick] node { } (n1);      
 \end{tikzpicture}
 &
 NH17  \begin{tikzpicture}
 [scale=.15,auto=left, node distance=1.5cm, 
 ]
 \node[fill=white,style={circle,draw}] (n1) at (4,0) {\small{1}};
  \node[fill=white,style={circle,draw}] (n2) at (14,0) {\small{2}};
\path
        (n1)  [->]  edge[loop left=90,thick] node {} (n1);
 \path 
         (n1) [->,dashed]  edge[bend right=10] node { } (n2);
\path          
         (n2)  [->, dashed]  edge[loop right=90,thick] node {} (n2);
 \path        
        (n2) [->,thick]  edge[bend left=-10] node { } (n1);
 \end{tikzpicture}  
 & 
 \\
\\
 H1 
  \begin{tikzpicture}
 [scale=.15,auto=left, node distance=1.5cm, 
 ]
 \node[fill=white,style={circle,draw}] (n1) at (4,0) {\small{1}};
  \node[fill=white,style={circle,draw}] (n2) at (14,0) {\small{2}};
 \path 
         (n1) [->,thick]  edge[loop left=90, thick] node { } (n1);
\path
        (n1) [->,solid, thick]  edge[thick] node { } (n2);
 \end{tikzpicture}  
&
H2 \begin{tikzpicture}
 [scale=.15,auto=left, node distance=1.5cm, 
 ]
 \node[fill=white,style={circle,draw}] (n1) at (4,0) {\small{1}};
  \node[fill=white,style={circle,draw}] (n2) at (14,0) {\small{2}};
\path
        (n1) [->,thick]  edge[bend right=10] node { } (n2);
 \path 
        (n2) [->,thick]  edge[bend left=-10] node { } (n1);
 \end{tikzpicture}
&
 H3   \begin{tikzpicture}
 [scale=.15,auto=left, node distance=1.5cm, 
 ]
 \node[fill=white,style={circle,draw}] (n1) at (4,0) {\small{1}};
  \node[fill=white,style={circle,draw}] (n2) at (14,0) {\small{2}};
\path
        (n1)  [->]  edge[loop left=90,thick] node {} (n1)
        (n1) [->,thick]  edge[bend right=10, thick] node { } (n2)
         (n1) [->,thick]  edge[bend right=40, thick] node { } (n2);
\path 
        (n2) [->,thick]  edge[bend left=-10, thick] node { } (n1);      
 \end{tikzpicture}
&
 H4   \begin{tikzpicture}
 [scale=.15,auto=left, node distance=1.5cm, 
 ]
 \node[fill=white,style={circle,draw}] (n1) at (4,0) {\small{1}};
  \node[fill=white,style={circle,draw}] (n2) at (14,0) {\small{2}};
\path
        (n1)  [->,dashed]  edge[loop left=90,thick] node {} (n1);
 \path 
        (n1) [->,thick]  edge[bend right=10, thick] node { } (n2);
 \path 
         (n1) [->,dashed]  edge[bend right=40, thick] node { } (n2);
\path 
        (n2) [->,thick]  edge[bend left=-10, thick] node { } (n1);      
 \end{tikzpicture} 
\end{tabular}
\end{center}
\caption{
Minimal representatives of the $15$
 ODE-classes of the $38$ 
 $2$-node connected CEI networks with input valence up to two in Figure~{\rm \ref{fig:2NCNUEIV2}} (assuming the nodes to be of the same type): 
 $11$ inhomogeneous and 4 homogeneous. 
See Table~\ref{table:ODE_2NCNCEIV2} for the corresponding admissible maps. Representatives NH1, H1, and NH12 are PEI networks.}
\label{fig:ODE_2NCNCEIV2}
}
\end{figure}
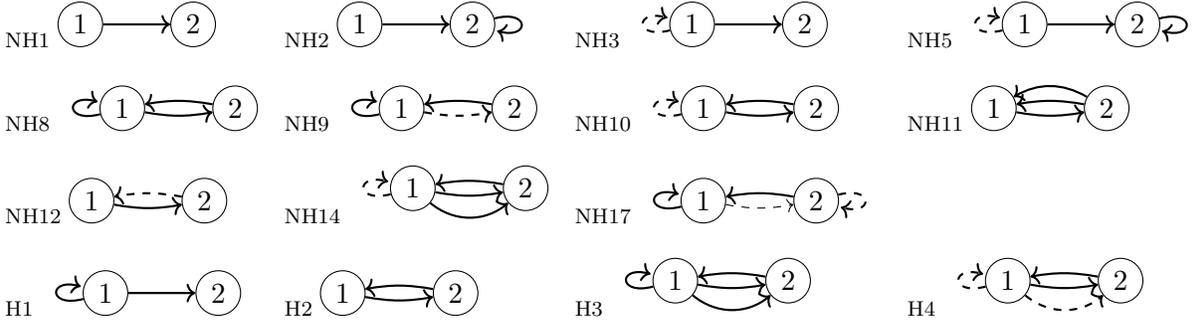

\begin{table}
\begin{center}
{\tiny 
\begin{tabular}{|l|l|l|}
\hline 
 & &  \\
NH1 \quad $
\begin{array}{l}
\dot{x}_1 = f(x_1) \\
\dot{x}_2 = g(x_2; x^+_1)
\end{array}
$ 
& 
NH2 \quad$
\begin{array}{l}
\dot{x}_1 = f(x_1) \\
\dot{x}_2 = g(x_2; \overline{x^+_1,x^+_2})
\end{array}
$  
& 
NH3 \quad$
\begin{array}{l}
\dot{x}_1 = f(x_1; x^-_1) \\
\dot{x}_2 = g(x_2; x^+_1)
\end{array}
$  
\\
 && \\
\hline 
 && \\
NH5 \quad$
\begin{array}{l}
\dot{x}_1 = f(x_1;x^-_1) \\
\dot{x}_2 = g(x_2; \overline{x^+_1,x^+_2})
\end{array}
$  
 &
NH8\quad
$
\begin{array}{l}
\dot{x}_1 = f(x_1; \overline{x^+_1,x^+_2}) \\
\dot{x}_2 = g(x_2; x^+_1)
\end{array}
$ 
& 
NH9\quad
$ 
\begin{array}{l}
\dot{x}_1 = f(x_1; \overline{x^+_1,x^+_2}) \\
\dot{x}_2 = g(x_2; x^-_1)
\end{array}
$ 
\\
 && \\
\hline 
 && \\
NH10\quad
$
\begin{array}{l}
\dot{x}_1 = f(x_1; x^-_1,x^+_2) \\
\dot{x}_2 = g(x_2; x^+_1)
\end{array}
$ 
& 
NH11\quad
$ 
\begin{array}{l}
\dot{x}_1 = f(x_1; \overline{x^+_2,x^+_2}) \\
\dot{x}_2 = g(x_2; x^+_1)
\end{array}
$ &
NH12\quad
$
\begin{array}{l}
\dot{x}_1 = f(x_1; x^-_2) \\
\dot{x}_2 = g(x_2; x^+_1)
\end{array}
$ 
\\
&& \\
\hline
&& \\
NH14\quad
$ 
\begin{array}{l}
\dot{x}_1 = f(x_1; x^-_1,x^+_2) \\
\dot{x}_2 = g(x_2; x^+_1, x^+_1)
\end{array}
$ 
& 
NH17\quad
$ 
\begin{array}{l}
\dot{x}_1 = f(x_1; \overline{x^+_1,x^+_2}) \\
\dot{x}_2 = g(x_2; \overline{x^-_1,x^-_2})
\end{array}
$
&
H1 \quad $
\begin{array}{l}
\dot{x}_1 = f(x_1, x^+_1) \\
\dot{x}_2 = f(x_2; x^+_1)
\end{array}
$ \\
&& \\
\hline
&& \\
 H2 \quad$
\begin{array}{l}
\dot{x}_1 = f(x_1,x^+_2) \\
\dot{x}_2 = f(x_2; x^+_1)
\end{array}
$  
& 
H3 \quad$
\begin{array}{l}
\dot{x}_1 = f(x_1;  \overline{x^+_1,x^+_2}) \\
\dot{x}_2 = f(x_2;  \overline{x^+_1,x^+_1})
\end{array}
$  
& 
H4 \quad$
\begin{array}{l}
\dot{x}_1 = f(x_1; x^-_1,x^+_2) \\
\dot{x}_2 = f(x_2; x^+_1,x^-_1)
\end{array}
$  
\\
 && \\
\hline
\end{tabular}
}
\vspace{.2cm}
\caption{
Admissible maps for the networks in Figure~\ref{fig:ODE_2NCNCEIV2}.}
\label{table:ODE_2NCNCEIV2}
\end{center}
\end{table}

\section{Conclusions}

This work is motivated by the importance of biological networks in science. Commonly,  in these networks, a distinction is made  on the type of connections (excitatory and inhibitory)   and on the type of nodes  (activator and repressor).  We make a general study of 2-node excitatory inhibitory networks (EI) with two types of connections and some more conditions.  More precisely, we formalize the structure of EI networks as a preparation of a systematic analysis of dynamics and bifurcations in such networks. Moreover, our results  are extended to the 3-node case in \cite{ADS3node}. We consider the network formalism where nodes and connections are partitioned into several types and where the dynamics of the networks respects these and the network topology.  In this paper we classify four classes of 2-node EI networks -- REI, UEI, PEI, CEI -- all with two types of connections (excitatory and inhibitory):  for the REI network class, a node cannot output both types of connections whereas for the UEI class a node can output both; moreover, for both REI and UEI networks, there are two node-types (activators and repressors); when all nodes are assumed to be of the same type, then we have the PEI and CEI network classes: if a node cannot output both types of connections the network is PEI, otherwise,  the network is CEI. The number of networks of every such network class is not finite. Trivially, restricting the network valence, then each of the four classes has a finite number of networks. 

Remarkably, considering the classification up to ODE-equivalence, we obtain that there are only 2 ODE-classes of REI networks while for 
the other three network classes, there are infinite number of ODE-classes.  

Restricting to networks of valency $\leq2$ then the four network classes are formed by  finite number of networks.  We obtain that the classification 
of CEI (resp. PEI) networks  is derived  from the classification of the UEI (resp. REI) networks by assuming the two nodes are of the same type, however,  for CEI (resp. PEI) networks there are 21 (resp. 9) ODE-classes and for the UEI (resp. REI) networks there are only 4 (resp. 2) ODE-classes. 

\vspace{5mm}

\noindent {\bf Acknowledgments} \\
MA and AD were partially supported by CMUP, member of LASI, which is financed by national funds through FCT -- Funda\c c\~ao para a Ci\^encia e a Tecnologia, I.P., under the projects with reference UIDB/00144/2020 and UIDP/00144/2020.

\vspace{5mm}

\end{document}